\titleformat{\section}{\normalsize\bfseries}{\thesection}{1em}{}
\titleformat{\subsection}{\normalsize\bfseries}{\thesubsection}{1em}{}
\numberwithin{equation}{subsection}
\theoremstyle{plain}
\theoremstyle{definition}
\theoremstyle{definition}
\newtheorem{defn}[subsection]{Definition}
\newtheorem{para}[subsection]{}
\newtheorem{egg}[subsection]{Example}
\newtheorem{rmk}[subsection]{Remark}
\newtheorem*{assumption*}{Assumption}
\theoremstyle{plain}
\newtheorem{prop}[subsection]{Proposition}
\newtheorem{theo}[subsection]{Theorem}
\newtheorem{lem}[subsection]{Lemma}
\newtheorem{cor}[subsection]{Corollary}
\newtheorem*{claim*}{Claim}
\newtheorem*{just*}{Justification}
\newtheorem*{lem*}{Lemma}
\newtheorem*{prop*}{Proposition}
\newcommand{\J}{\mathscr{J}}
\newcommand{\V}{\mathscr{V}}
\newcommand{\JSig}{\Sig_{\underJ}}
\newcommand{\ob}{\mathop{\mathsf{ob}}}
\newcommand{\End}{\mathsf{End}}
\newcommand{\tensor}{\otimes}
\newcommand{\Mnd}{\mathsf{Mnd}}
\newcommand{\Lan}{\mathsf{Lan}}
\newcommand{\underJ}{\kern -0.5ex \mathscr{J}}
\newcommand{\Set}{\mathsf{Set}}
\newcommand{\op}{\mathsf{op}}
\newcommand{\Th}{\mathsf{Th}}
\newcommand{\T}{\mathbb{T}}
\newcommand{\C}{\mathscr{C}}
\newcommand{\B}{\mathscr{B}}
\newcommand{\A}{\mathscr{A}}
\newcommand{\U}{\mathcal{U}}
\newcommand{\Mod}{\text{-}\mathsf{Mod}}
\newcommand{\y}{\mathsf{y}}
\newcommand{\Alg}{\text{-}\mathsf{Alg}}
\newcommand{\K}{\mathscr{K}}
\newcommand{\M}{\mathscr{M}}
\newcommand{\N}{\mathbb{N}}
\newcommand{\Monadic}{\mathsf{Monadic}}
\newcommand{\G}{\mathscr{G}}
\newcommand{\Iso}{\mathsf{Iso}}
\newcommand{\Sig}{\mathsf{Sig}}
\newcommand{\scrT}{\mathscr{T}}
\newcommand{\Pow}{\mathscr{P}}
\newcommand{\X}{\mathscr{X}}
\newcommand{\E}{\mathscr{E}}
\newcommand{\MM}{\mathbb{M}}
\newcommand{\All}{\mathsf{All}}
\newcommand{\CAT}{\text{-}\mathsf{CAT}}
\newcommand{\ALG}{\mathsf{Alg}}
\newcommand{\Gph}{\mathsf{Gph}}
\newcommand{\aff}{\mathsf{aff}}
\newcommand{\Preth}{\mathsf{Preth}}
\newcommand{\MOD}{\mathsf{Mod}}
\newcommand{\Eqn}{\mathsf{Eqn}}
\newcommand{\bbV}{\mathbb{V}}
\newcommand{\calA}{\mathcal{A}}
\newcommand{\calE}{\mathcal{E}}
\newcommand{\Var}{\mathsf{Var}}
\newcommand{\Pos}{\mathsf{Pos}}
\newcommand{\llb}{\llbracket}
\newcommand{\rrb}{\rrbracket}
\newcommand{\SF}{\mathsf{SF}}
\newcommand{\ii}{\mathfrak{i}}
\newcommand{\cS}{\mathcal{S}}
\newcommand{\sfE}{\mathsf{E}}
\newcommand{\cP}{\mathcal{P}}
\newcommand{\tstar}{\mathop{\tilde{*}}}
\newcommand{\sfF}{\mathsf{F}}
\newcommand{\SET}{\mathsf{SET}}
\newcommand{\calF}{\mathcal{F}}
\newcommand{\cD}{\mathcal{D}}
\newcommand{\Sat}{\mathsf{Sat}}
\newcommand{\calB}{\mathcal{B}}
\newcommand{\highm}[1]{#1}
\newcommand{\highl}[1]{#1}
\begin{document}

\title{\Large \textbf{Diagrammatic presentations of enriched monads\\ and varieties for a subcategory of arities}}

\author{Rory B. B. Lucyshyn-Wright\let\thefootnote\relax\thanks{We acknowledge the support of the Natural Sciences and Engineering Research Council of Canada (NSERC), [funding reference numbers RGPIN-2019-05274, RGPAS-2019-00087, DGECR-2019-00273].  Cette recherche a \'et\'e financ\'ee par le Conseil de recherches en sciences naturelles et en g\'enie du Canada (CRSNG), [num\'eros de r\'ef\'erence RGPIN-2019-05274, RGPAS-2019-00087, DGECR-2019-00273].} \medskip \\ Jason Parker \let\thefootnote\relax\footnote{Keywords: monad; enriched category; presentations of monads; subcategory of arities; variety; operation; equation}
\\
\small Brandon University, Brandon, Manitoba, Canada}
\date{}

\maketitle

\begin{abstract}
The theory of \textit{presentations} of enriched monads was developed by Kelly, Power, and Lack, following classic work of Lawvere, and has been generalized to apply to \textit{subcategories of arities} in recent work of Bourke-Garner and the authors.  We argue that, while theoretically elegant and structurally fundamental, such presentations of enriched monads can be inconvenient to construct directly in practice, as they do not directly match the definitional procedures used in constructing many categories of enriched algebraic structures via operations and equations.

Retaining the above approach to presentations as a key technical underpinning, we establish a flexible formalism for directly describing enriched algebraic structure borne by an object of a $\V$-category $\C$ in terms of \textit{parametrized $\J$-ary operations} and \textit{diagrammatic equations} for a suitable subcategory of arities $\J \hookrightarrow \C$.  On this basis we introduce the notions of \textit{diagrammatic $\J$-presentation} and \textit{$\J$-ary variety}, and we show that the category of $\J$-ary varieties is dually equivalent to the category of $\J$-ary $\V$-monads.  We establish several examples of diagrammatic $\J$-presentations and $\J$-ary varieties relevant in both mathematics and theoretical computer science, and we define the \textit{sum} and \textit{tensor product} of diagrammatic $\J$-presentations.  We show that both \textit{$\J$-relative monads} and \textit{$\J$-pretheories} give rise to diagrammatic $\J$-presentations that directly describe their algebras.  Using diagrammatic $\J$-presentations as a method of proof, we generalize the \textit{pretheories-monads adjunction} of Bourke and Garner beyond the locally presentable setting.  Lastly, we generalize Birkhoff's Galois connection between classes of algebras and sets of equations to the above setting.
\end{abstract}

\section{Introduction}
Universal algebra begins with the idea of a set $A$ equipped with a family of finitary operations satisfying specified equations.  This is usually formalized by beginning with a \textit{signature} (or \textit{similarity type}), i.e. a set $\Sigma$ (whose elements we call \textit{operation symbols}) equipped with an assignment to each $\sigma \in \Sigma$ a finite cardinal $n_\sigma$ called the \textit{arity} of $\sigma$.  A \textit{$\Sigma$-algebra} is then a set $A$ equipped with a family of operations $\sigma^A:A^{n_\sigma} \rightarrow A$ indexed by the operation symbols $\sigma \in \Sigma$.  Syntactic expressions or \textit{terms} over the signature $\Sigma$ allow the specification of \textit{equations} over $\Sigma$, and by collecting together only those $\Sigma$-algebras that satisfy a specified set of equations $E$ we arrive at the notion of \textit{variety of algebras} that is fundamental to algebra in general.  The $\Sigma$-algebras in the given variety, and their homomorphisms, form a category, and one may then call the pair $P = (\Sigma,E)$ a \textit{presentation} of this category of algebras.  One of the complications inherent in this approach to algebra is that several different presentations may present isomorphic categories of algebras.  This issue was addressed by Lawvere \cite{Law:Phd}, whose \textit{algebraic theories} (or \textit{Lawvere theories}) classify varieties of algebras up to a suitable notion of isomorphism, and with the insights of Linton \cite{Linton:Eq} it was soon realized that Lawvere theories are equivalently given by \textit{finitary monads} on the category of sets, $\Set$, so that each presentation $P = (\Sigma,E)$ presents a finitary monad.

It is known that universal algebraic concepts admit a generalization to a setting in which sets are replaced by the objects of a symmetric monoidal closed category $\V$ or, more generally, a $\V$-enriched category $\C$.  Early contributions to this line of development in enriched category theory include works of Kock \cite{Kock}, Dubuc \cite{Dubucbook,Dubucsemantics}, Borceux and Day \cite{BorceuxDay}, Kelly and Power \cite{KellyPower}, Kelly and Lack \cite{KellyLackstronglyfinitary}.  Generalizing finitary monads, a recent theme in this area has been the study of $\V$-enriched monads for a \textit{subcategory of arities} $\J \hookrightarrow \C$, i.e. a full subcategory that is dense in the enriched sense, and whose objects $J \in \ob\J$ play the role of \textit{arities}.  In the setting of ordinary $\Set$-enriched category theory, this line of development originates with Linton \cite{Lintonoutline} and includes the paper of Berger, Melli\`es, and Weber \cite{BMW}, while enriched monads for subcategories of arities have been studied by the first author \cite{EAT}, by Bourke and Garner \cite{BourkeGarner}, and by the authors \cite{Pres}; also see \cite{LR} for related work in terms of classes of weights.

In the present paper, we develop a versatile framework and methodology for directly describing enriched algebraic structure in terms of operations and equations, relative to a suitable small subcategory of arities $\J \hookrightarrow \C$.  A technical underpinning for this paper is provided by the authors' recent paper \cite{Pres}, which generalizes certain prior contributions of Kelly, Power, and Lack \cite{KellyPower,KellyLackstronglyfinitary,Lackmonadicity} in the study of presentations of finitary enriched monads via signatures.

But here we depart substantially from these prior works, in that we employ a more flexible formalism for describing enriched algebraic structure that more directly matches the definitional procedures that are typically used in constructing specific examples of enriched algebraic categories, thus more directly reflecting mathematical practice.

Indeed, here we formulate enriched algebraic structure borne by an object $A$ of a suitable $\V$-category $\C$ as consisting of a family of \textit{parametrized $\J$-ary operations}, each of which is given by a morphism of the form
$$\C(J,A) \otimes C \rightarrow A$$
in $\C$, where the \textit{arity} $J$ is an object of a given subcategory of arities $\J \hookrightarrow \C$ and $C$ is an object of $\C$ that we call the \textit{parameter} (while $\otimes$ here is the action of $\V$ on $\C$ by tensor).  Such parametrized operations can be described equivalently as morphisms
$$\C(J,A) \rightarrow \C(C,A)$$
in $\V$.  For example, if $\C = \V = \Set$ then a parametrized operation may be written in the form $A^J \times C \rightarrow A$ or $A^J \rightarrow A^C$.

We introduce the notion of \textit{free-form $\J$-signature}, which \highl{is a span $\ob\J \leftarrow \cS \to \ob\C$ (i.e.~a span from $\ob\J$ to $\ob\C$) in which $\cS$ is a \textit{small} set.  Equivalently, a free-form $\J$-signature} is a small set $\cS$, whose elements we call \textit{operation symbols}, equipped with an assignment to each operation symbol $\sigma \in \cS$ an object $J_\sigma$ of $\J$ called the \textit{arity} and an object $C_\sigma$ of $\C$ called the \textit{parameter}.  An \textit{$\cS$-algebra} is then an object $A$ of $\C$ equipped with a family of parametrized $\J$-ary operations
$$\sigma^A\;:\;\C(J_\sigma,A) \longrightarrow \C(C_\sigma,A)\;\;\;\;\;\;(\sigma \in \cS),$$
and we obtain a $\V$-category of $\cS$-algebras, $\cS\Alg$, which we show is \textit{strictly $\J$-ary monadic} over $\C$, i.e., isomorphic to the $\V$-category of Eilenberg-Moore algebras for an associated $\J$-ary $\V$-monad $\T_\cS$.

In order to impose equations on $\cS$-algebras in an intuitive and practically convenient way, we systematically employ Lawvere's notion of \textit{algebraic structure} \cite{Law:Phd} and its enriched analogue developed by Dubuc \cite{Dubucbook}.  Given a free-form $\J$-signature $\cS$, we define a \textit{diagrammatic $\cS$-equation} $\omega \doteq \nu$ to be a family of parallel pairs of the form
$$\omega_A, \nu_A\;:\;\C(J,A) \rightrightarrows \C(C,A),$$
$\V$-natural in $A \in \cS\Alg$, where $J \in \ob\J$ and $C \in \ob\C$.  Such diagrammatic equations correspond bijectively to parallel pairs $t,u:C \rightrightarrows T_\cS J$ (i.e., pairs of Kleisli morphisms for $\T_\cS$ from an object of $\C$ to an object of $\J$), but the formulation in terms of diagrammatic $\cS$-equations is by far the more convenient formalism for constructing specific examples in the enriched context, because, as the nomenclature suggests, we simply construct whatever diagrammatic laws we want an $\cS$-algebra $A$ to satisfy---using whatever constructions are available in the host $\V$-category $\C$---and we then observe that the resulting parallel pairs of composite morphisms are $\V$-natural in $A \in \cS\Alg$ by construction.

We define a \textit{diagrammatic $\J$-presentation} $\cP = (\cS,\sfE)$ to consist of a free-form $\J$-signature $\cS$ together with a small family of diagrammatic $\cS$-equations $\sfE$, and we say that an $\cS$-algebra is a \textit{$\cP$-algebra} if it satisfies the diagrammatic equations in $\sfE$.  We show that the $\V$-category of $\cP$-algebras, $\cP\Alg$, is strictly $\J$-ary monadic over $\C$, so that $\cP$ determines an associated $\J$-ary $\V$-monad $\T_\cP$ on $\C$.  We define a \textit{$\J$-ary variety} to be a $\V$-category of the form $\cP\Alg$, regarded as an object of the slice $\V\CAT/\C$, and we show that the category of $\J$-ary varieties is dually equivalent to the category of $\J$-ary $\V$-monads on $\C$, thus generalizing several results in the literature.  We establish several examples of diagrammatic $\J$-presentations and $\J$-ary varieties, as well as certain constructions on diagrammatic $\J$-presentations including the \textit{sum} and \textit{tensor product}.  We further illustrate the versatility of diagrammatic $\J$-presentations by showing that the algebras of \textit{$\J$-relative $\V$-monads} \cite{Altenkirchmonads} and of \textit{$\J$-pretheories} \cite{BourkeGarner} may be described as algebras for diagrammatic $\J$-presentations; thus we establish establish new and more general results on the monadicity of these categories of structures.  We pursue further applications and ramifications of diagrammatic $\J$-presentations and $\J$-ary varieties, including a generalization of the \textit{pretheories-monads adjunction} of Bourke and Garner \cite{BourkeGarner} beyond the locally presentable setting, as well as a generalization of Birkhoff's Galois connection \cite{Birkhoff:Lattice} between classes of algebras and sets of equations to the present enriched setting of free-form $\J$-signatures $\cS$ and (parametrized) diagrammatic $\cS$-equations.

We now contrast the above concepts with the more traditional formalism for presentations of $\J$-ary $\V$-monads used in the authors' paper \cite{Pres}, which provides a fundamental technical basis for this work and generalizes methods developed by Kelly, Power, and Lack \cite{KellyPower,KellyLackstronglyfinitary}.  With this more traditional formalism, one defines a $\J$-signature $\Sigma$ to be a family of objects $\Sigma J$ of $\C$, indexed by the objects $J \in \ob\J$, thinking of $\Sigma J$ as the \textit{object of internal operation symbols} of arity $J$.  The theoretical importance of this approach lies in the result that the category of $\J$-ary $\V$-monads on $\C$ is \textit{monadic} over the category of these `traditional' $\J$-signatures \cite{Pres}, generalizing a result for finitary monads proved by Lack \cite{Lackmonadicity}.  Thus every $\J$-ary $\V$-monad has a canonical \textit{presentation} as a coequalizer, leading to a more traditional concept of presentation, namely a parallel pair of morphisms $\Gamma \rightrightarrows \T_\Sigma$ from a given (traditional) $\J$-signature $\Gamma$ into the $\J$-signature underlying the free $\J$-ary $\V$-monad $\T_\Sigma$ on $\Sigma$; see \cite{Pres}.  This notion of \mbox{$\J$-presentation} is theoretically fundamental and yet practically inconvenient for capturing many specific examples of categories of enriched algebraic structures, because it forces the user of the formalism to (1) explicitly construct each object of internal operation symbols $\Sigma J$ $(J \in \ob\J)$, often as a coproduct, and (2) similarly construct the objects $\Gamma J$ $(J \in \ob\J)$ and the morphisms $\Gamma J \rightrightarrows T_\Sigma J$, a process that is often tedious and just as often does not resemble the manner in which the given enriched algebraic structures are defined in practice.  Thus, we argue that \textit{diagrammatic} $\J$-presentations provide an essential practical tool for enriched algebra, enabling familiar definitional procedures to be directly employed in describing enriched algebraic structures, and thus providing an indispensable `user-friendly interface' to the fundamental theory of presentations via monadicity of $\J$-ary monads.

The diagrammatic $\J$-presentations that we introduce here should also be compared and contrasted with the \textit{term equational systems} of Fiore and Hur \cite{termequationalsystems} and the \textit{monadic equational systems} of Fiore \cite{Fiore:MES}.  Indeed, these works involve arbitrary sets of parallel pairs of morphisms $C \rightrightarrows TB$ for a $\V$-monad $\T$ on a $\V$-category $\C$, and the interpretation of such morphisms as parametrized operations on $\T$-algebras.  But these works do not employ a given subcategory of arities $\J \hookrightarrow \C$ and, moreover, they are not founded on the monadicity of any particular class of $\V$-monads over any class of signatures, whereas the present paper begins with a subcategory of arities $\J \hookrightarrow \C$ and is founded upon the monadicity of $\J$-ary $\V$-monads over $\J$-signatures.

We now outline the structure of the paper.  In \S\ref{background}, we review elements of the authors' paper \cite{Pres}, which provides a theoretical underpinning for this work, and we discuss our assumptions on the given closed category $\V$, the $\V$-category $\C$, and the subcategory of arities $\J \hookrightarrow \C$; these relatively mild assumptions allow the theory in this paper to be applied to wide classes of closed categories $\V$ and $\V$-categories $\C$ that need not be locally presentable, including the \textit{locally bounded closed categories} of \cite{Kelly} and the \textit{locally bounded $\V$-categories} of \cite{locbd}.  In \S\ref{sec:param_ops_interp}, we delineate those aspects of parametrized operations and diagrammatic equations that can be formulated for an arbitrary $\V$-monad, without reference to the subcategory of arities.

In \S\ref{sec:sys_eqns} we define \textit{systems of diagrammatic $\J$-ary equations} on $\J$-ary monadic $\V$-categories $\A$ over $\C$, and we show that the full sub-$\V$-category of $\A$ described by such a system of equations is $\J$-ary monadic over $\C$.

In \S\ref{freeform}, we define the fundamental concepts of free-form $\J$-signature, diagrammatic $\J$-presentation, and $\J$-ary variety, and we show not only that the $\V$-category of algebras for a diagrammatic $\J$-presentation is $\J$-ary monadic over $\C$, but also that there is a dual equivalence between $\J$-ary varieties and $\J$-ary $\V$-monads on $\C$.

In \S\ref{presentationexamples} we construct several examples of diagrammatic $\J$-presentations and diagrammatic systems of $\J$-ary equations that present various $\V$-categories of enriched mathematical structures, including the following: internal $R$-modules and $R$-affine spaces for an internal rig (or semiring) $R$ in a complete and cocomplete cartesian closed category $\V$ (\ref{Rmodules}, \ref{internalaffinespaces}); monoidal categories, regarded as $\mathsf{Cat}$-enriched algebraic structures in $\mathsf{Cat}$ (\ref{monoidalcategories}); internal categories in a locally presentable closed category $\V$ (\ref{internalcategoriesexample}), regarded as $\V$-enriched algebraic structures in the $\V$-category of internal graphs in $\V$; the global state algebras of Plotkin and Power \cite{PlotkinPowernotions} (\ref{PPexamples}); and some of the parametrized algebraic theories of Staton \cite{Statoninstances} (\ref{Statonexamples}).  The latter two examples illustrate the applicability of diagrammatic $\J$-presentations for algebraic computational effects.

In \S\ref{sumtensor} we further illustrate the versatility of diagrammatic $\J$-presentations by constructing the sum and tensor product (when $\C = \V$) of a given pair of diagrammatic $\J$-presentations; the tensor product generalizes Freyd's tensor product of finitary algebraic presentations \cite{Freydtensor}, and it can be expressed very conveniently in this formalism, since we simply write out the diagrammatic laws that express that the operations for the first presentation must \textit{commute} with those for the second.

In \S\ref{standardizedpresentations}, we show that every \textit{$\J$-relative $\V$-monad} $H$ (cf. \cite{Altenkirchmonads}) gives rise to a diagrammatic $\J$-presentation whose algebras are precisely those of $H$.  We thus establish general results on the monadicity of such algebras, deducing as a corollary that for any small full subcategory $\K$ of a locally presentable $\V$-category $\C$ over a locally presentable closed category $\V$, the $\V$-category of algebras for any $\K$-relative $\V$-monad is monadic by way of an accessible $\V$-monad.

In \S\ref{monadpretheory}, as one of the further main contributions of the paper, we use diagrammatic $\J$-presentations as a method of proof in order to generalize the \textit{pretheories-monads adjunction} of Bourke and Garner \cite{BourkeGarner} beyond the locally presentable setting.  We achieve this by showing that the algebras for a pretheory are precisely the algebras for an associated diagrammatic $\J$-presentation and so are the objects of a strictly $\J$-ary monadic $\V$-category over $\C$.

In \S\ref{Galoisconnection}, we generalize Birkhoff's well-known Galois connection between classes of algebras and sets of equations \cite{Birkhoff:Lattice} to the present setting of free-form $\J$-signatures $\cS$ and (parametrized) diagrammatic $\cS$-equations, thus enabling the consideration of \textit{the $\J$-ary variety over $\cS$ generated by a class of $\cS$-algebras}, and we prove characterization theorems for $\J$-ary varieties with reference to the resulting Galois connection.

\section{Background and given data}  
\label{background}

We assume that the reader has familiarity with enriched category theory, which is exposited (for example) in \cite{Kelly, Dubucbook} and \cite[Chapter 6]{Borceux2}. We use the term ``set" throughout to refer to a not-necessarily-small set (which we sometimes emphasize by prepending the phrase ``possibly large" in parentheses); when we wish to specify that a set is \emph{small}, we always use the terminology ``small set''. We have the category $\Set$ of small sets and the category $\SET$ of sets. We now recall the needed background, definitions, and results from our previous work \cite{Pres}, and in \ref{runningexamples} we recall several examples.

\begin{para}
\label{Vcategoryassumptions}
Throughout, we let $\V$ be a locally small, symmetric monoidal closed category that is complete and cocomplete. We also assume that $\V$ is a \textbf{closed factegory} (see \cite[6.1.2]{Pres}), meaning that $\V$ is equipped with a $\V$-enriched factorization system $(\E, \M)$ (see \cite{enrichedfact}) that need not be proper. 

Throughout, we let $\C$ be a \textbf{$\V$-factegory} \cite[6.1.5]{Pres}, i.e., a $\V$-category $\C$ equipped with an enriched factorization system $(\E_\C, \M_\C)$ such that each $\C(C, -) : \C \to \V$ ($C \in \ob\C$) sends $\M_\C$-morphisms to $\M$-morphisms.  We assume that the $\V$-category $\C$ is cocomplete and cotensored and has arbitrary conical cointersections (i.e. wide pushouts) of $\E_\C$-morphisms. Also, in order to invoke certain theorems in \cite{Pres}, we suppose either that $(\E_\C, \M_\C)$ is proper or that $\C$ is $\E_\C$-cowellpowered. \qed 
\end{para}

\begin{para}
\label{subcategoryaritiesassumptions}
{
We assume that the $\V$-category $\C$ is equipped with a small \textbf{subcategory of arities} $j : \J \hookrightarrow \C$, i.e. a small, full, and dense sub-$\V$-category. We suppose that $j : \J \hookrightarrow \C$ is \textbf{eleutheric}, which means that every $\V$-functor $H : \J \to \C$ has a left Kan extension along $j$ that is preserved by each $\C(J, -) : \C \to \V$ ($J \in \ob\J$), or equivalently that $j : \J \hookrightarrow \C$ presents $\C$ as a free $\Phi$-cocompletion of $\J$ for a class of small weights $\Phi$ (see \cite[3.8]{Pres}).  In particular, $j$ then presents $\C$ as a free $\Phi_{\underJ}$-cocompletion of $\J$ for the class $\Phi_{\underJ}$ consisting of the weights $\C(j-,C):\J^\op \rightarrow \V$ with $C \in \ob\C$ \cite[3.6]{Pres}.

We also assume that $j : \J \hookrightarrow \C$ is \textbf{bounded}, meaning that ($\J$ is small and) there is a regular cardinal $\alpha$ for which each $\C(J, -) : \C \to \V$ ($J \in \ob\J$) \emph{preserves the $\E$-tightness of small $\alpha$-filtered $\M$-cocones} (see \cite[6.1.6, 6.1.10]{Pres}).  In the special case where both $(\E_\C, \M_\C)$ and $(\E, \M)$ are the trivial factorization system $(\Iso, \All)$, this reduces to the requirement that each $\C(J, -) : \C \to \V$ ($J \in \ob\J$) preserve small conical $\alpha$-filtered colimits.  See \ref{runningexamples} for examples of bounded and eleutheric subcategories of arities. \qed  
}
\end{para}

\begin{para}
\label{Jary}
A $\V$-endofunctor $H : \C \to \C$ is \textbf{$\J$-ary} (or \textbf{$j$-ary}) if it preserves left Kan extensions along $j : \J \hookrightarrow \C$, equivalently, if $H$ is $\Phi_{\underJ}$-cocontinuous for the class of weights $\Phi_{\underJ}$ of \ref{subcategoryaritiesassumptions}.  Moreover, if $\Phi$ is any class of small weights such that $j$ presents $\C$ as a free $\Phi$-cocompletion of $\J$ (e.g.~$\Phi_{\underJ}$), then a $\V$-functor $H : \C \to \C$ is $\J$-ary iff $H$ is $\Phi$-cocontinuous \cite[4.2]{Pres}. We let $\End_{\underJ}(\C)$ be the (ordinary) category of $\J$-ary $\V$-endofunctors on $\C$, which is a (non-symmetric) strict monoidal category under composition. A \textbf{$\J$-ary $\V$-monad} is a $\V$-monad $\T$ on $\C$ whose underlying $\V$-endofunctor $T$ is $\J$-ary.  We write $\Mnd_{\underJ}(\C)$ to denote the category of $\J$-ary $\V$-monads, i.e., the category of monoids in the monoidal category $\End_{\underJ}(\C)$. Since $j : \J \hookrightarrow \C$ is eleutheric, we have an equivalence of categories $\V\CAT(\J, \C) \simeq \End_{\underJ}(\C)$ given by precomposition with $j$ and left Kan extension along $j$ (see \cite[4.5]{Pres}). 

A \textbf{$\V$-category over $\C$} is an object of the (strict) slice category $\V\CAT/\C$, i.e., a $\V$-category $\A$ equipped with a $\V$-functor $G:\A \rightarrow \C$.  We denote such an object of $\V\CAT/\C$ either by $(\A,G)$ or simply by $\A$.  Given a $\V$-monad $\T$ on $\C$, we write $\T\Alg$ to denote the $\V$-category of $\T$-algebras, and we regard $\T\Alg$ as a $\V$-category over $\C$ by way of the forgetful $\V$-functor $U^\T:\T\Alg \rightarrow \C$.

We say that a $\V$-functor $G:\A \rightarrow \C$ is \textbf{strictly $\J$-monadic} (or \textbf{strictly $\J$-ary monadic}), or that $\A$ is a \textbf{strictly $\J$-monadic $\V$-category over $\C$}, if $\A \cong \T\Alg$ in $\V\CAT/\C$ for some $\J$-ary $\V$-monad $\T$.  It then follows that $G$ has a left adjoint $F$ such that the $\V$-monad induced by the $\V$-adjunction $F \dashv G$ is precisely $\T$. Writing $\J\text{-}\Monadic^!$ for the full subcategory of $\V\CAT/\C$ consisting of the strictly $\J$-monadic $\V$-categories over $\C$, we have an equivalence $\J\text{-}\Monadic^! \simeq \Mnd_{\underJ}(\C)^\op$ by \cite[\S 9.3]{Pres}\qed
\end{para}

\begin{egg}
\label{runningexamples}
The following examples satisfy the blanket assumptions of  \ref{Vcategoryassumptions} and \ref{subcategoryaritiesassumptions}; see  \cite[4.7, 12.3]{Pres}:

\begin{enumerate}[leftmargin=*]
\item the subcategory of arities $\C_\alpha \hookrightarrow \C$ given by a skeleton of the full sub-$\V$-category of enriched $\alpha$-presentable objects in a locally $\alpha$-presentable $\V$-category $\C$ over a locally $\alpha$-presentable closed category $\V$ \cite{Kellystr}; in this class of examples, $\C_\alpha$-ary $\V$-monads are precisely \textit{$\alpha$-ary} or \textit{$\alpha$-accessible} $\V$-monads, i.e. $\V$-monads that preserve conical $\alpha$-filtered colimits, which correspond to the $\alpha$-ary variant of the enriched Lawvere theories of \cite{NishizawaPower};

\item the subcategory of arities $\SF(\V) \hookrightarrow \V$ consisting of the finite copowers of the unit object in a symmetric monoidal closed $\pi$-category  \cite{BorceuxDay} (e.g. a complete and cocomplete cartesian closed category); in this class of examples, $\SF(\V)$-ary $\V$-monads correspond to the enriched algebraic theories of Borceux and Day \cite{BorceuxDay}, and, in the case where $\V$ is cartesian closed, $\SF(\V)$-ary $\V$-monads are precisely the \textit{strongly finitary $\V$-monads} of Lack and Kelly \cite{KellyLackstronglyfinitary}, since in this case $\SF(\V)$ is the free $\V$-category on the category of finite cardinals (with all maps between them);

\item the subcategory of arities $\{I\} \hookrightarrow \V$ consisting of the unit object in a complete and cocomplete symmetric monoidal closed category $\V$; in this class of examples, $\{I\}$-ary $\V$-monads correspond to monoids in $(\V,\otimes,I)$;

\item the subcategory of arities $\y_{\scriptscriptstyle\X} : \X^\op \hookrightarrow [\X, \V]$ consisting of the representables, for a small $\V$-category $\X$ and a complete and cocomplete closed category $\V$; in the special case where $\X$ is the discrete $\V$-category on a set $X$, $\y_{\scriptscriptstyle\X}$-ary $\V$-monads on $[\X,\V] = \V^X$ correspond to $\V$-categories with object set $X$;

\item the subcategory of arities $\y_\Phi : \scrT^\op \hookrightarrow \Phi\Mod(\scrT)$ consisting of the representables in the $\V$-category of models of a $\Phi$-theory $\scrT$, where $\Phi$ is a locally small class of small weights satisfying Axiom A of \cite{LR} and $\V$ is a locally bounded and $\E$-cowellpowered closed category; in this class of examples, $\y_\Phi$-ary $\V$-monads are precisely the \textit{$\Phi$-accessible $\V$-monads} on $\Phi\Mod(\scrT)$ in the sense defined by Lack and Rosick\'{y} in \cite{LR};

\item any small and eleutheric subcategory of arities in a locally bounded $\V$-category $\C$ over a locally bounded closed category $\V$ \cite{locbd}.
\end{enumerate}
In examples 1 through 4, both $\V$ and $\C$ carry the $(\Iso,\All)$ factorization system, while in examples 5 and 6 both $\V$ and $\C$ carry associated proper factorization systems as discussed in \cite{locbd}. \qed
\end{egg}

\begin{para}\label{sys_ar}
In the case where $\C = \V$, the subcategory of arities $\J \hookrightarrow \V$ is said to be a \textbf{system of arities} if $\J$ is closed under the monoidal product $\tensor$ and contains the unit object $I$ of $\V$ \cite[3.8]{EAT}.  For an eleutheric system of arities $\J \hookrightarrow \V$, it was shown in \cite[11.8]{EAT} that there is an equivalence $\Mnd_{\underJ}(\V) \simeq \Th_{\underJ}$ between $\J$-ary $\V$-monads on $\V$ and \emph{$\J$-theories} (see \cite[4.1]{EAT}) that respects semantics, in the sense that if the $\J$-ary $\V$-monad $\T$ and the $\J$-theory $\scrT$ correspond under this equivalence, then the $\V$-category $\scrT\Alg$ of \textit{$\scrT$-algebras} \cite[5.1]{EAT} is equivalent to $\T\Alg$, while the $\V$-category $\scrT\Alg^!$ of \emph{normal} $\scrT$-algebras \cite[5.10]{EAT} is isomorphic to $\T\Alg$ in  $\V\CAT/\V$ (see \cite[11.14]{EAT}). The subcategories of arities $\SF(\V) \hookrightarrow \V$ and $\{I\} \hookrightarrow \V$ of \ref{runningexamples} are systems of arities \cite[3.6, 3.7]{EAT}, as is the subcategory of arities $\V_\alpha \hookrightarrow \V$ in a locally $\alpha$-presentable closed category $\V$ \cite[3.4]{EAT}. \qed 
\end{para}

\begin{para}
\label{signatures}
{
A \textbf{$\J$-signature} (in $\C$) is a $\V$-functor $\Sigma : \ob\J \to \C$, where we write $\ob\J$ to denote the discrete $\V$-category on $\ob\J$; hence $\Sigma$ is just an $\ob\J$-indexed family of objects of $\C$. We then have the ordinary category $\JSig(\C) = \V\CAT(\ob\J,\C)$ of $\J$-signatures \cite[7.1]{Pres}.

If $\Sigma$ is a $\J$-signature, then a \textbf{$\Sigma$-algebra} is an object $A$ of $\C$ equipped with structural morphisms $\alpha_J:\C(J, A) \tensor \Sigma J \to A$ ($J \in \ob\J$), or equivalently $\alpha_J:\C(J, A) \to \C(\Sigma J, A)$, or equivalently $\alpha_J:\Sigma J \to [\C(J, A), A]$, where we denote the tensor and cotensor of each object $C$ of $\C$ by each object $X$ of $\V$ by $X \otimes C$ and $[X,C]$, respectively. We write such a $\Sigma$-algebra either as $(A,\alpha)$ or simply as $A$. Given $\Sigma$-algebras $A = (A,\alpha)$ and $B = (B,\beta)$, a \textbf{$\Sigma$-homomorphism} $f:A \rightarrow B$ is a morphism in $\C$ that makes the following diagram commute for each $J \in \ob\J$:
\[\begin{tikzcd}
	\C(J, A) \tensor \Sigma J && \C(J, B) \tensor \Sigma J \\
	A && B
	\arrow["\alpha_J"', from=1-1, to=2-1]
	\arrow["{\C(J, f) \tensor \Sigma J}", from=1-1, to=1-3]
	\arrow["\beta_J", from=1-3, to=2-3]
	\arrow["f", from=2-1, to=2-3]
\end{tikzcd}\]
We then have a $\V$-category $\Sigma\Alg$ of $\Sigma$-algebras with a faithful $\V$-functor $U^\Sigma : \Sigma\Alg \to \C$ (see \cite[7.4]{Pres}).

The forgetful functor $\U : \Mnd_{\underJ}(\C) \to \JSig(\C)$ given by $\T \mapsto (TJ)_{J \in \ob\J}$ has a left adjoint by \cite[7.9]{Pres}, and the free $\J$-ary $\V$-monad $\T_\Sigma$ on a $\J$-signature $\Sigma$ moreover satisfies $\T_\Sigma\Alg \cong \Sigma\Alg$ in $\V\CAT/\C$ by \cite[7.9]{Pres}. The forgetful functor $\U : \Mnd_{\underJ}(\C) \to \JSig(\C)$ is also monadic by \cite[8.2]{Pres}. \qed   
}
\end{para}

\begin{para}
\label{algebraiccolimits}
If $\MM : \K \to \Mnd_{\underJ}(\C)$ is a small diagram of $\J$-ary $\V$-monads on $\C$, then $\MM$ has a colimit $\T_\MM$ that is \textbf{algebraic} (see \cite[9.3.8]{Pres}), meaning that it is sent to a limit by the semantics functor $\ALG : \Mnd_{\underJ}(\C)^\op \to \V\CAT/\C$ (given on objects by $\T \mapsto \T\Alg$). In particular, $\Mnd_{\underJ}(\C)$ has \textbf{algebraic coequalizers}, meaning that each parallel pair $\T \rightrightarrows \T'$ of $\J$-ary $\V$-monads has a coequalizer in $\Mnd_{\underJ}(\C)$ that is sent to an equalizer by the semantics functor $\ALG$. \qed 
\end{para} 

\begin{para}
\label{presentations}
Given a $\J$-ary $\V$-monad $\T$ on $\C$, we write simply $\T$ to denote the $\J$-signature underlying $\T$ (\ref{signatures}).  A \textbf{system of $\J$-ary equations over $\T$} (see \cite[10.1.2]{Pres}) is a parallel pair $E = (t, u : \Gamma \rightrightarrows \T)$ of $\J$-signature morphisms, where $\Gamma$ is a $\J$-signature. A \textbf{system of $\J$-ary equations} is then a pair $(\T, E)$ consisting of a $\J$-ary $\V$-monad $\T$ and a system of $\J$-ary equations $E$ over $\T$. A \textbf{$\J$-presentation} is a pair $P = (\Sigma, E)$ consisting of a $\J$-signature $\Sigma$ and a system of $\J$-ary equations $E = (t, u : \Gamma \rightrightarrows \T_\Sigma)$ over the free $\J$-ary $\V$-monad $\T_\Sigma$ on $\Sigma$.  As we also consider other notions of signature and presentation relative to $\J$ in this paper, we refer to $\J$-signatures, $\J$-presentations, and systems of $\J$-ary equations also as \textbf{traditional $\J$-signatures}, \textbf{traditional $\J$-presentations}, and \textbf{traditional systems of $\J$-ary equations} for clarity.

If $(\T,E)$ is a system of $\J$-ary equations, where $E = (t, u : \Gamma \rightrightarrows \T)$,  then we write $\T/E$ to denote the algebraic coequalizer of the parallel pair $t^\sharp, u^\sharp : \T_\Gamma \rightrightarrows \T$ in $\Mnd_{\underJ}(\C)$ induced by $t, u$ via the adjunction between $\Mnd_{\underJ}(\C)$ and $\JSig(\C)$, and we call the $\J$-ary $\V$-monad $\T/E$ the \textbf{quotient of $\T$ by $E$} \cite[10.1.6]{Pres}.  In particular, if $P = (\Sigma,E)$ is a $\J$-presentation, then $(\T_\Sigma,E)$ is a system of $\J$-ary equations, and we call $\T_P := \T_\Sigma / E$ the \textbf{$\J$-ary $\V$-monad presented by $P$}.  We then say that a given $\J$-ary $\V$-monad is \textbf{presented by $P$} if it is isomorphic to $\T_P$.  Every $\J$-ary $\V$-monad $\T$ has a canonical $\J$-presentation $P_\T$ (see \cite[10.1.9]{Pres}), so that $\T\Alg \cong P_\T\Alg$ in $\V\CAT/\C$. \qed
\end{para}

\begin{para}
\label{algebrasforpresentation}
Let $\T = (T, \eta, \mu)$ be a $\J$-ary $\V$-monad on $\C$. Given any object $C$ of $\C$, there is a $\J$-ary $\V$-monad $\langle C, C\rangle$ on $\C$ with $\langle C, C\rangle J = [\C(J, C), C]$ for each $J \in \ob\J$ (see \cite[10.2.9]{Pres}).  Given a $\T$-algebra $(A,a)$, which we also write simply as $A$, there is a canonical $\V$-monad morphism $\ii^A:\T \to \langle A, A\rangle$, which we call the \textbf{$\J$-ary interpretation morphism} for $A$ \cite[10.2.10]{Pres}. For each $J \in \ob\J$ the component $\ii^A_J:TJ \to \langle A, A\rangle J = [\C(J, A), A]$ is the transpose of the composite $\C(J,A) \xrightarrow{T_{JA}} \C(TJ,TA) \xrightarrow{\C(TJ,a)} \C(TJ,A)$ \cite[10.2.1]{Pres}.

Given a (traditional) system of $\J$-ary equations $(\T,E)$, where $E = (t, u : \Gamma \rightrightarrows \T)$, a \textbf{$(\T,E)$-algebra} is, by definition, a $\T$-algebra $A$ such that $\ii^A_J \circ t_J = \ii^A_J \circ u_J:\Gamma J \rightarrow [\C(J,A),A]$ for each $J \in \ob\J$ (equivalently, such that $\ii^A \circ t^\sharp = \ii^A \circ u^\sharp:\T_\Gamma \rightarrow \langle A,A\rangle$ with the notation of \ref{presentations}).  We let $(\T,E)\Alg$ be the full sub-$\V$-category of $\T\Alg$ consisting of the $(\T,E)$-algebras, and we regard $(\T,E)\Alg$ as a $\V$-category over $\C$ by means of the $\V$-functor  $(\T,E)\Alg \rightarrow \C$ obtained as a restriction of $U^\T$.  Writing $\T/E$ for the quotient of $\T$ by $E$ (\ref{presentations}), we have an isomorphism $(\T/E)\Alg \cong (\T,E)\Alg$ in $\V\CAT/\C$ by \cite[10.2.13]{Pres}, under which each $(\T/E)$-algebra $(A,a)$ corresponds to the $(\T,E)$-algebra $(A,a \circ q^E_A)$ where $q^E:\T \rightarrow \T/E$ is the regular epimorphism that presents $\T/E$ as a quotient. 

Given a $\J$-presentation $P = (\Sigma,E)$, where $E = (t,u:\Gamma \rightrightarrows \T_\Sigma)$, a \textbf{$P$-algebra} is a $\Sigma$-algebra $A$ such that $\ii^A_J \circ t_J = \ii^A_J \circ u_J$ for all $J \in \ob\J$, where we write $\ii^A:\T_\Sigma \rightarrow \langle A,A\rangle$ for the interpretation morphism obtained by regarding $A$ equivalently as a $\T_\Sigma$-algebra via the isomorphism $\Sigma\Alg \cong \T_\Sigma\Alg$ in $\V\CAT/\C$ (\ref{signatures}).  Hence, a $P$-algebra is precisely a $\Sigma$-algebra whose corresponding $\T_\Sigma$-algebra is a $(\T_\Sigma,E)$-algebra for the system of $\J$-ary equations $(\T_\Sigma,E)$.  We write $P\Alg$ for the full sub-$\V$-category of $\Sigma\Alg$ consisting of $P$-algebras, and we regard $P\Alg$ as a $\V$-category over $\C$ by means of the $\V$-functor $P\Alg \rightarrow \C$ obtained as a restriction of $U^\Sigma$.  Hence $P\Alg \cong (\T_\Sigma,E)\Alg \cong \T_P\Alg$ in $\V\CAT/\C$ \cite[10.2.14]{Pres} where $\T_P = \T_\Sigma / E$ is the $\J$-ary $\V$-monad presented by $P$ (\ref{presentations}). \qed
\end{para}
 
 \section{Parametrized operations and interpretation relative to a $\V$-adjunction}\label{sec:param_ops_interp}
 
Everything in the present section applies when $\C$ is an arbitrary tensored and cotensored $\V$-category, so that the full force of the assumptions in \S \ref{background} is not required.
 
\begin{defn}\label{param_op}
Given objects $A,J,C \in \ob\C$, a \textbf{parametrized operation} on $A$ with \textbf{arity} $J$ and \textbf{parameter object} (or \textbf{parameter}) $C$ is a morphism $\omega:\C(J,A) \rightarrow \C(C,A)$ in $\V$.  Such a parametrized operation is equivalently given by a morphism $\omega:\C(J,A) \otimes C \rightarrow A$ in $\C$, or by a morphism $\omega:C \rightarrow [\C(J,A),A]$ in $\C$. When $\C = \V$, we may also write $\omega : C \tensor \V(J, A) \to A$. \qed
\end{defn}

\noindent Later we shall restrict attention to the case where the arity $J$ is an object of the specified subcategory of arities $\J \hookrightarrow \C$, while crucially we shall \textit{not} require the parameter $C$ to be an object of $\J$.  It is for this reason that we deliberately choose to call the object $C$ a \emph{parameter} rather than an \emph{(output) arity}. The following generalizes \ref{param_op} (which is recovered in the case where $\A$ is the unit $\V$-category):
 
\begin{defn}\label{defn:param_ops_on_vfunctor}
Let $G:\A \rightarrow \C$ be a $\V$-functor, so that we may regard $\A$ as a $\V$-category over $\C$ (\ref{Jary}).  Given objects $J,C \in \ob\C$, a \textbf{pa\-ra\-me\-tri\-zed operation on $G$} (or \textbf{on $\A$}) with \textbf{arity} $J$ and \textbf{parameter} $C$ is a $\V$-natural transformation $\omega:\C(J,G-) \rightarrow \C(C,G-)$.  A \textbf{diagrammatic equation on $G$} (or \textbf{on $\A$}), written as $\omega \doteq \nu$, is a parallel pair of $\V$-natural transformations $\omega,\nu:\C(J,G-) \rightrightarrows \C(C,G-)$ for a specified pair of objects $J,C$ of $\C$; an object $A \in \ob\A$ \textbf{satisfies} the equation $\omega \doteq \nu$ if $\omega_A = \nu_A$. \qed
\end{defn}

\noindent As a variation on \cite[II.3]{Dubucbook}, if $G:\A \rightarrow \C$ is a $\V$-functor then we write $\mathsf{Str}_0(G)$ to denote the (ordinary) category whose objects are those of $\C$ and whose morphisms $\omega:X \rightarrow Y$ are the parametrized operations $\omega:\C(X,G-) \rightarrow \C(Y,G-)$ with arity $X$ and parameter $Y$.  The following is a consequence of \cite[Proposition II.1.5 and p. 81]{Dubucbook}:

\begin{prop}\label{interp_iso}
Let $F \dashv G:\A \to \C$ be a $\V$-adjunction with unit $\eta$ and counit $\varepsilon$, and let $\T$ be the induced $\V$-monad on $\C$.  Then the category $\mathsf{Str}_0(G)$ underlies a $\V$-category $\mathsf{Str}(G)$ (called the \textnormal{$\V$-structure} of $G$) that is isomorphic to the opposite of the Kleisli $\V$-category $\C_\T$ for $\T$.  Under the latter isomorphism $\mathsf{Str}(G) \cong \C_\T^\op$, if $t:C \rightarrow J$ is a morphism in $\C_\T$, given by a morphism $t:C \rightarrow TJ$ in $\C$ with $C,J \in \ob\C$, then the corresponding morphism in $\mathsf{Str}(G)$ is the $\V$-natural transformation $\C(J,G-) \rightarrow \C(C,G-)$ consisting of the composite morphisms
\begin{equation}\label{eq:interp_comp}\C(J,GA) \xrightarrow{T_{J,GA}} \C(TJ,TGA) \xrightarrow{\C(t,a_A)} \C(C,GA)\end{equation}
associated to objects $A$ of $\A$, where we write $a_A:TGA \to GA$ for the $\T$-algebra structure on $GA$ defined as $a_A = G\varepsilon_A$.  In the opposite direction, if $\omega:\C(J,G-) \rightarrow \C(C,G-)$ is a parametrized operation on $G$, then the corresponding morphism $C \rightarrow J$ in $\C_\T$ is the morphism $C \rightarrow TJ$ in $\C_0$ obtained as the composite $I \xrightarrow{\eta_J} \C(J,TJ) \xrightarrow{\omega_{FJ}} \C(C,TJ)$ in $\V$.\qed
\end{prop}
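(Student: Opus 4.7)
The plan is to set up a chain of $\V$-natural isomorphisms that realizes the bijection between parametrized operations and Kleisli morphisms, and then trace it in both directions to extract the explicit formulas.

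First, for each pair of objects $J, C \in \ob\C$, the $\V$-adjunction $F \dashv G$ gives $\V$-natural isomorphisms $\C(J, G-) \cong \A(FJ, -)$ and $\C(C, G-) \cong \A(FC, -)$ of $\V$-functors $\A \to \V$. The enriched Yoneda lemma then identifies the $\V$-object of $\V$-natural transformations between the latter $\V$-functors with $\A(FC, FJ)$, and one further application of $F \dashv G$ yields $\A(FC, FJ) \cong \C(C, GFJ) = \C(C, TJ)$. Stringing these together produces a bijection between parametrized operations $\omega : \C(J, G-) \to \C(C, G-)$ and morphisms $t : C \to TJ$ in $\C$, i.e., morphisms $C \to J$ in the Kleisli $\V$-category $\C_\T$. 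Because the last step swaps the roles of $C$ and $J$, the induced $\V$-category structure on $\mathsf{Str}_0(G)$ satisfies $\mathsf{Str}(G)(J, C) \cong \C_\T(C, J) = \C_\T^\op(J, C)$; verifying that composition and identities agree (as in \cite[Proposition~II.1.5]{Dubucbook}) then yields the isomorphism $\mathsf{Str}(G) \cong \C_\T^\op$.

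Next, I would trace a Kleisli morphism $t : C \to TJ$ forward through the chain to read off the explicit formula for the corresponding $\omega$. The mate of $t$ under $F \dashv G$ is $\tilde{t} = \varepsilon_{FJ} \circ Ft : FC \to FJ$; Yoneda sends $\tilde{t}$ to precomposition $\A(FJ, -) \to \A(FC, -)$; and re-transposing through the adjunctions on both sides maps $f : J \to GA$ to the adjoint of $\overline{f} \circ \tilde{t}$, where $\overline{f} = \varepsilon_A \circ Ff$. Using $a_A = G\varepsilon_A$, $T = GF$, and the monad unit law (together with $\V$-naturality of $\eta$ to commute $\eta$ past $t$), this adjoint simplifies to $a_A \circ Tf \circ t = \C(t, a_A) \circ T_{J, GA}(f)$, which is precisely \eqref{eq:interp_comp}.

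For the inverse direction I would use a universal-element argument: evaluating the formula \eqref{eq:interp_comp} at $A = FJ$ and $f = \eta_J : J \to GFJ = TJ$, and applying the monad unit law $\mu_J \circ T\eta_J = \id_{TJ}$ (noting $a_{FJ} = G\varepsilon_{FJ} = \mu_J$), gives $\omega_{FJ}(\eta_J) = \mu_J \circ T\eta_J \circ t = t$, so that $t$ is recovered from $\omega$ as the composite $I \xrightarrow{\eta_J} \C(J, TJ) \xrightarrow{\omega_{FJ}} \C(C, TJ)$ in $\V$, as claimed. The main obstacle is purely bookkeeping: keeping track of the two applications of $F \dashv G$ and the enriched Yoneda identification with sufficient care to see the explicit formula fall out; this is routine given the triangle identities and the definitions $T = GF$ and $\mu = G\varepsilon F$.
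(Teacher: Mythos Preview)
Your proposal is correct. The paper itself gives no proof of this proposition, instead deferring to \cite[Proposition~II.1.5 and p.~81]{Dubucbook}; your argument via the adjunction isomorphisms $\C(J,G-)\cong\A(FJ,-)$, enriched Yoneda, and a second application of $F\dashv G$ is precisely the standard route underlying Dubuc's result, and your computations of the explicit formulas in both directions are accurate.
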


\begin{defn}\label{param_tterm}
Given any $\V$-monad $\T$ on $\C$, a \textbf{parametrized $\T$-term (with arity $J$ and parameter $C$)} is a morphism $t : C \to TJ$ for a pair of objects $J,C \in \ob\C$.  A \textbf{parametrized $\T$-equation}, denoted by $t \doteq u:C \rightrightarrows TJ$, is a parallel pair $t, u : C \rightrightarrows TJ$ of parametrized $\T$-terms for a specified pair of objects $J,C \in \ob\C$.\footnote{Fiore and Hur \cite{termequationalsystems} use the terms \textit{generalised term} and \textit{generalised equation} to refer to these concepts.} \qed
\end{defn}

\begin{defn}\label{defn:interp_in_talg}
Let $G:\A \to \C$ be a right adjoint $\V$-functor, and let $\T$ be the induced $\V$-monad on $\C$. Given a parametrized $\T$-term $t:C \rightarrow TJ$ and an object $A$ of $\A$, we denote the composite morphism in \eqref{eq:interp_comp} by
\begin{equation}\label{eq:interp_in_A}\llb t \rrb_A\;:\;\C(J,GA) \longrightarrow \C(C,GA)\end{equation}
and we call $\llb t \rrb_A$ the \textbf{interpretation of $t$ in $A$}.  Hence $\llb t\rrb_A$ is a parametrized operation on $GA$ with arity $J$ and parameter $C$.  In view of \ref{interp_iso}, the resulting family of morphisms \eqref{eq:interp_in_A} is $\V$-natural in $A \in \A$ and so is a parametrized operation on $G$ that we write as
$$\llb t\rrb\;:\;\C(J,G-) \longrightarrow \C(C,G-)$$
and call simply the \textbf{interpretation of $t$ (in $\A$)}.  Note that $\llb t\rrb$ is the morphism in $\mathsf{Str}(G)$ that corresponds to $t$ under the isomorphism $\mathsf{Str}(G) \cong \C_\T^\op$ of \ref{interp_iso}.  Given a parametrized $\T$-equation $t \doteq u:C \rightrightarrows TJ$, we say that an object $A$ of $\A$ \textbf{satisfies} $t \doteq u$, and we write $A \models t \doteq u$, if $\llb t \rrb_A = \llb u \rrb_A:\C(J,GA) \rightarrow \C(C,GA)$.  In other words, $A$ satisfies $t \doteq u$ iff $A$ satisfies the diagrammatic equation $\llb t \rrb \doteq \llb u \rrb:\C(J,G-) \rightrightarrows \C(C,G-)$ consisting of the interpretations of $t$ and $u$ in $\A$.\qed
\end{defn}

\begin{egg}\label{interp_in_talg}
Given a $\V$-monad $\T$ on $\C$, we may apply \ref{interp_iso} and \ref{defn:interp_in_talg} to the right adjoint $\V$-functor $U^\T:\T\Alg \rightarrow \C$.  In particular, given a $\T$-algebra $A = (A,a)$, we may consider the interpretation $\llb t \rrb_A = \C(t,a) \circ T_{JA}:\C(J,A) \to \C(C,A)$ of a parametrized $\T$-term $t:C \to TJ$ in $A$, and we may ask whether $A$ satisfies a given parametrized $\T$-equation $t \doteq u$, i.e. whether $\llb t\rrb_A = \llb u\rrb_A$.
\end{egg}

\begin{prop}\label{teqns_that_hold_in_every_talg}
Let $t \doteq u:C \rightrightarrows TJ$ be a parametrized $\T$-equation, where $\T = (T,\eta,\mu)$ is a $\V$-monad on $\C$.  The following are equivalent: (1) every $\T$-algebra $A$ satisfies $t \doteq u$; (2) the free $\T$-algebra $(TJ,\mu_J)$ satisfies $t \doteq u$; (3) $t = u$.
\end{prop}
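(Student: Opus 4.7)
The plan is to exploit the fact, already codified in Proposition \ref{interp_iso}, that the assignment $t \mapsto \llb t \rrb$ is the object part of an isomorphism $\mathsf{Str}(U^\T) \cong \C_\T^\op$, together with the explicit recovery formula provided there for extracting $t$ from $\llb t \rrb$ using only its component at the free algebra $FJ = (TJ, \mu_J)$.

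I would organize the equivalence as $(3) \Rightarrow (1) \Rightarrow (2) \Rightarrow (3)$. The implication $(3) \Rightarrow (1)$ is immediate from \ref{defn:interp_in_talg}: if $t = u$ then by definition $\llb t \rrb_A = \C(t,a) \circ T_{JA} = \C(u,a) \circ T_{JA} = \llb u \rrb_A$ for every $\T$-algebra $A = (A,a)$. The implication $(1) \Rightarrow (2)$ is trivial since $(TJ,\mu_J)$ is itself a $\T$-algebra.

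The heart of the matter is $(2) \Rightarrow (3)$. Apply \ref{interp_iso} and \ref{defn:interp_in_talg} to the right adjoint $U^\T : \T\Alg \to \C$ with its standard left adjoint $F^\T$, so that $FJ = (TJ,\mu_J)$ is the free $\T$-algebra on $J$. According to the recovery formula stated at the end of \ref{interp_iso}, the parametrized $\T$-term corresponding to the parametrized operation $\llb t \rrb : \C(J,U^\T-) \to \C(C,U^\T-)$ is obtained as the composite
$$I \xrightarrow{\eta_J} \C(J,TJ) \xrightarrow{\llb t \rrb_{FJ}} \C(C,TJ),$$
and by the bijective correspondence of \ref{interp_iso} this composite must equal $t$ itself. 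Likewise the analogous composite built from $\llb u \rrb_{FJ}$ equals $u$. Assumption (2) says exactly that $\llb t \rrb_{FJ} = \llb u \rrb_{FJ}$, and hence these two composites coincide, yielding $t = u$.

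The only step requiring real care is verifying that the recovery formula indeed specializes to give $t$ (respectively $u$) on the nose when applied to the parametrized operation $\llb t \rrb$ (respectively $\llb u \rrb$), but this is precisely the content of the isomorphism $\mathsf{Str}(U^\T) \cong \C_\T^\op$ in \ref{interp_iso}, so no further calculation is needed. I anticipate no substantive obstacle; the argument is essentially a one-line application of the Yoneda-style recovery already recorded in the cited proposition.
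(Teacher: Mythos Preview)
Your proof is correct and takes essentially the same approach as the paper: both rely on Proposition \ref{interp_iso}, and in particular on the recovery formula there which extracts $t$ from $\llb t\rrb$ using only the component at the free algebra $FJ$. The paper's proof is a one-line summary (``(1) holds iff $\llb t\rrb = \llb u\rrb$, and the result now readily follows from \ref{interp_iso}''), whereas you have spelled out the cycle $(3)\Rightarrow(1)\Rightarrow(2)\Rightarrow(3)$ explicitly; the substance is the same.
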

\begin{proof}
(1) holds iff $\llb t \rrb = \llb u\rrb$, and the result now readily follows from \ref{interp_iso}.
\end{proof}

\begin{para}\label{mnd_morph_alg}
Let $\lambda:\T \rightarrow \mathbb{S}$ be a morphism of $\V$-monads on $\C$, and let $A = (A,a)$ be an $\mathbb{S}$-algebra.  Then $(A,a \circ \lambda_A)$ is a $\T$-algebra, which we also denote simply by $A$, by abuse of notation.  In particular, if $t:C \rightarrow TJ$ is a parametrized $\T$-term then we write $\llb t \rrb_A$ to denote the interpretation of $t$ in the $\T$-algebra $(A,a \circ \lambda_A)$, and if $t \doteq u$ is a parametrized $\T$-equation then we say that that $A$ \textit{satisfies} $t \doteq u$ if $(A,a \circ \lambda_A)$ satisfies $t \doteq u$.
\end{para}

\begin{prop}\label{mnd_morph_interp}
Let $\lambda:\T \rightarrow \mathbb{S}$ be a morphism of $\V$-monads on $\C$, and let $A$ be an $\mathbb{S}$-algebra.  With the notation of \ref{mnd_morph_alg}, $\llb t\rrb_A = \llb \lambda_J \circ t\rrb_A$ for each parametrized $\T$-term $t:C \rightarrow TJ$.  Hence, with the terminology of \ref{mnd_morph_alg}, $A$ satisfies a parametrized $\T$-equation $t \doteq u:C \rightrightarrows TJ$ if and only if $A$ satisfies the parametrized $\mathbb{S}$-equation $\lambda_J \circ t \doteq \lambda_J \circ u:C \rightrightarrows SJ$.
\end{prop}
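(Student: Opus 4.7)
The plan is to unfold both sides of the claimed equation using the formula from \ref{defn:interp_in_talg} and reduce the problem to the $\V$-enriched naturality of $\lambda$. Writing $A = (A,a)$ for the given $\mathbb{S}$-algebra and $(A,a\circ\lambda_A)$ for the induced $\T$-algebra of \ref{mnd_morph_alg}, the definition gives
\[
\llb t\rrb_A \;=\; \C(t,\,a\circ\lambda_A)\circ T_{J,A}
\quad\text{and}\quad
\llb \lambda_J \circ t\rrb_A \;=\; \C(\lambda_J\circ t,\, a)\circ S_{J,A}
\]
as morphisms $\C(J,A)\to\C(C,A)$ in $\V$. Using the bifunctoriality of $\C(-,-)$, both composites factor through the common postcomposition $\C(t,a):\C(TJ,SA)\to\C(C,A)$, namely
\[
\llb t\rrb_A = \C(t,a)\circ\C(TJ,\lambda_A)\circ T_{J,A},
\qquad
\llb \lambda_J\circ t\rrb_A = \C(t,a)\circ\C(\lambda_J,SA)\circ S_{J,A}.
\]
It therefore suffices to show that
\[
\C(TJ,\lambda_A)\circ T_{J,A} \;=\; \C(\lambda_J, SA)\circ S_{J,A}
\]
as morphisms $\C(J,A)\to\C(TJ,SA)$ in $\V$.

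The key step is to recognize this as a direct instance of the $\V$-naturality of $\lambda$. Since $\lambda:\T\to\mathbb{S}$ is a morphism of $\V$-monads, its underlying $\V$-natural transformation $\lambda:T\Rightarrow S$ between $\V$-endofunctors of $\C$ yields, for each pair of objects $J,A\in\ob\C$, a commutative square in $\V$ expressed by precisely the displayed equation above; this is the hom-level formulation of $\V$-naturality. Hence the first assertion follows, and the second then holds purely formally: by definition $A$ satisfies $t\doteq u$ iff $\llb t\rrb_A = \llb u\rrb_A$, which by applying the first assertion to both $t$ and $u$ is equivalent to $\llb \lambda_J\circ t\rrb_A = \llb \lambda_J\circ u\rrb_A$, i.e.~to $A$ satisfying $\lambda_J\circ t \doteq \lambda_J\circ u$ as a parametrized $\mathbb{S}$-equation.

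There is no substantial obstacle here: the content is confined to unwinding the definitions and invoking hom-level $\V$-naturality of $\lambda$. The only mild care required is to keep track of which $\V$-functor's enriched action on homs ($T_{J,A}$ vs.~$S_{J,A}$) appears on each side and to use $\V$-naturality at the level of internal homs rather than merely at elements, which is automatic since $\lambda$ is assumed to be a morphism of $\V$-monads and not just of ordinary monads.
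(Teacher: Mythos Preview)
Your proof is correct and follows essentially the same route as the paper's: both unfold the interpretation formula from \ref{interp_in_talg}, use bifunctoriality of $\C(-,-)$ to isolate the middle step, and then invoke the hom-level $\V$-naturality of $\lambda$ to identify $\C(TJ,\lambda_A)\circ T_{J,A}$ with $\C(\lambda_J,SA)\circ S_{J,A}$. The paper simply presents this as a single chain of equalities, while you spell out the reduction more explicitly, but the argument is the same.
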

\begin{proof}
For the first claim, we may use \ref{interp_in_talg} and the $\V$-naturality of $\lambda$ to compute that $\llb t \rrb_A = \C(t,a \circ \lambda_A) \circ T_{JA} = \C(t,a) \circ \C(TJ,\lambda_A) \circ T_{JA} = \C(t,a) \circ \C(\lambda_J,SA) \circ S_{JA} = \C(\lambda_J \circ t,a) \circ S_{JA} = \llb \lambda_J \circ t\rrb_A$.  The second claim follows from the first.
\end{proof}

\begin{cor}\label{teqn_equalized_by_mnd_morph}
Let $\lambda:\T \rightarrow \mathbb{S}$ be a morphism of $\V$-monads on $\C$, and let $t \doteq u:C \rightrightarrows TJ$ be a parametrized $\T$-equation.  Then the following are equivalent, where we employ the terminology of \ref{mnd_morph_alg}: (1) every $\mathbb{S}$-algebra $A$ satisfies $t \doteq u$; (2) the free $\mathbb{S}$-algebra on $J$ satisfies $t \doteq u$; (3) $\lambda_J \circ t = \lambda_J \circ u$.
\end{cor}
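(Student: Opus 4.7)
The plan is to reduce this directly to Proposition \ref{teqns_that_hold_in_every_talg} by using the translation supplied by Proposition \ref{mnd_morph_interp}. The idea is that while $t \doteq u$ is a parametrized $\T$-equation, Proposition \ref{mnd_morph_interp} tells us that $\mathbb{S}$-algebra satisfaction of $t \doteq u$ (in the sense of \ref{mnd_morph_alg}) is the very same condition as $\mathbb{S}$-algebra satisfaction of the genuine parametrized $\mathbb{S}$-equation $\lambda_J \circ t \doteq \lambda_J \circ u:C \rightrightarrows SJ$. Once we make that translation, \ref{teqns_that_hold_in_every_talg} applied to $\mathbb{S}$ does all the work.

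In more detail, I would first observe, using \ref{mnd_morph_interp}, that (1) is equivalent to the statement that every $\mathbb{S}$-algebra satisfies the parametrized $\mathbb{S}$-equation $\lambda_J \circ t \doteq \lambda_J \circ u$. Next I would apply \ref{teqns_that_hold_in_every_talg} to this $\mathbb{S}$-equation to conclude that the latter condition is equivalent to: the free $\mathbb{S}$-algebra $(SJ,\mu^{\mathbb{S}}_J)$ satisfies $\lambda_J \circ t \doteq \lambda_J \circ u$, and equivalent to $\lambda_J \circ t = \lambda_J \circ u$. The first of these is, by another application of \ref{mnd_morph_interp}, the same as saying that the free $\mathbb{S}$-algebra on $J$ satisfies $t \doteq u$ in the sense of \ref{mnd_morph_alg}, which is (2); while the second is exactly (3).

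There is really no obstacle here, since everything is bookkeeping via \ref{mnd_morph_interp} and \ref{teqns_that_hold_in_every_talg}. The only point worth being a little careful about is to make sure the notational convention of \ref{mnd_morph_alg}, under which an $\mathbb{S}$-algebra $A$ is regarded as a $\T$-algebra via $\lambda$ before the symbol $\llb t\rrb_A$ is even meaningful, is consistently applied in both directions of the translation.
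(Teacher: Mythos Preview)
Your proposal is correct and follows essentially the same approach as the paper, which simply notes that the result follows immediately from \ref{mnd_morph_interp} and \ref{teqns_that_hold_in_every_talg}. Your write-up is just a careful unpacking of that one-line argument.
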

\begin{proof}
This follows immediately from \ref{mnd_morph_interp} and \ref{teqns_that_hold_in_every_talg}. 
\end{proof}

\section{Systems of diagrammatic equations on \texorpdfstring{$\J$}{J}-monadic \texorpdfstring{$\V$}{V}-categories over \texorpdfstring{$\C$}{C}}\label{sec:sys_eqns}

\begin{para}
\label{param_Jary_operation}
A \textbf{parametrized $\J$-ary operation} is a parametrized operation whose arity $J$ is an object of the given subcategory of arities $\J$.  Moreover, we add the word \textit{$\J$-ary} to the various terms in \ref{param_op}, \ref{defn:param_ops_on_vfunctor}, \ref{param_tterm} to signify that the arity $J$ is an object of $\J$, thus arriving at the notions of \textbf{diagrammatic $\J$-ary equation}, \textbf{parametrized $\J$-ary $\T$-term}, and \textbf{parametrized $\J$-ary $\T$-equation}.\qed
\end{para}

\begin{defn}
\label{diagrammatic_Jary_eqn_on_g}
Let $G:\A \rightarrow \C$ be a strictly $\J$-monadic $\V$-functor, so that $\A$ may be regarded as a strictly $\J$-monadic $\V$-category over $\C$.  A \textbf{system of diagrammatic $\J$-ary equations on $G$} (or \textbf{on $\A$}) is a small family $\sfE = (\omega_\delta \doteq \nu_\delta)_{\delta \in \cD}$ of diagrammatic $\J$-ary equations $\omega_\delta, \nu_\delta : \C(J_\delta, G-) \rightrightarrows \C(C_\delta, G-)$ on $G$, indexed by a small set $\cD$. An \textbf{$\sfE$-model} (or a \textbf{model of $\sfE$}) is then an object $A$ of $\A$ that satisfies every equation $\omega_\delta \doteq \nu_\delta$ ($\delta \in \cD$). We write $\A_\sfE$ for the full sub-$\V$-category of $\A$ consisting of the $\sfE$-models, and we regard $\A_\sfE$ as a $\V$-category over $\C$ by restricting $G$ to $\A_\sfE$. \qed
\end{defn}

\noindent We shall show in \ref{Jary_eqn_monadic_prop} that $\A_\sfE$ is strictly $\J$-monadic over $\C$ in the situation of \ref{diagrammatic_Jary_eqn_on_g}.  But in order to do this, we first consider the monad-based counterpart of the concept in \ref{diagrammatic_Jary_eqn_on_g}: 

\begin{defn}\label{defn_ff_sys_jary_eqns}
Given a $\J$-ary $\V$-monad $\T$, a \textbf{system of parametrized $\J$-ary equations} over $\T$ is a small family $\sfE = (t_\delta \doteq u_\delta)_{\delta \in \cD}$ of parametrized $\J$-ary $\T$-equations $t_\delta \doteq u_\delta:C_\delta \rightrightarrows TJ_\delta$ (indexed by a small set $\cD$).  A \textbf{system of parametrized $\J$-ary equations} is a pair $(\T,\sfE)$ consisting of a $\J$-ary $\V$-monad $\T$ and a system of parametrized $\J$-ary equations $\sfE$ over $\T$.  A \textbf{$(\T,\sfE)$-algebra} is a $\T$-algebra $A$ that satisfies every equation in $\sfE$. We write $(\T,\sfE)\Alg$ to denote the full sub-$\V$-category of $\T\Alg$ consisting of the $(\T,\sfE)$-algebras, and we regard $(\T,\sfE)\Alg$ as a $\V$-category over $\C$ by means of the $\V$-functor $(\T,\sfE)\Alg \rightarrow \C$ obtained as a restriction of $U^\T$.\qed
\end{defn}

\begin{para}\label{bij_diag_ff}
Let $G:\A \rightarrow \C$ be a strictly $\J$-monadic $\V$-functor, so that $\A$ is a strictly $\J$-monadic $\V$-category over $\C$, and let $\T$ be the induced $\J$-ary $\V$-monad on $\C$.  In view of \ref{interp_iso}, systems of diagrammatic $\J$-ary equations $\sfE$ on $\A$ are in bijective correspondence with systems of parametrized $\J$-ary equations $\sfE$ over $\T$, so that we may identify these concepts. With this identification, $\A_\sfE \cong (\T,\sfE)\Alg$ in $\V\CAT/\C$, because $\A \cong \T\Alg$ and it is immediate from \ref{interp_iso} and \ref{interp_in_talg} that an object $A$ of $\A$ satisfies a given diagrammatic equation $\omega \doteq \nu$ on $\A$ iff the associated $\T$-algebra $(GA,a_A)$ satisfies the corresponding parametrized $\T$-equation, with the notation of \ref{interp_iso}. Hence, in order to show that $\A_\sfE$ is strictly $\J$-monadic over $\C$, it suffices to show that $(\T,\sfE)\Alg$ is so, and for this we shall need the following material. \qed
\end{para}

\begin{para}\label{interp_morph_vs_interp_of_t}
Given a $\J$-ary $\V$-monad $\T$ on $\C$ and an object $J$ of $\J$, the identity morphism $1_{TJ}$ may be regarded as a parametrized $\J$-ary $\T$-term with arity $J$ and parameter $TJ$, so for each $\T$-algebra $A = (A,a)$ we obtain a parametrized $\J$-ary operation $\llb 1_{TJ}\rrb_A = \C(TJ,a) \circ T_{JA}:\C(J,A) \to \C(TJ,A)$, which may be described also as the transpose of the interpretation morphism $\ii^A_J:TJ \rightarrow [\C(J,A),A]$ of \ref{algebrasforpresentation}.  Given any parametrized $\J$-ary $\T$-term $t:C \to TJ$, the interpretation $\llb t\rrb_A = \C(t,a) \circ T_{JA}:\C(J,A) \rightarrow \C(C,A)$ is therefore the transpose of the composite $\ii^A_J \circ t:C \rightarrow [\C(J,A),A]$.  Hence, given any parametrized $\J$-ary $\T$-equation $t \doteq u:C \rightrightarrows TJ$, $A \vDash t \doteq u$ iff $\ii^A_J \circ t = \ii^A_J \circ u$.  It follows that if $(f_\lambda:C_\lambda \rightarrow C)_{\lambda \in \Lambda}$ is a jointly epimorphic family of morphisms in $\C$, then $A \vDash t \doteq u$ iff $A \vDash t \circ f_\lambda \doteq u \circ f_\lambda$ for all $\lambda \in \Lambda$. \qed
\end{para}

\begin{para}\label{sys_jary_eqs_as_ff}
Given a traditional system of $\J$-ary equations $E = (t,u:\Gamma \rightrightarrows \T)$ over a $\J$-ary $\V$-monad $\T$ (\ref{presentations}), we may regard $E$ as a system of parametrized $\J$-ary equations $\sfE = (t_J \doteq u_J:\Gamma J \rightrightarrows TJ)_{J \in \ob\J}$ over $\T$, and it follows immediately from \ref{interp_morph_vs_interp_of_t} that $(\T,\sfE)\Alg = (\T,E)\Alg$ as $\V$-categories over $\C$.  In the other direction, every system of parametrized $\J$-ary equations determines a traditional system of $\J$-ary equations, as follows:
\end{para}

\begin{prop}\label{free_form_to_trad}
Let $\sfE = \left(t_\delta \doteq u_\delta : C_\delta \rightrightarrows TJ_\delta\right)_{\delta \:\in\: \cD}$ be a system of parametrized $\J$-ary equations over a $\J$-ary $\V$-monad $\T$.  Then there is a traditional system of $\J$-ary equations $E = \left(\tilde{t}, \tilde{u} : \Gamma \rightrightarrows \T\right)$ over $\T$ such that $(\T,\sfE)\Alg = (\T,E)\Alg$ as $\V$-categories over $\C$.
\end{prop}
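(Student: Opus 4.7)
The plan is to assemble the small family $\sfE$ into a $\J$-signature morphism by taking coproducts in $\C$ along the fibers of the arity assignment $\delta \mapsto J_\delta$. For each $J \in \ob\J$, let $\cD_J = \{\delta \in \cD \mid J_\delta = J\}$, which is small since $\cD$ is, and set $\Gamma J = \coprod_{\delta \in \cD_J} C_\delta$; this exists because $\C$ is cocomplete (reducing to the initial object when $\cD_J$ is empty). This defines a $\J$-signature $\Gamma$, and the universal property of the coproduct yields a parallel pair of $\J$-signature morphisms $\tilde{t}, \tilde{u} : \Gamma \rightrightarrows \T$ with components $\tilde{t}_J, \tilde{u}_J : \Gamma J \rightrightarrows TJ$ determined by $\tilde{t}_J \circ \inj_\delta = t_\delta$ and $\tilde{u}_J \circ \inj_\delta = u_\delta$ for each $\delta \in \cD_J$.

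To verify that $(\T,\sfE)\Alg = (\T,E)\Alg$ as $\V$-categories over $\C$, it suffices to check that they have the same objects, since both are full sub-$\V$-categories of $\T\Alg$ with the evidently matching $\V$-functors to $\C$. By \ref{interp_morph_vs_interp_of_t}, a $\T$-algebra $A$ lies in $(\T,\sfE)\Alg$ iff $\ii^A_{J_\delta} \circ t_\delta = \ii^A_{J_\delta} \circ u_\delta$ for every $\delta \in \cD$, and by \ref{algebrasforpresentation} (recalling \ref{sys_jary_eqs_as_ff}) it lies in $(\T,E)\Alg$ iff $\ii^A_J \circ \tilde{t}_J = \ii^A_J \circ \tilde{u}_J$ for every $J \in \ob\J$. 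Since the coproduct injections $(\inj_\delta)_{\delta \in \cD_J}$ are jointly epimorphic in $\C$, the closing observation of \ref{interp_morph_vs_interp_of_t} reduces the latter condition to $\ii^A_J \circ \tilde{t}_J \circ \inj_\delta = \ii^A_J \circ \tilde{u}_J \circ \inj_\delta$ for all $J \in \ob\J$ and all $\delta \in \cD_J$, which by the defining property of $\tilde{t}_J, \tilde{u}_J$ coincides with the previous condition.

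I do not anticipate a substantive obstacle: the real content has been distilled into \ref{interp_morph_vs_interp_of_t}, which both reformulates satisfaction via the interpretation morphisms $\ii^A_J$ and supplies the joint-epi observation that licenses bundling many parametrized equations of fixed arity into a single one whose parameter is a coproduct. The remainder is a bookkeeping argument that depends only on cocompleteness of $\C$ and smallness of $\cD$, both of which are in force.
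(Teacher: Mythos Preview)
Your proof is correct and follows essentially the same approach as the paper: define $\Gamma J$ as the coproduct of the $C_\delta$ over the fiber $\cD_J$, induce $\tilde{t}_J,\tilde{u}_J$ from the coproduct, and invoke \ref{interp_morph_vs_interp_of_t} together with the joint epimorphicity of the coproduct injections. The paper's proof is terser but otherwise identical in content.
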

\begin{proof}
Define a traditional $\J$-signature $\Gamma$ by declaring that $\Gamma J = \coprod_{\delta \in \cD,\:J_\delta = J} C_\delta$ for each $J \in \ob\J$, and define $\tilde{t}_J,\tilde{u}_J:\Gamma J \rightrightarrows TJ$ to be the unique morphisms such that $\tilde{t}_J \circ \iota_\delta = t_\delta$ and $\tilde{u}_J \circ \iota_\delta = u_\delta$ for all $\delta \in \cD$ with $J_\delta = J$, where $\iota_\delta:C_\delta \to \Gamma J$ is the coproduct insertion.  The result now follows readily in view of \ref{interp_morph_vs_interp_of_t}, since the $\iota_\delta$ are jointly epimorphic.
\end{proof}

\begin{theo}\label{quot_by_ff_sys_eqns}
Let $(\T,\sfE)$ be a system of parametrized $\J$-ary equations.  Then there is a $\J$-ary $\V$-monad $\T/\sfE$ such that $(\T/\sfE)\Alg \cong (\T,\sfE)\Alg$ in $\V\CAT/\C$.
\end{theo}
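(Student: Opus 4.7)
The plan is to reduce the statement to the already-developed theory of traditional systems of $\J$-ary equations by invoking Proposition \ref{free_form_to_trad}. That proposition converts any parametrized system $\sfE$ into a traditional system $E = (\tilde t, \tilde u : \Gamma \rightrightarrows \T)$ with the property that the two resulting $\V$-categories of algebras coincide on the nose as objects of $\V\CAT/\C$. Since all of the hard work of constructing algebraic quotients of $\J$-ary $\V$-monads has already been done in the background (see \ref{presentations} and \ref{algebrasforpresentation}), what remains is essentially a matter of assembling the pieces.

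Concretely, first I would invoke \ref{free_form_to_trad} to obtain a traditional system of $\J$-ary equations $E = (\tilde t, \tilde u : \Gamma \rightrightarrows \T)$ over $\T$ such that $(\T,\sfE)\Alg = (\T,E)\Alg$ as $\V$-categories over $\C$. Next I would appeal to the construction of the quotient $\T/E$ recalled in \ref{presentations}, which is defined as the algebraic coequalizer (in $\Mnd_{\underJ}(\C)$) of the parallel pair $\tilde t^\sharp, \tilde u^\sharp : \T_\Gamma \rightrightarrows \T$ induced from $\tilde t, \tilde u$ through the adjunction between $\Mnd_{\underJ}(\C)$ and $\JSig(\C)$; this algebraic coequalizer exists by \ref{algebraiccolimits}. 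Setting $\T/\sfE := \T/E$, the isomorphism $(\T/E)\Alg \cong (\T,E)\Alg$ in $\V\CAT/\C$ recalled in \ref{algebrasforpresentation} (from \cite[10.2.13]{Pres}) then yields
\[
(\T/\sfE)\Alg \;=\; (\T/E)\Alg \;\cong\; (\T,E)\Alg \;=\; (\T,\sfE)\Alg
\]
in $\V\CAT/\C$, as required.

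There is no real obstacle, since all of the substantive monadicity and coequalizer results have already been established in \cite{Pres} and quoted in \S\ref{background}. The only subtle point is ensuring that the identification in \ref{free_form_to_trad} is an equality of $\V$-categories over $\C$ rather than a mere isomorphism, so that composing it with the isomorphism from \ref{algebrasforpresentation} still produces an isomorphism in $\V\CAT/\C$; this is already guaranteed by the statement of \ref{free_form_to_trad} and the joint epimorphicity argument via \ref{interp_morph_vs_interp_of_t} used in its proof.
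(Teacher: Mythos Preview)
Your proposal is correct and follows essentially the same approach as the paper: invoke \ref{free_form_to_trad} to obtain a traditional system $E$ with $(\T,\sfE)\Alg = (\T,E)\Alg$, set $\T/\sfE := \T/E$, and then apply the isomorphism $(\T/E)\Alg \cong (\T,E)\Alg$ from \ref{algebrasforpresentation}. The paper's proof is just a one-sentence compression of exactly this chain.
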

\begin{proof}
Taking $E$ as in \ref{free_form_to_trad} and letting $\T/\sfE := \T/E$ with the notation of \ref{algebrasforpresentation}, this follows from  \ref{free_form_to_trad} and \ref{algebrasforpresentation}. 
\end{proof}

\begin{theo}
\label{Jary_eqn_monadic_prop}
Let $\sfE$ be a system of diagrammatic $\J$-ary equations on a strictly $\J$-monadic $\V$-category $\A$ over $\C$. Then $\A_\sfE$ is a strictly $\J$-monadic $\V$-category over $\C$.
\end{theo}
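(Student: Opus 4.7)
The plan is to reduce this statement to the previously established results \ref{bij_diag_ff} and \ref{quot_by_ff_sys_eqns} by translating the diagrammatic data on $\A$ into monad-based data on an associated $\J$-ary $\V$-monad.

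First, since $\A$ is strictly $\J$-monadic over $\C$, there is a $\J$-ary $\V$-monad $\T$ on $\C$ together with an isomorphism $\A \cong \T\Alg$ in $\V\CAT/\C$. Through this isomorphism the system $\sfE$ of diagrammatic $\J$-ary equations on $\A$ transports to a system of diagrammatic $\J$-ary equations on the forgetful $\V$-functor $U^\T : \T\Alg \to \C$, and by \ref{bij_diag_ff} this data is equivalent to a system $\sfE'$ of parametrized $\J$-ary equations over $\T$, with $\A_\sfE \cong (\T,\sfE')\Alg$ in $\V\CAT/\C$.

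Next, I would invoke \ref{quot_by_ff_sys_eqns} to produce a $\J$-ary $\V$-monad $\T/\sfE'$ on $\C$ satisfying $(\T/\sfE')\Alg \cong (\T,\sfE')\Alg$ in $\V\CAT/\C$. Composing the two isomorphisms then yields $\A_\sfE \cong (\T/\sfE')\Alg$ in $\V\CAT/\C$, exhibiting $\A_\sfE$ as strictly $\J$-monadic over $\C$ by way of the $\J$-ary $\V$-monad $\T/\sfE'$, as desired.

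The argument is thus essentially a bookkeeping exercise, since the substantive technical work has already been carried out in the preceding results: the bijective correspondence in \ref{bij_diag_ff} (which relies on \ref{interp_iso} and \ref{interp_in_talg}) and the construction of the quotient $\J$-ary $\V$-monad in \ref{quot_by_ff_sys_eqns} (which passes through \ref{free_form_to_trad} and \ref{algebrasforpresentation}). The only point requiring any care is to ensure that all isomorphisms are taken in $\V\CAT/\C$ rather than merely as $\V$-categories, so that the composite isomorphism is compatible with the forgetful $\V$-functors to $\C$; this is immediate from the fact that each of the component isomorphisms is explicitly stated in $\V\CAT/\C$ in the cited results.
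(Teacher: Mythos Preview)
Your proposal is correct and follows essentially the same route as the paper's proof, which simply cites \ref{quot_by_ff_sys_eqns} together with the discussion in \ref{bij_diag_ff}. The only cosmetic difference is that the paper identifies $\sfE$ directly with the corresponding system of parametrized $\J$-ary equations over $\T$ (rather than introducing a new name $\sfE'$), but the logical content is identical.
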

\begin{proof}
This follows from \ref{quot_by_ff_sys_eqns}, by the discussion in \ref{bij_diag_ff}.
\end{proof}

\begin{rmk}\label{ae_jary_monadic}
In the situation of \ref{Jary_eqn_monadic_prop}, if we write $\T$ for the $\J$-ary $\V$-monad induced by $\A$ (i.e., by the associated $\V$-adjunction $F \dashv G$, \ref{Jary}), then the $\J$-ary $\V$-monad induced by $\A_\sfE$ is $\T/\sfE$ (in view of \ref{Jary}, \ref{bij_diag_ff}, and the proof of \ref{Jary_eqn_monadic_prop}), and there is an associated regular epimorphism $q:\T \rightarrow \T/\sfE$ in $\Mnd_{\underJ}(\C)$ (by \ref{algebrasforpresentation} and the proof of \ref{quot_by_ff_sys_eqns}). 
\end{rmk}

\section{Free-form and diagrammatic $\J$-presentations}
\label{freeform}

We now study objects equipped with parametrized operations whose arities and parameters are specified by a \textit{signature} of the following kind:

\begin{defn}
\label{freeformsignature}
A \textbf{free-form $\J$-signature} is a small set $\cS$ equipped with an assignment to each $\sigma \in \cS$ a pair of objects $J_\sigma,C_\sigma$ with $J_\sigma \in \ob\J$ and $C_\sigma \in \ob\C$.  We call each element $\sigma \in \cS$ an \textbf{operation symbol} with \textbf{arity} $J_\sigma$ and \textbf{parameter} $C_\sigma$.  We denote such a free-form $\J$-signature simply by $\cS$, and we then call $\cS$ a \textbf{free-form $\J$-signature with arities $J_\sigma$ and parameters $C_\sigma$ $(\sigma \in \cS)$}. \qed
\end{defn}

\begin{defn}
\label{ff_signature_algebra}
Given a free-form $\J$-signature $\cS$, with arities $J_\sigma$ and parameters $C_\sigma$ $(\sigma \in \cS)$, an \textbf{$\cS$-algebra} is an object $A$ of $\C$ equipped with parametrized operations $\sigma^A : \C(J_\sigma, A) \to \C(C_\sigma, A)$ for all the operation symbols $\sigma \in \cS$. An $\cS$-algebra is equivalently an object $A$ of $\C$ with a family of morphisms $\sigma^A:\C(J_\sigma,A) \otimes C_\sigma \rightarrow A$ in $\C$ $(\sigma \in \cS)$, or equivalently, a family of morphisms $\sigma^A:C_\sigma \rightarrow [\C(J_\sigma,A),A]$ in $\C$.  In the special case where $\C = \V$, we may also write $\sigma^A : C_\sigma \tensor \V(J_\sigma, A) \to A$.  Given $\cS$-algebras $A$ and $B$, an \textbf{$\cS$-homomorphism} $f:A \to B$ is a morphism in $\C$ that makes the following diagram commute for each $\sigma \in \cS$:
\[\begin{tikzcd}
	{\C(J_\sigma, A)} && {\C(C_\sigma, A)} \\
	\\
	{\C(J_\sigma, B)} && {\C(C_\sigma, B)}.
	\arrow["{\sigma^A}", from=1-1, to=1-3]
	\arrow["{\C(J_\sigma, f)}"', from=1-1, to=3-1]
	\arrow["{\sigma^B}"', from=3-1, to=3-3]
	\arrow["{\C(C_\sigma, f)}", from=1-3, to=3-3]
\end{tikzcd}\]
We thus have an ordinary category $\cS\Alg_0$ of $\cS$-algebras and $\cS$-homomorphisms, which underlies a $\V$-category $\cS\Alg$ in which each hom-object $\cS\Alg(A,B)$ is defined as the \textit{pairwise equalizer} \cite[2.1]{Commutants} of the following $\cS$-indexed family of parallel pairs in $\V$ $(\sigma \in \cS)$:
$$\C(A,B) \xrightarrow{\C(J_\sigma,-)_{AB}} \V(\C(J_\sigma,A),\C(J_\sigma,B)) \xrightarrow{\V(1,\sigma^B)} \V(\C(J_\sigma,A),\C(C_\sigma,B)),$$
$$\C(A,B) \xrightarrow{\C(C_\sigma,-)_{AB}} \V(\C(C_\sigma,A),\C(C_\sigma,B)) \xrightarrow{\V(\sigma^A,1)} \V(\C(J_\sigma,A),\C(C_\sigma,B))\;.$$
Composition in $\cS\Alg$ is defined in the unique way that enables the resulting subobjects $U^\cS_{AB}:\cS\Alg(A,B) \hookrightarrow \C(A,B)$ $(A,B \in \cS\Alg)$ to serve as the structural morphisms of a faithful $\V$-functor $U^\cS:\cS\Alg \rightarrow \C$ that sends each $\cS$-algebra $A$ to its carrier object $A \in \ob\C$. \qed   
\end{defn}

\begin{defn}
\label{Xparam_Salg}
Let $\cS$ be a free-form $\J$-signature. Given a $\V$-category $\X$, an \textbf{$\X$-pa\-ra\-me\-tri\-zed $\cS$-algebra} is a $\V$-functor $A : \X \to \C$ equipped with parametrized operations $\sigma^A : \C(J_\sigma, A-) \to \C(C_\sigma, A-)$ \eqref{defn:param_ops_on_vfunctor} for all $\sigma \in \cS$.  Writing $\cS\Alg[\X]$ to denote the (large) set of all $\X$-parametrized $\cS$-algebras, we obtain an evident functor $\cS\Alg[-] : \V\CAT^\op \to \mathsf{SET}$ given on objects by $\X \mapsto \cS\Alg[\X]$. \qed 
\end{defn}

\begin{para}
\label{SAlg_representing_object}
Let $\cS$ be a free-form $\J$-signature.  It readily follows from the definition of $U^\cS : \cS\Alg \to \C$ \eqref{ff_signature_algebra} that for each operation symbol $\sigma \in \cS$, the parametrized operations $\sigma^A : \C(J_\sigma, A) \to \C(C_\sigma, A)$ for $\cS$-algebras $A$ constitute a parametrized operation $\sigma^{U^\cS} : \C\left(J_\sigma, U^\cS-\right) \to \C\left(C_\sigma, U^\cS-\right)$ on the $\V$-functor $U^\cS$. Thus, the $\V$-functor $U^\cS : \cS\Alg \to \C$ carries the structure of an $\cS\Alg$-parametrized $\cS$-algebra. It is now straightforward to verify the following result, which characterizes $\cS\Alg$ uniquely up to isomorphism:

\begin{prop}
\label{universal_property_SAlg}
Let $\cS$ be a free-form $\J$-signature. Then $\cS\Alg$ is a representing object for the functor $\cS\Alg[-] : \V\CAT^\op \to \SET$, and the counit of this representation (in the sense of \cite[\S 1.10]{Kelly}) is $U^\cS : \cS\Alg \to \C$. Thus, there is a bijective correspondence, natural in $\X \in \V\CAT$, between $\V$-functors $\X \to \cS\Alg$ and $\X$-parametrized $\cS$-algebras. \qed 
\end{prop}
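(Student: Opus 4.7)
The plan is to exhibit a bijection $\V\CAT(\X,\cS\Alg) \cong \cS\Alg[\X]$ natural in $\X \in \V\CAT^\op$, so that the universal element of the representation (corresponding to $1_{\cS\Alg}$) is precisely the $\cS\Alg$-parametrized $\cS$-algebra structure on $U^\cS$ described in \ref{SAlg_representing_object}. In one direction, given a $\V$-functor $F : \X \to \cS\Alg$, I send $F$ to $U^\cS F : \X \to \C$ equipped with the whiskered parametrized operations $\sigma^{U^\cS F} := \sigma^{U^\cS} F$ for each $\sigma \in \cS$. In the other direction, given an $\X$-parametrized $\cS$-algebra $(A : \X \to \C, (\sigma^A)_{\sigma \in \cS})$, I construct a lift $\widetilde{A} : \X \to \cS\Alg$ against $U^\cS$, completing the correspondence.

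The construction of $\widetilde{A}$ proceeds as follows. On objects, I let $\widetilde{A}x$ be the $\cS$-algebra with carrier $Ax$ and structural morphisms $\sigma^{\widetilde{A}x} := (\sigma^A)_x : \C(J_\sigma,Ax) \to \C(C_\sigma,Ax)$ for each $\sigma \in \cS$. On hom-objects, I need the structural morphism $A_{xy} : \X(x,y) \to \C(Ax,Ay)$ to factor through the subobject $U^\cS_{Ax,Ay} : \cS\Alg(Ax,Ay) \hookrightarrow \C(Ax,Ay)$. The key step is that such a factorization indeed exists. By construction in \ref{ff_signature_algebra}, $\cS\Alg(Ax,Ay)$ is the pairwise equalizer of the $\sigma$-indexed pairs of parallel morphisms in $\V$, and the required equalization condition for each $\sigma \in \cS$ is precisely the commutativity of the $\V$-naturality square of $\sigma^A : \C(J_\sigma,A-) \to \C(C_\sigma,A-)$ at $(x,y)$, once one unpacks the definitions of $\C(J_\sigma,-)_{Ax,Ay}$ and $\C(C_\sigma,-)_{Ax,Ay}$. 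This gives a unique lift $\widetilde{A}_{xy} : \X(x,y) \to \cS\Alg(Ax,Ay)$, and faithfulness of $U^\cS$ then forces preservation of composition and identities, so $\widetilde{A}$ is a $\V$-functor.

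The two assignments are mutually inverse by routine checks: uniqueness of the factorization combined with faithfulness of $U^\cS$ yields $\widetilde{U^\cS F} = F$, while by construction $U^\cS \widetilde{A} = A$ both on objects and hom-objects, and the whiskered operations $\sigma^{U^\cS \widetilde{A}}$ agree with $\sigma^A$ because their components at each $x$ coincide with $\sigma^{\widetilde{A}x} = (\sigma^A)_x$ by definition. Naturality in $\X$ of the bijection is immediate, since both directions are defined by composition and whiskering. The only non-routine ingredient is the hom-level factorization in the construction of $\widetilde{A}$; this is the step where the equalizer definition of $\cS\Alg(Ax,Ay)$ and the $\V$-naturality of the operations $\sigma^A$ must be matched up carefully, but once the pairwise equalizer in \ref{ff_signature_algebra} is unwound, the two conditions become literally the same diagram in $\V$.
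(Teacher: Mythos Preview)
Your proof is correct and is precisely the straightforward verification the paper intends: the paper itself gives no proof, merely stating that the result is ``straightforward to verify'' and marking it with a \qed. The argument you give---constructing the lift $\widetilde{A}$ by using $\V$-naturality of each $\sigma^A$ to factor through the pairwise equalizer defining $\cS\Alg(Ax,Ay)$, and using faithfulness of $U^\cS$ for functoriality and uniqueness---is exactly the expected unpacking.
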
   
\end{para}

\begin{para}\label{trads_det_ffs}
Every traditional $\J$-signature $\Sigma$ (\ref{presentations}) determines a (rather special) free-form $\J$-signature $\cS_\Sigma$ that is defined by declaring that the set of operation symbols underlying $\cS_\Sigma$ is $\ob\J$ and that each operation symbol $J \in \ob\J$ has arity $J$ and parameter $\Sigma J$.  With this notation, it is straightforward to verify that $\Sigma\Alg \cong \cS_\Sigma\Alg$ in $\V\CAT/\C$.  In fact, if we formulate $\Sigma$-algebras equivalently in terms of structure morphisms of the form $\alpha_J:\C(J,A) \rightarrow \C(\Sigma J,A)$ as mentioned in \ref{algebrasforpresentation}, then in fact $\Sigma\Alg = \cS_\Sigma\Alg$ as $\V$-categories over $\C$.\qed
\end{para} 

\begin{para}\label{sig_det_by_ffsig}
In the opposite direction from \ref{trads_det_ffs}, given a free-form $\J$-signature $\cS$ with arities $J_\sigma$ and parameters $C_\sigma$ $(\sigma \in \cS)$, we can construct a traditional $\J$-signature $\Sigma_\cS : \ob\J \to \C$ as follows: for each $J \in \ob\J$, we set $\Sigma_\cS J := \coprod_{\sigma \in \cS,  J_\sigma = J} C_\sigma$. It then follows straightforwardly that $\Sigma_\cS$-algebras (in the sense of \ref{signatures}) are in bijective correspondence with $\cS$-algebras, and that we have an isomorphism of $\V$-categories $\cS\Alg \cong \Sigma_\cS\Alg$ in $\V\CAT/\C$. To prove the latter claim, note that $\Sigma_\cS\Alg = \cS_{\Sigma_\cS}\Alg$ by \ref{trads_det_ffs}, so that by \ref{universal_property_SAlg} it suffices to show that we have a bijective correspondence, natural in $\X \in \V\CAT$, between $\X$-parametrized $\cS$-algebras and $\X$-parametrized $\cS_{\Sigma_\cS}$-algebras, and this readily follows from the definitions. (Note that $\X$-parametrized $\cS_{\Sigma_\cS}$-algebras can reasonably be called \textit{$\X$-parametrized $\Sigma_\cS$-algebras}.) \qed
\end{para}

\begin{theo}\label{salg_jmonadic}
Given a free-form $\J$-signature $\cS$, the $\V$-category of $\cS$-algebras $\cS\Alg$ is strictly $\J$-monadic over $\C$, i.e., there is a $\J$-ary $\V$-monad $\T_\cS$ on $\C$ such that $\cS\Alg \cong \T_\cS\Alg$ in $\V\CAT/\C$.
\end{theo}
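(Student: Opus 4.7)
The plan is to reduce immediately to the traditional $\J$-signature case, which has already been handled in the paper \cite{Pres}. Specifically, in \ref{sig_det_by_ffsig} we constructed, from the free-form $\J$-signature $\cS$, a traditional $\J$-signature $\Sigma_\cS : \ob\J \to \C$ given on objects by $\Sigma_\cS J = \coprod_{\sigma \in \cS,\; J_\sigma = J} C_\sigma$, and we established an isomorphism $\cS\Alg \cong \Sigma_\cS\Alg$ in $\V\CAT/\C$.

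The next step is to invoke \ref{signatures}, which tells us that the forgetful functor $\U : \Mnd_{\underJ}(\C) \to \JSig(\C)$ has a left adjoint, and that the free $\J$-ary $\V$-monad $\T_{\Sigma_\cS}$ on the $\J$-signature $\Sigma_\cS$ satisfies $\T_{\Sigma_\cS}\Alg \cong \Sigma_\cS\Alg$ in $\V\CAT/\C$. Define $\T_\cS := \T_{\Sigma_\cS}$. Composing the two isomorphisms then yields $\cS\Alg \cong \T_\cS\Alg$ in $\V\CAT/\C$, as required.

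There is essentially no obstacle, as the two ingredients are already in place: the comparison between free-form and traditional signatures was established in \ref{sig_det_by_ffsig}, and the strict $\J$-monadicity of $\Sigma\Alg$ for any traditional $\J$-signature $\Sigma$ was proved in \cite{Pres} and recalled in \ref{signatures}. The entire content of the theorem is the composition of these two facts, and the proof amounts to no more than citing them in sequence.
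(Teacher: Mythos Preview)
Your proposal is correct and follows exactly the same approach as the paper: define $\T_\cS := \T_{\Sigma_\cS}$ and chain the isomorphisms $\cS\Alg \cong \Sigma_\cS\Alg \cong \T_\cS\Alg$ in $\V\CAT/\C$ using \ref{sig_det_by_ffsig} and \ref{signatures}.
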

\begin{proof}
Letting $\T_\cS := \T_{\Sigma_\cS}$ be the free $\J$-ary $\V$-monad on the traditional $\J$-signature $\Sigma_\cS$ (\ref{signatures}, \ref{sig_det_by_ffsig}), we find that $\cS\Alg \cong \Sigma_{\cS}\Alg \cong \T_\cS\Alg$ in $\V\CAT/\C$ by \ref{sig_det_by_ffsig} and \ref{signatures}.
\end{proof}

\begin{defn}\label{jary_vmnd_gend_by_ffsig}
We call the $\V$-monad $\T_\cS = \T_{\Sigma_\cS}$ in \ref{salg_jmonadic} the \textbf{$\J$-ary $\V$-monad generated by $\cS$}. \qed
\end{defn}

\begin{defn}\label{diagrammatic_seqn}
Let $\cS$ be a free-form $\J$-signature.  A \textbf{$\V$-natural $\cS$-operation} is a parametrized $\J$-ary operation on the $\V$-functor $U^\cS:\cS\Alg \rightarrow \C$.  Hence, a $\V$-natural $\cS$-operation $\omega$ with arity $J \in \ob\J$ and parameter $C \in \ob\C$ is precisely a family of morphisms
$$\omega_A\;:\;\C(J,A) \longrightarrow \C(C,A)\;\;\;\text{in $\V$}\;\;\;\;\;\;(A \in \cS\Alg)$$
that is $\V$-natural in $A \in \cS\Alg$, where we write simply $A$ to denote $U^\cS A$.  A \textbf{diagrammatic $\cS$-equation}, denoted by $\omega \doteq \nu$, is a diagrammatic $\J$-ary equation on $U^\cS$, i.e. a parallel pair of $\V$-natural $\cS$-operations
$$\omega_A,\nu_A\;:\;\C(J,A) \rightrightarrows \C(C,A)\;\;\;\;\;\;(A \in \cS\Alg)$$
with a specified arity $J \in \ob\J$ and a specified parameter $C \in \ob\C$.  In view of \ref{param_op}, a diagrammatic $\cS$-equation may be written equivalently in the form
$$\omega_A,\nu_A\;:\;\C(J,A) \otimes C \rightrightarrows A\;\;\;\;\;\;(A \in \cS\Alg).$$
With the terminology of \ref{defn:param_ops_on_vfunctor}, an $\cS$-algebra $A$ \textbf{satisfies} the diagrammatic $\cS$-equation $\omega \doteq \nu$ if $\omega_A = \nu_A$.  A \textbf{system of diagrammatic $\cS$-equations} is a system of diagrammatic $\J$-ary equations on $U^\cS$, i.e., a small family $\sfE = (\omega_\delta \doteq \nu_\delta)_{\delta \:\in\: \cD}$ of diagrammatic $\cS$-equations $\omega_\delta \doteq \nu_\delta:\C(J_\delta,U^\cS-) \rightrightarrows \C(C_\delta,U^\cS-)$, indexed by a small set $\cD$.  The index set $\cD$ then carries the structure of a free-form $\J$-signature with arities $J_\delta$ and parameters $C_\delta$ $(\delta \in \cD)$. \qed
\end{defn}

\begin{para}
\label{signature_operation_natural}
By \ref{SAlg_representing_object}, if $\cS$ is a free-form $\J$-signature, then each operation symbol $\sigma \in \cS$ determines a $\V$-natural $\cS$-operation $\sigma^A : \C(J_\sigma, A) \to \C(C_\sigma, A)$ $(A \in \cS\Alg$). We shall use this fact very often in the examples of \S\ref{presentationexamples}. \qed 
\end{para}

\begin{defn}
A \textbf{diagrammatic $\J$-presentation} is a pair $\cP = (\cS,\sfE)$ consisting of a free-form $\J$-signature $\cS$ and a system of diagrammatic $\cS$-equations $\sfE$.  A \textbf{$\cP$-algebra} is an $\cS$-algebra $A$ that satisfies every equation in $\sfE$. \qed
\end{defn}

\begin{defn}
\label{param_Seqn}
Let $\cS$ be a free-form $\J$-signature.  A \textbf{parametrized $\cS$-term} is a parametrized $\J$-ary $\T_\cS$-term $t:C \rightarrow T_\cS J$, where $\T_\cS$ is the $\J$-ary $\V$-monad generated by $\cS$.  A \textbf{parametrized $\cS$-equation} is a parametrized $\J$-ary $\T_\cS$-equation $t \doteq u$, i.e. a parallel pair $t,u:C \rightrightarrows T_\cS J$ with $J \in \ob\J$ and $C \in \ob\C$. \qed
\end{defn}

\begin{para}\label{interp_in_salg}
Applying \ref{defn:interp_in_talg} to the $\V$-functor $U^\cS:\cS\Alg \rightarrow \C$, we may consider the interpretation $\llb t \rrb_A:\C(J,A) \to \C(C,A)$ of a parametrized $\cS$-term $t:C \rightarrow T_\cS J$ in an $\cS$-algebra $A$, and we may ask whether $A$ satisfies a parametrized $\cS$-equation $t \doteq u:C \rightrightarrows T_\cS J$, in which case we write $A \models t \doteq u$. \qed
\end{para}

\begin{defn}
\label{ff_Jpres}
A \textbf{free-form $\J$-presentation} is a pair $\cP = (\cS,\sfE)$ consisting of a free-form $\J$-signature $\cS$ and a system of parametrized $\J$-ary equations $\sfE$ over the $\J$-ary $\V$-monad $\T_\cS$ generated by $\cS$ (i.e., a small family of parametrized $\cS$-equations).  A \textbf{$\cP$-algebra} is an $\cS$-algebra $A$ that satisfies every equation in $\sfE$. \qed
\end{defn}

\begin{rmk}
\label{bijection_diag_ff}
By \ref{bij_diag_ff}, there is a bijection between diagrammatic $\J$-presentations and free-form $\J$-presentations, so that we may identify these concepts. \qed
\end{rmk}

\begin{defn}\label{algs_for_free_form_pres}
Given a free-form or diagrammatic $\J$-presentation $\mathcal{P} = (\cS,\sfE)$, we write $\cP\Alg$ to denote the full sub-$\V$-category of $\cS\Alg$ consisting of the $\cP$-algebras, and we regard $\cP\Alg$ as a $\V$-category over $\C$ by means of the $\V$-functor $U^\cP:\cP\Alg \rightarrow \C$ obtained as a restriction of $U^\cS$. \qed
\end{defn}

\begin{rmk}\label{palg_arbitrary_j}
The notions of free-form $\J$-signature $\cS$ and diagrammatic $\J$-presentation $\cP$ can be defined relative to an arbitrary full sub-$\V$-category $\J \hookrightarrow \C$, without the assumptions in \ref{Vcategoryassumptions} and \ref{subcategoryaritiesassumptions}, as can the notion of $\cP$-algebra and the $\V$-category of $\cP$-algebras, $\cP\Alg$.  We shall make use of this observation in \S\ref{standardizedpresentations}. \qed
\end{rmk}

\begin{rmk}
Given a diagrammatic $\J$-presentation $\cP = (\cS,\sfE)$, $\cP$-algebras are precisely $\sfE$-models for the system of diagrammatic $\J$-ary equations $\sfE$ on $\cS\Alg$, and moreover $\cP\Alg = \cS\Alg_\sfE$ as $\V$-categories over $\C$, so Theorems \ref{Jary_eqn_monadic_prop} and \ref{salg_jmonadic} (together with \ref{ae_jary_monadic}) entail the following:
\end{rmk}

\begin{theo}\label{vmnd_pres_by_ff_jpres}
Let $\cP = (\cS,\sfE)$ be a free-form or diagrammatic $\J$-presentation.  Then the $\V$-category of $\cP$-algebras $\cP\Alg$ is strictly $\J$-monadic over $\C$, i.e., there is a $\J$-ary $\V$-monad $\T_\cP$ such that $\T_\cP\Alg \cong \cP\Alg$ in $\V\CAT/\C$. Explicitly, $\T_\cP = \T_\cS/\sfE$.\qed
\end{theo}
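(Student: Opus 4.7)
The proof is essentially assembled from results already established in the paper, so my plan is to stitch them together carefully. First, I would observe that by Theorem \ref{salg_jmonadic}, the $\V$-category $\cS\Alg$ is strictly $\J$-monadic over $\C$, with associated $\J$-ary $\V$-monad $\T_\cS$ (the free $\J$-ary $\V$-monad on $\Sigma_\cS$). Thus $\cS\Alg$ qualifies as a strictly $\J$-monadic $\V$-category over $\C$ in the sense of Definition \ref{diagrammatic_Jary_eqn_on_g}, and the system $\sfE$ of diagrammatic $\cS$-equations is by definition a system of diagrammatic $\J$-ary equations on $U^\cS:\cS\Alg \to \C$.

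Next, I would invoke the definitional equality $\cP\Alg = \cS\Alg_\sfE$ as $\V$-categories over $\C$: both have the same objects (namely $\cS$-algebras $A$ with $\omega_A = \nu_A$ for every $(\omega \doteq \nu) \in \sfE$), and both inherit their hom-objects and their forgetful $\V$-functor to $\C$ as full sub-$\V$-categories of $\cS\Alg$. Once this identification is in place, Theorem \ref{Jary_eqn_monadic_prop} applies directly to $\cS\Alg_\sfE = \cP\Alg$ and yields that $\cP\Alg$ is strictly $\J$-monadic over $\C$, producing some $\J$-ary $\V$-monad $\T_\cP$ with $\T_\cP\Alg \cong \cP\Alg$ in $\V\CAT/\C$.

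To identify $\T_\cP$ explicitly as $\T_\cS/\sfE$, I would appeal to Remark \ref{ae_jary_monadic}, which says that the $\J$-ary $\V$-monad induced on a strictly $\J$-monadic $\V$-category $\A$ by passing to $\A_\sfE$ is precisely the quotient of the inducing monad by $\sfE$. Applied to $\A = \cS\Alg$, whose inducing monad is $\T_\cS$ (by Theorem \ref{salg_jmonadic} and Definition \ref{jary_vmnd_gend_by_ffsig}), this gives $\T_\cP = \T_\cS/\sfE$, where the quotient is formed as in Theorem \ref{quot_by_ff_sys_eqns}. In the free-form case, $\sfE$ is already a system of parametrized $\J$-ary equations over $\T_\cS$; in the diagrammatic case, Remark \ref{bijection_diag_ff} lets us identify $\sfE$ with such a system via the bijection from \ref{bij_diag_ff}, and both interpretations yield the same $\V$-category of models and hence the same quotient monad.

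There is no real obstacle here, since the heavy lifting was done in Theorems \ref{salg_jmonadic}, \ref{quot_by_ff_sys_eqns}, and \ref{Jary_eqn_monadic_prop}; the only point that requires minor care is verifying that the identification $\cP\Alg = \cS\Alg_\sfE$ is one of $\V$-categories over $\C$ (not merely of ordinary categories), which is immediate because both sides are defined as full sub-$\V$-categories of $\cS\Alg$ on the same class of objects and are equipped with the restriction of $U^\cS$.
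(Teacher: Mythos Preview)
Your proposal is correct and follows essentially the same route as the paper's own argument: the paper notes in the remark immediately preceding the theorem that $\cP\Alg = \cS\Alg_\sfE$ as $\V$-categories over $\C$, and then invokes Theorems \ref{salg_jmonadic} and \ref{Jary_eqn_monadic_prop} together with Remark \ref{ae_jary_monadic} to conclude. Your handling of the free-form versus diagrammatic cases via \ref{bijection_diag_ff} is a helpful elaboration but not a departure from the paper's reasoning.
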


\begin{defn}
Given a free-form or diagrammatic $\J$-presentation $\cP$, we call the $\V$-monad $\T_\cP = \T_\cS/\sf{E}$ in \ref{vmnd_pres_by_ff_jpres} the \textbf{$\J$-ary $\V$-monad presented by $\cP$}. \qed
\end{defn}

\begin{defn}
\label{Xparam_Palg}
Let $\cP = (\cS, \sfE)$ be a diagrammatic $\J$-presentation and $\X$ a $\V$-category.  An \textbf{$\X$-parametrized $\cP$-algebra} is an $\X$-parametrized $\cS$-algebra $A : \X \to \C$ whose corresponding $\V$-functor $\underline{A}:\X \rightarrow \cS\Alg$ (\ref{universal_property_SAlg}) factors through $\cP\Alg \hookrightarrow \cS\Alg$ (equivalently, has the property that the $\cS$-algebra $\underline{A}X$ is a $\cP$-algebra for each object $X$ of $\X$).  In particular, by \ref{universal_property_SAlg}, the $\V$-functor $U^\cP : \cP\Alg \to \C$ \eqref{algs_for_free_form_pres} carries the structure of a $\cP\Alg$-parametrized $\cP$-algebra that has the following universal property, which characterizes $\cP\Alg$ uniquely up to isomorphism:
\end{defn}

\begin{prop}
\label{universal_property_PAlg}
Let $\cP$ be a diagrammatic $\J$-presentation. Then $\cP\Alg$ is a representing object for the functor $\cP\Alg[-] : \V\CAT^\op \to \SET$ that sends each $\V$-category $\X$ to the (large) set $\cP\Alg[\X]$ of all $\X$-parametrized $\cP$-algebras.  The counit of this representation is $U^\cP : \cP\Alg \to \C$.  Thus, there is a bijective correspondence, natural in $\X \in \V\CAT$, between $\V$-functors $\X \to \cP\Alg$ and $\X$-parametrized $\cP$-algebras. \qed 
\end{prop}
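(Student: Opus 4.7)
The plan is to reduce the claim to Proposition \ref{universal_property_SAlg} by exploiting the fact that $\cP\Alg$ is a full sub-$\V$-category of $\cS\Alg$ (\ref{algs_for_free_form_pres}) and that $\X$-parametrized $\cP$-algebras are, by definition (\ref{Xparam_Palg}), precisely those $\X$-parametrized $\cS$-algebras whose classifying $\V$-functor $\underline{A} : \X \to \cS\Alg$ lands in the full sub-$\V$-category $\cP\Alg$.

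First, I would invoke \ref{universal_property_SAlg} to obtain, for each $\V$-category $\X$, a natural bijection between $\V$-functors $F : \X \to \cS\Alg$ and $\X$-parametrized $\cS$-algebras, implemented by postcomposition with $U^\cS$. Since $\cP\Alg \hookrightarrow \cS\Alg$ is fully faithful, $\V$-functors $\X \to \cP\Alg$ correspond bijectively to those $F : \X \to \cS\Alg$ such that $FX \in \cP\Alg$ for every $X \in \ob\X$. Combining these two bijections, I would verify that $\V$-functors $\X \to \cP\Alg$ correspond bijectively to those $\X$-parametrized $\cS$-algebras $A : \X \to \C$ whose classifying $\V$-functor $\underline{A} : \X \to \cS\Alg$ factors through $\cP\Alg$, which is precisely the definition of an $\X$-parametrized $\cP$-algebra (\ref{Xparam_Palg}).

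Next, I would check naturality in $\X \in \V\CAT$: given a $\V$-functor $H : \Y \to \X$, restricting a $\V$-functor $\X \to \cP\Alg$ along $H$ corresponds under the above bijection to precomposing the underlying $\X$-parametrized $\cP$-algebra $A : \X \to \C$ with $H$, which yields a $\Y$-parametrized $\cP$-algebra since precomposition preserves the condition that the classifying functor factors through $\cP\Alg$. This naturality statement is inherited directly from the naturality in \ref{universal_property_SAlg}, and the condition that the factorization through $\cP\Alg$ be preserved is automatic.

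Finally, I would identify the counit of the representation by applying the bijection to the identity $\V$-functor $\id_{\cP\Alg} : \cP\Alg \to \cP\Alg$: it yields precisely the $\cP\Alg$-parametrized $\cP$-algebra structure on $U^\cP : \cP\Alg \to \C$ already described in \ref{Xparam_Palg}, since $\id_{\cP\Alg}$ composed with the inclusion $\cP\Alg \hookrightarrow \cS\Alg$ corresponds under \ref{universal_property_SAlg} to the restriction of the counit $U^\cS$ to $\cP\Alg$, which is $U^\cP$. I do not expect any substantive obstacle here; the only point requiring care is articulating what it means for a $\V$-functor to factor through a full sub-$\V$-category (it is determined by the condition on objects), and confirming that the bijective correspondence and its naturality transfer cleanly along the fully faithful inclusion $\cP\Alg \hookrightarrow \cS\Alg$.
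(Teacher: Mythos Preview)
Your proposal is correct and matches the paper's own approach: the paper treats this proposition as immediate (note the \qed in the statement), having already observed in \ref{Xparam_Palg} that it follows from \ref{universal_property_SAlg} and the definition of $\X$-parametrized $\cP$-algebra via factorization through the full sub-$\V$-category $\cP\Alg \hookrightarrow \cS\Alg$. Your explicit unpacking of the bijection, its naturality, and the identification of the counit is exactly the intended argument.
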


\begin{prop}\label{trad_pres_det_ff_pres}
Let $\A$ be a strictly $\J$-monadic $\V$-category over $\C$.  Then $\A \cong \cP\Alg$ in $\V\CAT/\C$ for some diagrammatic (or, equivalently, free-form) $\J$-presentation $\cP$.
\end{prop}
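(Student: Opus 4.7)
The plan is to chain together the existing translations between traditional and free-form signatures/presentations. Since $\A$ is strictly $\J$-monadic over $\C$, there is a $\J$-ary $\V$-monad $\T$ on $\C$ with $\A \cong \T\Alg$ in $\V\CAT/\C$. I would then invoke \ref{presentations} to obtain the canonical traditional $\J$-presentation $P_\T = (\Sigma, E)$ of $\T$, where $E = (t, u : \Gamma \rightrightarrows \T_\Sigma)$, so that $\A \cong P_\T\Alg$ in $\V\CAT/\C$ by \ref{algebrasforpresentation}.

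Next I would convert this traditional presentation to a free-form one. Setting $\cS := \cS_\Sigma$ as in \ref{trads_det_ffs} (operation symbols indexed by $\ob\J$, with $J \in \ob\J$ having arity $J$ and parameter $\Sigma J$), the recipe of \ref{sig_det_by_ffsig} gives $\Sigma_\cS J = \coprod_{J' \in \ob\J,\, J' = J} \Sigma J' = \Sigma J$, i.e. $\Sigma_\cS = \Sigma$; hence $\T_\cS = \T_{\Sigma_\cS} = \T_\Sigma$ and $\cS\Alg \cong \Sigma\Alg$ in $\V\CAT/\C$. I would then regard $E$ as the system of parametrized $\J$-ary equations $\sfE := (t_J \doteq u_J : \Gamma J \rightrightarrows T_\Sigma J)_{J \in \ob\J}$ over $\T_\cS$, per \ref{sys_jary_eqs_as_ff}; this is a small family since $\J$ is small, and by \ref{sys_jary_eqs_as_ff} we have $(\T_\cS, \sfE)\Alg = (\T_\Sigma, E)\Alg$ as $\V$-categories over $\C$. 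The pair $\cP := (\cS, \sfE)$ is then a free-form $\J$-presentation.

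Finally, combining \ref{bij_diag_ff} and \ref{vmnd_pres_by_ff_jpres} yields $\cP\Alg \cong (\T_\cS, \sfE)\Alg$ in $\V\CAT/\C$, and stringing everything together gives the desired chain of isomorphisms
\[\cP\Alg \;\cong\; (\T_\cS, \sfE)\Alg \;=\; (\T_\Sigma, E)\Alg \;\cong\; P_\T\Alg \;\cong\; \T\Alg \;\cong\; \A\]
in $\V\CAT/\C$. Invoking the identification of free-form with diagrammatic $\J$-presentations in \ref{bijection_diag_ff}, we may view $\cP$ equivalently as a diagrammatic $\J$-presentation, completing the proof. I do not expect any serious obstacle here: the argument is essentially an orchestration of previously established comparisons, with the only subtlety being the smallness of the index set for $\sfE$, which is immediate from the smallness of $\J$.
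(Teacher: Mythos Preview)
Your proposal is correct and follows essentially the same approach as the paper: obtain a traditional $\J$-presentation $P = (\Sigma,E)$ of the induced $\J$-ary $\V$-monad via \ref{presentations}, convert $\Sigma$ to the free-form signature $\cS_\Sigma$ via \ref{trads_det_ffs}, and convert $E$ to a system of parametrized $\J$-ary equations $\sfE$ via \ref{sys_jary_eqs_as_ff}, taking $\cP = (\cS_\Sigma,\sfE)$. Your extra verification that $\Sigma_{\cS_\Sigma} = \Sigma$ (hence $\T_\cS = \T_\Sigma$) is a helpful sanity check that the paper leaves implicit.
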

\begin{proof}
By \ref{presentations}, there is a traditional $\J$-presentation $P = (\Sigma,E)$ with $\A \cong P\Alg$ in $\V\CAT/\C$.  Here, $E$ is a traditional system of $\J$-ary equations over the $\V$-monad $\T_\Sigma$ associated to the strictly $\J$-monadic $\V$-category $\Sigma\Alg$ over $\C$ (\ref{signatures}, \ref{presentations}).  By \ref{sys_jary_eqs_as_ff}, $E$ may be regarded as a system of parametrized $\J$-ary equations $\sfE$ over $\T_\Sigma$ and so, equivalently, as a system of diagrammatic $\J$-ary equations on $\Sigma\Alg$ (\ref{bij_diag_ff}).  But $\Sigma\Alg = \cS_\Sigma\Alg$ as $\V$-categories over $\C$, by \ref{trads_det_ffs}, and the result follows by taking $\cP = (\cS_\Sigma,\sfE)$.
\end{proof}

\begin{defn}
\label{Jaryvariety}
{
A $\V$-category over $\C$ is a \textbf{$\J$-ary variety} if it is of the form $\cP\Alg$ for some diagrammatic (or free-form) $\J$-presentation $\cP$. The (ordinary) category of $\J$-ary varieties $\Var_{\underJ}(\C)$ is the full subcategory of $\V\CAT/\C$ whose objects are the $\J$-ary varieties. \qed 
}
\end{defn}

\begin{theo}
\label{varietyequivalence}
The category $\Var_{\underJ}(\C)$ of $\J$-ary varieties is dually equivalent to the category of $\J$-ary $\V$-monads on $\C$:
\[ \Var_{\underJ}(\C) \simeq \Mnd_{\underJ}(\C)^\op. \]
\end{theo}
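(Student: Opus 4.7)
The plan is to observe that $\Var_{\underJ}(\C)$ and the category $\J\text{-}\Monadic^!$ (from \ref{Jary}) are both full subcategories of $\V\CAT/\C$ that contain the same objects up to isomorphism, so that the inclusion $\Var_{\underJ}(\C) \hookrightarrow \J\text{-}\Monadic^!$ is an equivalence. Then the desired equivalence $\Var_{\underJ}(\C) \simeq \Mnd_{\underJ}(\C)^\op$ follows by composing with the equivalence $\J\text{-}\Monadic^! \simeq \Mnd_{\underJ}(\C)^\op$ recorded in \ref{Jary} (which in turn comes from \cite[\S 9.3]{Pres}).

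First, I would verify that $\Var_{\underJ}(\C) \subseteq \J\text{-}\Monadic^!$ as full subcategories of $\V\CAT/\C$. By definition, every object of $\Var_{\underJ}(\C)$ has the form $\cP\Alg$ for some diagrammatic $\J$-presentation $\cP$, and by \ref{vmnd_pres_by_ff_jpres} we have $\cP\Alg \cong \T_\cP\Alg$ in $\V\CAT/\C$ for the $\J$-ary $\V$-monad $\T_\cP = \T_\cS/\sfE$; hence $\cP\Alg$ is strictly $\J$-monadic over $\C$, so lies in $\J\text{-}\Monadic^!$.

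Next, I would verify essential surjectivity of the inclusion $\Var_{\underJ}(\C) \hookrightarrow \J\text{-}\Monadic^!$. Let $\A \in \ob(\J\text{-}\Monadic^!)$, so $\A$ is strictly $\J$-monadic over $\C$. By \ref{trad_pres_det_ff_pres}, there is a diagrammatic $\J$-presentation $\cP$ with $\A \cong \cP\Alg$ in $\V\CAT/\C$, and $\cP\Alg \in \ob\Var_{\underJ}(\C)$ by definition. Thus every object of $\J\text{-}\Monadic^!$ is isomorphic to an object of $\Var_{\underJ}(\C)$.

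Since both $\Var_{\underJ}(\C)$ and $\J\text{-}\Monadic^!$ are \emph{full} subcategories of the same category $\V\CAT/\C$, the inclusion $\Var_{\underJ}(\C) \hookrightarrow \J\text{-}\Monadic^!$ is automatically fully faithful, and combined with the essential surjectivity just established it is an equivalence of categories. Composing with \ref{Jary} yields the claim. There is no substantive obstacle to this plan; it is essentially a bookkeeping argument that packages \ref{vmnd_pres_by_ff_jpres} and \ref{trad_pres_det_ff_pres} together with the monad/monadicity equivalence already established. The only point worth flagging is the importance of both subcategories being \emph{full} in $\V\CAT/\C$, which is what lets us promote the bijection-up-to-isomorphism on objects into an actual equivalence without needing to track morphism-level data separately.
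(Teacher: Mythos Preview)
Your proposal is correct and takes essentially the same approach as the paper: both argue that $\Var_{\underJ}(\C)$ and $\J\text{-}\Monadic^!$ are full subcategories of $\V\CAT/\C$ containing the same objects up to isomorphism (using \ref{vmnd_pres_by_ff_jpres} and \ref{trad_pres_det_ff_pres}), and then invoke the equivalence $\J\text{-}\Monadic^! \simeq \Mnd_{\underJ}(\C)^\op$ from \ref{Jary}. The paper phrases this by saying that $\J\text{-}\Monadic^!$ is the repletion of $\Var_{\underJ}(\C)$ in $\V\CAT/\C$, which is just another way of stating what you verified.
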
 

\begin{proof}
By \ref{vmnd_pres_by_ff_jpres} and \ref{trad_pres_det_ff_pres}, the repletion of $\Var_{\underJ}(\C)$ in $\V\CAT/\C$ is the full subcategory $\J\text{-}\Monadic^!$ of $\V\CAT/\C$ consisting of the strictly $\J$-monadic $\V$-categories over $\C$ (\ref{Jary}).  Hence $\Var_{\underJ}(\C) \simeq \J\text{-}\Monadic^!$, but $\J\text{-}\Monadic^! \simeq \Mnd_{\underJ}(\C)^\op$ by \ref{Jary}.   
\end{proof}

\begin{rmk}
\label{varietyequivalencermk}
Theorem \ref{varietyequivalence} generalizes certain results in the literature. If we take $\V = \C = \Set$ and $\J = \SF(\Set)$ (i.e. the finite cardinals), then \ref{varietyequivalence} specializes to the classical result that ordinary varieties (in the sense of universal algebra) are dually equivalent to finitary monads on $\Set$. If we take $\V = \C = \Pos$ (the cartesian closed category of posets and monotone maps) and $\J = \SF(\Pos)$, then \ref{varietyequivalence} specializes to the result \cite[Theorem 4.6]{categoricalviewordered} that ($\SF(\Pos)$-ary) varieties of ordered algebras are dually equivalent to strongly finitary (i.e. $\SF(\Pos)$-ary) enriched monads on $\Pos$. If we take $\V = \Set$ and $\C = \Pos$, and we let $\J = \Pos_f$ be a skeleton of the full subcategory of finite (equivalently, finitely presentable) posets, then \ref{varietyequivalence} specializes to the result \cite[Corollary 4.5]{finitarymonadsposets} that finitary (i.e. $\Pos_f$-ary) varieties of ordered algebras are dually equivalent to finitary (unenriched) monads on $\Pos$. Finally, if we take $\V = \C = \Pos$ and $\J = \Pos_f$, then \ref{varietyequivalence} specializes to the result \cite[Corollary 4.7]{finitarymonadsposets} that finitary (i.e. $\Pos_f$-ary) varieties of \emph{coherent} ordered algebras are dually equivalent to finitary enriched monads on $\Pos$.\footnote{The varieties of \cite{categoricalviewordered, finitarymonadsposets} are technically defined differently than ours, in that the former varieties are defined using syntactic inequations; but, in view of \ref{trads_det_ffs} and \ref{sig_det_by_ffsig}, it follows from \cite[p. 2]{categoricalviewordered} and \cite[Remark 2.7, Example 3.19(9)]{finitarymonadsposets} (cf. also the discussion in \cite[\S 5]{Robinson}) that the two definitions are equivalent.} \qed
\end{rmk}

\section{Examples of diagrammatic $\J$-presentations and $\J$-ary varieties}
\label{presentationexamples}

In the present section, we develop several examples of diagrammatic $\J$-presentations and $\J$-ary varieties. We very often (implicitly) use the fact (\ref{signature_operation_natural}) that every operation symbol of a free-form $\J$-signature $\cS$ canonically induces a $\V$-natural $\cS$-operation.

In our first two examples, we employ the system of arities $\SF(\V) \hookrightarrow \V$ consisting of the finite copowers of the terminal object in a complete and cocomplete cartesian closed category $\V$ (\ref{runningexamples}, \ref{sys_ar}). For each finite cardinal $n \in \N$, we write simply $n$ to denote the $n$th copower $n \cdot 1$ of the terminal object $1$ of $\V$, so that for each object $A$ of $\V$, the internal hom $\V(n, A)$ may be identified with the (conical) $n$th power $A^n$. Hence, a parametrized operation on $A$ with arity $n$ and parameter $C \in \ob\V$ is a morphism $\omega:C \times A^n \rightarrow A$ in $\V$, and in the case where $C = 1$ we recover the notion of $n$-ary operation $\omega:A^n \rightarrow A$ on $A$ in the usual sense.

\begin{egg}
\label{Rmodules}
{
Let $\left(R, +^R, \cdot^R, 0^R, 1^R\right)$ be an \emph{internal rig} (or \textit{internal unital semiring}) in a complete and cocomplete cartesian closed category $\V$ (see e.g. \cite[2.7]{functional}). We now provide a diagrammatic $\SF(\V)$-presentation whose algebras will be the \emph{(left) $R$-modules} in $\V$. Recall (see \cite[2.7]{functional}) that an $R$\emph{-module} in $\V$ is an object $M$ of $\V$ equipped with morphisms $+^M : M \times M \to M$, $0^M : 1 \to M$ and $\bullet^M : R \times M \to M$ such that $\left(M, +^M, 0^M\right)$ is a commutative monoid in $\V$, $\bullet^M$ is an associative, unital action of the monoid $\left(R, \cdot^R, 1^R\right)$ on $M$, and $\bullet^M$ is a \emph{bimorphism of commutative monoids} from $\left(R, +^R, 0^R\right), \left(M, +^M, 0^M\right)$ to $\left(M, +^M, 0^M\right)$. Explicitly, we require the satisfaction of the following equations, where we use the convenient notation employed in \cite[2.6]{functional} for expressing equations involving algebraic operations in categories with finite products; in particular, given objects $X_1,...,X_n$ of $\V$, we write expressions of the form $(x_1,x_2,...,x_n):X_1 \times X_2 \times ... \times X_n$ to mean that $x_i$ denotes the $i$th projection morphism $x_i = \pi_i:X_1 \times X_2 \times ... \times X_n  \rightarrow X_i$ for each $i \in \{1,...,n\}$.
\begin{enumerate}[leftmargin=*]
\item $m_1 +^M \left(m_2 +^M m_3\right) = \left(m_1 +^M m_2\right) +^M m_3 : M \times M \times M \to M$, where $(m_1, m_2, m_3) : M \times M \times M$; 
\item $m +^M n = n +^M m : M \times M \to M$, where $(m, n) : M \times M$;
\item $m +^M 0^M = m : M \to M$ and $0^M +^M m = m : M \to M$, where $m : M$;
\item $1^R \bullet^M m = m : M \to M$, where $m : M$;
\item $r \bullet^M \left(s \bullet^M m\right) = \left(r \cdot^R s\right) \bullet^M m : R \times R \times M \to M$, where $(r, s, m) : R \times R \times M$;
\item $0^R \bullet^M m = 0^M : M \to M$ and $r \bullet^M 0^M = 0^M : R \to M$, where $m : M$ and $r : R$;
\item $\left(r +^R s\right) \bullet^M m = \left(r \bullet^M m\right) +^M \left(s \bullet^M m\right) : R \times R \times M \to M$, where $(r, s, m) : R \times R \times M$; and
\item $r \bullet^M \left(m +^M n\right) = \left(r \bullet^M m\right) +^M \left(r \bullet^M n\right) : R \times M \times M \to M$, where $(r, m, n) : R \times M \times M$. 
\end{enumerate}
For example, equation (8) expresses the commutativity of the following diagram
% https://q.uiver.app/?q=WzAsNixbMCwwLCJSIFxcdGltZXMgTV4yIl0sWzIsMCwiUl4yIFxcdGltZXMgTV4yIl0sWzMsMCwiKFIgXFx0aW1lcyBNKSBcXHRpbWVzIChSIFxcdGltZXMgTSkiXSxbNSwwLCJNXjIiXSxbNSwxLCJNIl0sWzAsMSwiUiBcXHRpbWVzIE0iXSxbMCwxLCJcXERlbHRhX1IgXFx0aW1lcyAxIl0sWzEsMiwiXFxzaW0iXSxbMiwzLCJcXGJ1bGxldF5NIFxcdGltZXMgXFxidWxsZXReTSJdLFszLDQsIiteTSJdLFswLDUsIjEgXFx0aW1lcyArXk0iLDJdLFs1LDQsIlxcYnVsbGV0Xk0iLDJdXQ==
\[\begin{tikzcd}
	{R \times M^2} && {R^2 \times M^2} & {(R \times M) \times (R \times M)} && {M^2} \\
	{R \times M} &&&&& M,
	\arrow["{\Delta_R \times 1}", from=1-1, to=1-3]
	\arrow["\sim", from=1-3, to=1-4]
	\arrow["{\bullet^M \times \bullet^M}", from=1-4, to=1-6]
	\arrow["{+^M}", from=1-6, to=2-6]
	\arrow["{1 \times +^M}"', from=1-1, to=2-1]
	\arrow["{\bullet^M}"', from=2-1, to=2-6]
\end{tikzcd}\]
where $\Delta_R$ is the diagonal and the isomorphism $\sim$ exchanges the middle two factors. 

Our free-form $\SF(\V)$-signature $\cS$ for $R$-modules will thus have one operation symbol $+$ of arity $2$ and parameter $1$, one operation symbol $0$ of arity $0$ and parameter $1$, and one operation symbol $\bullet$ of arity $1$ and parameter $R$. As in the example of equation (8) above, each of the listed equations describes a diagrammatic $\cS$-equation $C \times M^n \rightrightarrows M$ ($M \in \cS\Alg$), for some arity $n \in \N$ and parameter $C \in \ob\V$. We thus obtain a diagrammatic $\SF(\V)$-presentation $\cP = (\cS, \sfE)$ such that $\cP$-algebras are precisely $R$-modules in $\V$, and we may therefore regard $R\Mod := \cP\Alg$ as the \emph{$\V$-category of (left) $R$-modules in $\V$}. By \ref{vmnd_pres_by_ff_jpres}, this diagrammatic $\SF(\V)$-presentation $\cP$ presents a strongly finitary $\V$-monad $\T_{\cP}$ on $\V$ with $\T_\cP\Alg \cong R\Mod$ in $\V\CAT/\V$. \qed
}
\end{egg} 

\begin{egg}
\label{monoidalcategories}
Letting $\V$ be the complete and cocomplete cartesian closed category $\mathsf{Cat}$ of small categories, we now discuss a presentation of small monoidal categories in terms of the concepts of \S \ref{sec:sys_eqns} and \S \ref{freeform}, relative to the subcategory of arities $\SF(\mathsf{Cat}) \hookrightarrow \mathsf{Cat}$.  A small monoidal category is precisely a pseudomonoid in the monoidal $2$-category $\mathsf{Cat}$ (see \cite[2.4]{McCrudden}), i.e., a small category $\mathbb{A}$ equipped with the following data, where we write  $\tilde{\mathbf{2}}$ to denote the category with exactly two objects and an isomorphism between these objects:
\begin{itemize}[leftmargin=*]
\item Functors $m : \mathbb{A}^2 \to \mathbb{A}$ and $i:1 \to \mathbb{A}$ (where $1$ denotes the terminal category);

\item A natural isomorphism % https://q.uiver.app/?q=WzAsMixbMCwwLCJcXGJiQ14zIl0sWzIsMCwiXFxiYkMiXSxbMCwxLCJcXGFscGhhXzEiLDIseyJvZmZzZXQiOjN9XSxbMCwxLCJcXGFscGhhXzAiLDAseyJvZmZzZXQiOi0zfV0sWzMsMiwiXFxhbHBoYSIsMix7InNob3J0ZW4iOnsic291cmNlIjoyMCwidGFyZ2V0IjoyMH19XV0=
\begin{tikzcd}
	{\mathbb{A}^3} && \mathbb{A}
	\arrow[""{name=0, anchor=center, inner sep=0}, "{\alpha_1}"', shift right=3, from=1-1, to=1-3]
	\arrow[""{name=1, anchor=center, inner sep=0}, "{\alpha_0}", shift left=3, from=1-1, to=1-3]
	\arrow["\alpha"', shorten <=2pt, shorten >=2pt, Rightarrow, from=1, to=0]
\end{tikzcd} (the \emph{associator}) between the functors $\alpha_0 = m \circ (m \times 1)$ and $\alpha_1 = m \circ (1 \times m)$; 

\item A natural isomorphism
\begin{tikzcd}
	{\mathbb{A}} && \mathbb{A}
	\arrow[""{name=0, anchor=center, inner sep=0}, "{\lambda_1}"', shift right=3, from=1-1, to=1-3]
	\arrow[""{name=1, anchor=center, inner sep=0}, "{\lambda_0}", shift left=3, from=1-1, to=1-3]
	\arrow["\lambda"', shorten <=2pt, shorten >=2pt, Rightarrow, from=1, to=0]
\end{tikzcd}
(the \emph{left unitor}) between functors $\lambda_0, \lambda_1 : \mathbb{A} \rightrightarrows \mathbb{A}$, where $\lambda_0$ is the composite $\mathbb{A} \xrightarrow{\sim} 1 \times \mathbb{A} \xrightarrow{i \times 1} \mathbb{A} \times \mathbb{A} \xrightarrow{m} \mathbb{A}$ and $\lambda_1 = 1_\mathbb{A} : \mathbb{A} \to \mathbb{A}$;

\item A natural isomorphism
\begin{tikzcd}
	{\mathbb{A}} && \mathbb{A}
	\arrow[""{name=0, anchor=center, inner sep=0}, "{\rho_1}"', shift right=3, from=1-1, to=1-3]
	\arrow[""{name=1, anchor=center, inner sep=0}, "{\rho_0}", shift left=3, from=1-1, to=1-3]
	\arrow["\rho"', shorten <=2pt, shorten >=2pt, Rightarrow, from=1, to=0]
\end{tikzcd}
(the \emph{right unitor}) between functors $\rho_0, \rho_1 : \mathbb{A} \rightrightarrows \mathbb{A}$, where $\rho_0 = 1_\mathbb{A} : \mathbb{A} \to \mathbb{A}$ and $\rho_1$ is the composite $\mathbb{A} \xrightarrow{\sim} \mathbb{A} \times 1 \xrightarrow{1 \times i} \mathbb{A} \times \mathbb{A} \xrightarrow{m} \mathbb{A}$;
\end{itemize}
such that the following monoidal coherence laws hold:
\begin{itemize}[leftmargin=*]
\item The \textit{pentagon identity} requires that the composite natural isomorphism
\[ m(m \times 1)(m \times 1 \times 1) \xrightarrow{\alpha \circ 1} m(1 \times m)(m \times 1 \times 1) = m(m \times 1)(1 \times 1 \times m) \xrightarrow{\alpha \circ 1} m(1 \times m)(1 \times 1 \times m) \] between functors $\mathbb{A}^4 \rightrightarrows \mathbb{A}$ be equal to the composite natural isomorphism
\[ m(m \times 1)(m \times 1 \times 1) \xrightarrow{1 \circ (\alpha \times 1)} m(m \times 1)(1 \times m \times 1) \xrightarrow{\alpha \circ 1} m(1 \times m)(1 \times m \times 1) \xrightarrow{1 \circ (1 \times \alpha)} m(1 \times m)(1 \times 1 \times m). \] 
\item The following automorphism of $m:\mathbb{A}^2 \rightarrow \mathbb{A}$ is required to be equal to the identity on $m$:
\[ m = m \circ 1 \xrightarrow{1 \circ (\rho \times 1)} m(m \times 1)(1 \times i \times 1) \xrightarrow{\alpha \circ 1} m(1 \times m)(1 \times i \times 1) \xrightarrow{1 \circ (1 \times \lambda)} m \circ 1 = m\;.\]
\end{itemize}
The associator $\alpha$ clearly corresponds to a functor $\tilde{\mathbf{2}} \to \mathbb{A}^{\mathbb{A}^3}$, and hence to a parametrized operation $\mathbb{A}^3 \times \tilde{\mathbf{2}} \to \mathbb{A}$ with arity $3$ and parameter $\tilde{\mathbf{2}}$; and the left and right unitors $\lambda, \rho$ clearly correspond to functors $\tilde{\mathbf{2}} \to \mathbb{A}^{\mathbb{A}}$, and hence to parametrized operations $\mathbb{A} \times \tilde{\mathbf{2}} \to \mathbb{A}$ with arity $1$ and parameter $\tilde{\mathbf{2}}$. 

We thus define a free-form $\SF(\mathsf{Cat})$-signature $\cS$ with one operation symbol $m$ of arity $2$ and parameter $1$, one operation symbol $i$ of arity $0$ and parameter $1$, one operation symbol $\alpha$ of arity $3$ and parameter $\tilde{\mathbf{2}}$, and two operation symbols $\lambda, \rho$ of arity $1$ and parameter $\tilde{\mathbf{2}}$. As an intermediate step, we next provide a diagrammatic $\SF(\mathsf{Cat})$-presentation $\cP = (\cS, \sfE)$ for which a $\cP$-algebra will be a small category $\mathbb{A}$ equipped with functors $m^\mathbb{A} : \mathbb{A}^2 \to \mathbb{A}$, $i^\mathbb{A} : \mathbb{A}^0 = 1 \to \mathbb{A}$ and natural isomorphisms $\alpha^\mathbb{A} : \alpha_0^\mathbb{A} \xrightarrow{\sim} \alpha_1^\mathbb{A}$, $\lambda^\mathbb{A} : \lambda_0^\mathbb{A} \xrightarrow{\sim} \lambda_1^\mathbb{A}$, and $\rho^\mathbb{A} : \rho_0^\mathbb{A} \xrightarrow{\sim} \rho_1^\mathbb{A}$ with the intended domains and codomains (but not necessarily satisfying the monoidal coherence laws). So let $\mathbb{A}$ be an $\cS$-algebra. We want diagrammatic $\cS$-equations expressing that the domain and codomain $\alpha_0^\mathbb{A}, \alpha_1^\mathbb{A} : \mathbb{A}^3 \rightrightarrows \mathbb{A}$ of the associator $\alpha^\mathbb{A}$ are the composite functors in the following diagram:
% https://q.uiver.app/?q=WzAsNCxbMCwwLCJcXGJiQyBcXHRpbWVzIFxcYmJDIFxcdGltZXMgXFxiYkMiXSxbMiwwLCJcXGJiQyBcXHRpbWVzIFxcYmJDIl0sWzIsMSwiXFxiYkMiXSxbNCwwLCJcXGJiQyBcXHRpbWVzIFxcYmJDIFxcdGltZXMgXFxiYkMiXSxbMCwxLCJtIFxcdGltZXMgMSJdLFsxLDIsIm0iXSxbMCwyLCJcXGFscGhhXzAiLDJdLFszLDEsIjEgXFx0aW1lcyBtIiwyXSxbMywyLCJcXGFscGhhXzEiXV0=
\[\begin{tikzcd}
	{\mathbb{A} \times \mathbb{A} \times \mathbb{A}} && {\mathbb{A} \times \mathbb{A}} && {\mathbb{A} \times \mathbb{A} \times \mathbb{A}} \\
	&& \mathbb{A}
	\arrow["{m^\mathbb{A} \times 1}", from=1-1, to=1-3]
	\arrow["m^\mathbb{A}", from=1-3, to=2-3]
	\arrow["{\alpha_0^\mathbb{A}}"', from=1-1, to=2-3]
	\arrow["{1 \times m^\mathbb{A}}"', from=1-5, to=1-3]
	\arrow["{\alpha_1^\mathbb{A}}", from=1-5, to=2-3]
\end{tikzcd}\] 
In view of \ref{signature_operation_natural}, the two triangles in this diagram constitute diagrammatic $\cS$-equations $m^\mathbb{A} \circ \left(m^\mathbb{A} \times 1\right) \doteq \alpha_0^\mathbb{A},\; m^\mathbb{A} \circ \left(1 \times m^\mathbb{A}\right) \doteq  \alpha_1^\mathbb{A}:\mathbb{A}^3 \rightrightarrows \mathbb{A}^1$ ($\mathbb{A} \in \cS\Alg$). In an exactly analogous way, we obtain diagrammatic $\cS$-equations to ensure that $\lambda_0^\mathbb{A}, \lambda_1^\mathbb{A}, \rho_0^\mathbb{A}, \rho_1^\mathbb{A}$ all have their intended interpretations. We thus obtain a diagrammatic $\SF(\mathsf{Cat})$-presentation $\cP = (\cS, \sfE)$ for which a $\cP$-algebra is a small category $\mathbb{A}$ equipped with functors $m^\mathbb{A} : \mathbb{A}^2 \to \mathbb{A}$, $i^\mathbb{A} : \mathbb{A}^0 \to \mathbb{A}$ and natural isomorphisms $\alpha^\mathbb{A} : m^\mathbb{A} \circ \left(m^\mathbb{A} \times 1\right) \xrightarrow{\sim} m^\mathbb{A} \circ \left(1 \times m^\mathbb{A}\right)$, $\lambda^\mathbb{A} : m^\mathbb{A} \circ \left(i^\mathbb{A} \times 1\right) \xrightarrow{\sim} 1$, and $\rho^\mathbb{A} : 1 \xrightarrow{\sim} m^\mathbb{A} \circ \left(1 \times i^\mathbb{A}\right)$.

We now define a further system of diagrammatic $\SF(\mathsf{Cat})$-ary equations $\sfF$ on $\cP\Alg$ whose models \eqref{diagrammatic_Jary_eqn_on_g} will be monoidal categories. Given a $\cP$-algebra $\mathbb{A}$, we want $\mathbb{A}$ to be an $\sfF$-model iff $\mathbb{A}$ is a monoidal category, i.e. iff $\mathbb{A}$ satisfies the monoidal coherence laws. The pentagon identity requires the equality of two natural isomorphisms between functors $\mathbb{A}^4 \rightrightarrows \mathbb{A}$, and it can be expressed by a diagrammatic $\SF(\mathsf{Cat})$-ary equation $\mathbb{A}^4 \rightrightarrows \mathbb{A}^{\tilde{\mathbf{2}}}$ ($\mathbb{A} \in \cP\Alg$) on $\cP\Alg$. The second monoidal coherence law requires the equality of two natural isomorphisms between functors $\mathbb{A}^2 \rightrightarrows \mathbb{A}$, and it can be expressed by a diagrammatic $\SF(\mathsf{Cat})$-ary equation $\mathbb{A}^2 \rightrightarrows \mathbb{A}^{\tilde{\mathbf{2}}}$ ($\mathbb{A} \in \cP\Alg$) on $\cP\Alg$. We thus obtain a system of diagrammatic $\SF(\mathsf{Cat})$-ary equations $\mathsf{F}$ on $\cP\Alg$ for which $\cP\Alg_{\mathsf{F}} = \mathsf{MonCat}_{\mathsf{strict}}$, the 2-category of small monoidal categories and strict monoidal functors.  Hence, $\mathsf{MonCat}_{\mathsf{strict}}$ is a strictly $\SF(\mathsf{Cat})$-monadic $2$-category over $\mathsf{Cat}$, by \ref{Jary_eqn_monadic_prop} and \ref{vmnd_pres_by_ff_jpres}, so $\mathsf{MonCat}_{\mathsf{strict}}$ is isomorphic to the $2$-category of algebras for a strongly finitary $2$-monad on $\mathsf{Cat}$, as noted in \cite{KellyLackstronglyfinitary}. \qed 
\end{egg}

\begin{egg}
\label{internalcategoriesexample}
{
We now employ diagrammatic $\J$-presentations to prove an extension of the well-known result that (small) categories are finitary monadic over graphs \cite{Burroni}.  \highl{Indeed, whereas Wolff} \cite{WolffVcat} \highl{extended the latter result by proving the monadicity of small $\V$-categories over small $\V$-graphs under only the assumption that $\V$ is cocomplete, we now prove instead that \textit{internal categories} in $\V$ form a $\V$-category that is $\alpha$-ary monadic over \textit{internal graphs} in $\V$ as soon as $\V$ is locally $\alpha$-presentable as a closed category, which we assume for the remainder of Example} \ref{internalcategoriesexample}. \highl{To this end,} let $\B_\rightrightarrows$ be the free $\V$-category on the ordinary category consisting of a single parallel pair $s, t : 1 \rightrightarrows 0$, and consider the presheaf $\V$-category $\C = \Gph(\V) = \left[\B_\rightrightarrows, \V\right]$, the $\V$-category of \emph{internal graphs in} $\V$, which is a locally $\alpha$-presentable $\V$-category by \cite[3.1, 7.4]{Kellystr}. An object $G$ of $\C$ can be regarded as a parallel pair $s_G, t_G : G_1 \rightrightarrows G_0$ in $\V$, i.e. as an internal graph in $\V$. Let $\J = \C_\alpha$ be a skeleton of the full sub-$\V$-category of $\C$ spanned by the enriched $\alpha$-presentable objects, so that $\J$ is a bounded and eleutheric subcategory of arities by \ref{runningexamples}. In particular, $\J$ contains the representables $[i] := \B_\rightrightarrows(i, -) : \B_\rightrightarrows \to \V$ ($i \in \{0, 1\}$) and is closed under conical finite colimits. In the $\V = \Set$ case, the (ordinary) graph $[0]$ consists of a single vertex and no edges, while the graph $[1] = (\,\cdot \rightarrow \cdot\,)$ consists of a single edge between two distinct vertices. We have the two morphisms $\iota_1 := \B_\rightrightarrows(s, -) : [0] \to [1]$ and $\iota_2 := \B_\rightrightarrows(t, -) : [0] \to [1]$; in the $\V = \Set$ case, $\iota_1$ sends the single vertex of $[0]$ to the source of the unique edge of $[1]$, while $\iota_2$ sends the single vertex of $[0]$ to the target of this edge. For every internal graph $G : \B_\rightrightarrows \to \V$, the enriched Yoneda lemma enables us to identify $\C([i], G)$ with $G_i$ ($i \in \{0, 1\}$), so that we may identify $\C(\iota_1, G)$ with $s_G$ and $\C(\iota_2, G)$ with $t_G$. We shall also need the arity $[2] \in \ob \C_\alpha$ defined via the following pushout in $\C = \Gph(\V)$, which in the $\V = \Set$ case is the graph $[2] = (\,\cdot \rightarrow \cdot \rightarrow \cdot\,)$.
% https://q.uiver.app/?q=WzAsNCxbMCwwLCJcXEFfXFxyaWdodHJpZ2h0YXJyb3dzKDAsIC0pIl0sWzIsMCwiXFxBX1xccmlnaHRyaWdodGFycm93cygxLCAtKSJdLFswLDIsIlxcQV9cXHJpZ2h0cmlnaHRhcnJvd3MoMSwgLSkiXSxbMiwyLCJbMl0iXSxbMCwxLCJcXEFfXFxyaWdodHJpZ2h0YXJyb3dzKHMsIC0pIl0sWzAsMiwiXFxBX1xccmlnaHRyaWdodGFycm93cyh0LCAtKSIsMl0sWzIsMywicV8xIiwyXSxbMSwzLCJxXzIiXSxbMywwLCIiLDEseyJzdHlsZSI6eyJuYW1lIjoiY29ybmVyIn19XV0=
\[\begin{tikzcd}
	{[0]} && {[1]} \\
	\\
	{[1]} && {[2]}.
	\arrow["{\iota_1}", from=1-1, to=1-3]
	\arrow["{\iota_2}"', from=1-1, to=3-1]
	\arrow["{k_1}"', from=3-1, to=3-3]
	\arrow["{k_2}", from=1-3, to=3-3]
\end{tikzcd}\]
For every internal graph $G$ in $\V$ we have a pullback square in $\V$ as on the left below, 
% https://q.uiver.app/?q=WzAsOCxbMCwwLCJcXEMoWzJdLCBHKSJdLFsyLDAsIlxcQyhbMV0sIEcpIl0sWzIsMiwiXFxDKFswXSwgRykiXSxbMCwyLCJcXEMoWzFdLCBHKSJdLFs0LDAsIkdfMSBcXHRpbWVzX3tHXzB9IEdfMSJdLFs2LDAsIkdfMSJdLFs0LDIsIkdfMSJdLFs2LDIsIkdfMCJdLFswLDEsIlxcQyhxXzIsIEcpIl0sWzEsMiwiXFxDKFxcaW90YV8xLCBHKSJdLFswLDMsIlxcQyhxXzEsIEcpIiwyXSxbMywyLCJcXEMoXFxpb3RhXzIsIEcpIiwyXSxbNCw1LCJwXzJeRyJdLFs0LDYsInBfMV5HIiwyXSxbNSw3LCJzXkciXSxbNiw3LCJ0XkciLDJdXQ==
\[\begin{tikzcd}
	{\C([2], G)} && {\C([1], G)} && {G_2 := G_1 \times_{G_0} G_1} && {G_1} \\
	\\
	{\C([1], G)} && {\C([0], G)} && {G_1} && {G_0}
	\arrow["{\C(k_2, G)}", from=1-1, to=1-3]
	\arrow["{\C(\iota_1, G)}", from=1-3, to=3-3]
	\arrow["{\C(k_1, G)}"', from=1-1, to=3-1]
	\arrow["{\C(\iota_2, G)}"', from=3-1, to=3-3]
	\arrow["{\pi_2}", from=1-5, to=1-7]
	\arrow["{\pi_1}"', from=1-5, to=3-5]
	\arrow["{s_G}", from=1-7, to=3-7]
	\arrow["{t_G}"', from=3-5, to=3-7]
\end{tikzcd}\] 
which we may identify with the pullback square on the right.        

We now provide a presentation of  \emph{internal categories} in $\V$ in terms of the concepts of \S \ref{sec:sys_eqns} and \S \ref{freeform}; for a standard reference on internal categories, see e.g. \cite[\S 8.1]{Borceux1}. An internal category in $\V$ is an internal graph $A = \left(s_A, t_A : A_1 \rightrightarrows A_0\right)$ in $\V$ equipped with morphisms $e^A : A_0 \to A_1$ and $c^A : A_2 \to A_1$ (\textit{identity} and \textit{composition}) satisfying certain axioms.  The identity and composition operations $e^A$ and $c^A$ may be written equivalently as $e^A : \C([0], A) \to \C([1], A)$ and $c^A : \C([2], A) \to \C([1], A)$ and so are parametrized $\J$-ary operations on the object $A$ of $\C = \Gph(\V)$, where $\J = \ob\C_\alpha$. We thus require a free-form $\J$-signature $\cS$ with one operation symbol $e$ of arity $[0]$ and parameter $[1]$, and one operation symbol $c$ of arity $[2]$ and parameter $[1]$. We now construct a diagrammatic $\J$-presentation $\cP = (\cS, \sfE)$ that will play a preliminary role, in which $\sfE$ consists of four diagrammatic $\cS$-equations, expressing the interaction of the source and target morphisms with $e^A, c^A$ in an $\cS$-algebra $A$. In particular, we shall have a diagrammatic $\cS$-equation $\C\left([2], A\right) \rightrightarrows \C\left([0], A\right)$ ($A \in \cS\Alg$) given by the diagram on the left below, which may be written as the more familiar diagram on the right
\[\begin{tikzcd}
	{\C([2], A)} && {\C([1], A)} && A_2 && {A_1} \\
	\\
	{\C([1], A)} && {\C([0], A)} && {A_1} && {A_0},
	\arrow["{c^A}", from=1-1, to=1-3]
	\arrow["{s_A}", from=1-3, to=3-3]
	\arrow["{\C(k_1, A)}"', from=1-1, to=3-1]
	\arrow["{s_A}"', from=3-1, to=3-3]
	\arrow["{c^A}", from=1-5, to=1-7]
	\arrow["{\pi_1}"', from=1-5, to=3-5]
	\arrow["{s_A}", from=1-7, to=3-7]
	\arrow["{s_A}"', from=3-5, to=3-7]
\end{tikzcd}\] 
and we shall also have a diagrammatic $\cS$-equation $t_A \circ c^A \doteq t_A \circ \pi_2:\C\left([2], A\right) \rightrightarrows \C\left([0], A\right)$ ($A \in \cS\Alg$). We shall also have two diagrammatic $\cS$-equations $s_A \circ e^A \doteq 1_{A_0},\;t_A \circ e^A \doteq 1_{A_0}:\C\left([0], A\right) \rightrightarrows \C\left([0], A\right)$ ($A \in \cS\Alg$). We thus obtain a diagrammatic $\J$-presentation $\cP = (\cS, \sfE)$ for which $\cP$-algebras will be internal graphs $A = \left(s_A, t_A : A_1 \rightrightarrows A_0\right)$ in $\V$
equipped with identity and composition operations $e^A, c^A$ satisfying (just) these four diagrammatic laws. 

We now define a system of diagrammatic $\J$-ary equations $\mathsf{F}$ on $\cP\Alg$ whose models \eqref{diagrammatic_Jary_eqn_on_g} will be internal categories in $\V$. First, given an internal graph $G$, we need to define an \textit{object of paths of length three} in $G$. This object, which we write as $G_3$, can be defined as the (vertex of the) pullback of $s_G \circ \pi_1 : G_2 \to G_0$ along $t_G : G_1 \to G_0$, or as the pullback of $s_G : G_1 \to G_0$ along $t_G \circ \pi_2 : G_2 \to G_0$, since it is well known that these pullbacks $G_1 \times_{G_0} \left(G_1 \times_{G_0} G_1\right)$ and $\left(G_1 \times_{G_0} G_1\right) \times_{G_0} G_1$ are canonically isomorphic. By a dual process, we can define a further arity $[3] \in \ob\C_\alpha$ by taking the pushout of $k_1 \circ \iota_1 : [0] \to [2]$ along $\iota_2 : [0] \to [1]$, or equivalently the pushout of $\iota_1 : [0] \to [1]$ along $k_2 \circ \iota_2 : [0] \to [2]$. In the $\V = \Set$ case, $[3] = (\,\cdot \rightarrow \cdot \rightarrow \cdot \rightarrow \cdot\,)$. For any internal graph $G$ in $\V$, we may identify $\C([3], G)$ with $G_3 \cong G_1 \times_{G_0} \left(G_1 \times_{G_0} G_1\right) \cong \left(G_1 \times_{G_0} G_1\right) \times_{G_0} G_1$.       

Given a $\cP$-algebra $A$, we want $A$ to be an $\mathsf{F}$-model iff $A$ is an internal category in $\V$, i.e. iff composition in $A$ is associative and unital. The associativity of composition is expressed by the following diagram
% https://q.uiver.app/?q=WzAsNixbMCwwLCJcXEMoWzNdLCBBKSJdLFsyLDAsIkFfMSBcXHRpbWVzX3tBXzB9IEFfMSBcXHRpbWVzX3tBXzB9IEFfMSJdLFs0LDAsIkFfMSBcXHRpbWVzX3tBXzB9IEFfMSJdLFswLDEsIkFfMSBcXHRpbWVzX3tBXzB9IEFfMSBcXHRpbWVzX3tBXzB9IEFfMSJdLFsyLDEsIkFfMSBcXHRpbWVzX3tBXzB9IEFfMSJdLFs0LDEsIkFfMSJdLFswLDEsIlxcc2ltIl0sWzEsMiwiMSBcXHRpbWVzX3tBXzB9IGNeQSJdLFswLDMsIlxcd3IiLDJdLFszLDQsImNeQSBcXHRpbWVzX3tBXzB9IDEiLDJdLFs0LDUsImNeQSIsMl0sWzIsNSwiY15BIl1d
\[\begin{tikzcd}
	A_3 && {A_1 \times_{A_0} \left(A_1 \times_{A_0} A_1\right)} && {A_1 \times_{A_0} A_1} \\
	{\left(A_1 \times_{A_0} A_1\right) \times_{A_0} A_1} && {A_1 \times_{A_0} A_1} && {A_1},
	\arrow["\sim", from=1-1, to=1-3]
	\arrow["{1 \times_{A_0} c^A}", from=1-3, to=1-5]
	\arrow["\wr"', from=1-1, to=2-1]
	\arrow["{c^A \times_{A_0} 1}"', from=2-1, to=2-3]
	\arrow["{c^A}"', from=2-3, to=2-5]
	\arrow["{c^A}", from=1-5, to=2-5]
\end{tikzcd}\]
which is a diagrammatic $\J$-ary equation $\C\left([3], A\right) \rightrightarrows \C\left([1], A\right)$ ($A \in \cP\Alg$) on $\cP\Alg$. The identity axioms for an internal category are expressed by the following diagram
      % https://q.uiver.app/?q=WzAsNyxbMCwwLCJcXEMoWzFdLCBHKSJdLFsxLDAsIkdfMSJdLFszLDAsIkdfMSBcXHRpbWVzX3tHXzB9IEdfMSJdLFszLDEsIkdfMSJdLFszLDIsIlxcQyhbMV0sIEcpIl0sWzYsMCwiXFxDKFsxXSwgRykiXSxbNSwwLCJHXzEiXSxbMCwxLCJcXHNpbSJdLFsxLDIsIlxcbGVmdFxcbGFuZ2xlIDEsIGVeRyBcXGNpcmMgc15HXFxyaWdodFxccmFuZ2xlIl0sWzIsMywiYyJdLFszLDQsIlxcd3IiXSxbMCw0LCIxIiwyXSxbNSw2LCJcXHNpbSIsMl0sWzYsMiwiXFxsZWZ0XFxsYW5nbGUgZV5HIFxcY2lyYyB0XkcsIDFcXHJpZ2h0XFxyYW5nbGUiLDJdLFs1LDQsIjEiXV0=
\[\begin{tikzcd}
	& {A_1} && {A_1 \times_{A_0} A_1} && {A_1} & \\
	&&& {A_1}
	\arrow["{( 1, e^A \circ t_A)}", from=1-2, to=1-4]
	\arrow["c^A", from=1-4, to=2-4]
	\arrow["1"', from=1-2, to=2-4]
	\arrow["{(e^A \circ s_A, 1)}"', from=1-6, to=1-4]
	\arrow["1", from=1-6, to=2-4]
\end{tikzcd}\]
which may be regarded as a pair of diagrammatic $\J$-ary equations $\C\left([1], A\right) \rightrightarrows \C\left([1], A\right)$ ($A \in \cP\Alg$) on $\cP\Alg$. We thus obtain a system of diagrammatic $\J$-ary equations $\mathsf{F}$ on $\cP\Alg$ for which $\mathsf{F}$-models are internal categories in $\V$, so that we may regard $\mathsf{Cat}(\V) := \cP\Alg_\mathsf{F}$ as the \emph{$\V$-category of internal categories in $\V$}. The following theorem now follows from \ref{Jary_eqn_monadic_prop} and \ref{vmnd_pres_by_ff_jpres}:  
}
\end{egg}

\begin{theo}
\label{internal_cat_thm}
Let $\V$ be locally $\alpha$-presentable as a closed category. There is an $\alpha$-accessible $\V$-monad $\T$ on $\Gph(\V) = \left[\B_\rightrightarrows, \V\right]$ such that $\T\Alg \cong \mathsf{Cat}(\V)$ in $\V\CAT/\Gph(\V)$, so that the $\V$-category $\mathsf{Cat}(\V)$ of internal categories in $\V$ is strictly $\alpha$-ary monadic over $\Gph(\V)$. \qed
\end{theo}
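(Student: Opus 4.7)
The plan is essentially to invoke the two monadicity theorems cited in the statement and chain them together, noting that $\J$-ary $\V$-monads with $\J = \C_\alpha$ coincide with $\alpha$-accessible $\V$-monads by running example \ref{runningexamples}(1). So the proof is really a verification that the constructions already performed in \ref{internalcategoriesexample} satisfy the hypotheses of \ref{vmnd_pres_by_ff_jpres} and \ref{Jary_eqn_monadic_prop}, after which the conclusion is automatic.

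First I would note that $\C = \Gph(\V) = [\B_\rightrightarrows,\V]$ is locally $\alpha$-presentable as a $\V$-category whenever $\V$ is so, by \cite[3.1, 7.4]{Kellystr}, and hence the skeleton $\J = \C_\alpha$ of the enriched $\alpha$-presentable objects is a bounded and eleutheric subcategory of arities by item (1) of \ref{runningexamples}. Thus the blanket assumptions \ref{Vcategoryassumptions} and \ref{subcategoryaritiesassumptions} hold, and all of the machinery of \S\ref{sec:sys_eqns} and \S\ref{freeform} is available over $\C = \Gph(\V)$ with this choice of $\J$.

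Next I would apply \ref{vmnd_pres_by_ff_jpres} to the diagrammatic $\J$-presentation $\cP = (\cS,\sfE)$ constructed in \ref{internalcategoriesexample}; this yields a $\J$-ary (i.e.\ $\alpha$-accessible) $\V$-monad $\T_\cP$ on $\Gph(\V)$ with $\T_\cP\Alg \cong \cP\Alg$ in $\V\CAT/\Gph(\V)$, so that $\cP\Alg$ is a strictly $\J$-monadic $\V$-category over $\Gph(\V)$. Then, viewing the system $\mathsf{F}$ of diagrammatic $\J$-ary equations from \ref{internalcategoriesexample} as a system of diagrammatic $\J$-ary equations on the strictly $\J$-monadic $\V$-category $\cP\Alg$ in the sense of \ref{diagrammatic_Jary_eqn_on_g}, \ref{Jary_eqn_monadic_prop} immediately gives that $\mathsf{Cat}(\V) = \cP\Alg_\mathsf{F}$ is strictly $\J$-monadic over $\Gph(\V)$. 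By \ref{Jary}, this means precisely that there is a $\J$-ary $\V$-monad $\T$ on $\Gph(\V)$ with $\T\Alg \cong \mathsf{Cat}(\V)$ in $\V\CAT/\Gph(\V)$, and since $\J = \C_\alpha$ the $\J$-ary $\V$-monads on $\C$ are the $\alpha$-accessible $\V$-monads by \ref{runningexamples}(1), yielding the stated conclusion.

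There is no real obstacle in this proof, since all of the substantive work has been done in the construction of $\cP$ and $\mathsf{F}$ in \ref{internalcategoriesexample}. The only point that deserves a sentence of justification is the identification of $\J$-ary $\V$-monads with $\alpha$-accessible $\V$-monads in this setting, which in turn rests on the fact that under the $(\Iso,\All)$-factorization system the boundedness condition in \ref{subcategoryaritiesassumptions} reduces to preservation of conical $\alpha$-filtered colimits by each $\C([i],-)$ and $\C([2],-)$; this is standard for a locally $\alpha$-presentable $\V$-category. Everything else is a direct application of the general theorems \ref{vmnd_pres_by_ff_jpres} and \ref{Jary_eqn_monadic_prop}, applied in turn.
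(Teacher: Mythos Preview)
Your proposal is correct and follows exactly the approach indicated in the paper, which simply states that the theorem follows from \ref{Jary_eqn_monadic_prop} and \ref{vmnd_pres_by_ff_jpres} applied to the constructions of \ref{internalcategoriesexample}. One small inaccuracy in your final aside: the boundedness condition concerns preservation of $\alpha$-filtered colimits by $\C(J,-)$ for \emph{every} $J \in \ob\C_\alpha$, not just the particular arities $[0],[1],[2]$ used in the presentation; but since you correctly invoke \ref{runningexamples}(1) for this, the slip is harmless.
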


\begin{egg}
\label{PPexamples}
{
We now provide a diagrammatic $\J$-presentation for Plotkin and Power's \emph{global state} algebras \cite{PlotkinPowernotions}, which they employed in treating the \textit{global state monad} within their program of \textit{algebraic computational effects}. Suppose that $\V$ is cartesian closed, and let $j : \J \hookrightarrow \V$ be any bounded and eleutheric \emph{system} of arities (so that $\J$ contains the terminal object $1$ and is closed under finite products, \ref{sys_ar}). For example, we may take $\J = \SF(\V)$, or if $\V$ is locally $\alpha$-presentable as a cartesian closed category, then we may take $\J = \V_\alpha$, a skeleton of the full sub-$\V$-category of $\V$ consisting of the $\alpha$-presentable objects. Let $V \in \ob\J$ be a fixed object (the object of \textit{values}), let $L$ be a finite set (the set of \textit{locations}), and let $L_2 := \{(\ell, \ell') \in L \times L \mid \ell \neq \ell'\}$. By an abuse of notation, we write $L := L \cdot 1 \in \ob\V$ (the coproduct of $L$ copies of $1$) and $L_2 := L_2 \cdot 1 \in \ob\V$. We define a free-form $\J$-signature $\cS$ with an operation symbol $\ell$ (``lookup'') of arity $V$ and parameter $L$, and an operation symbol $u$ (``update'') of arity $1$ and parameter $L \times V$. An $\cS$-algebra is thus an object $A$ of $\V$ equipped with a \textit{lookup} operation $\ell^A : A^V \to A^L$ and an \textit{update} operation $u^A : A \to A^{L \times V}$, as in \cite[Definition 1]{PlotkinPowernotions}.  

Each of the seven commutative diagrams of \cite[Definition 1]{PlotkinPowernotions} for global state may be regarded as a diagrammatic $\cS$-equation $A^J \rightrightarrows A^C$ ($A \in \cS\Alg$) for some arity $J \in \ob\J$ and parameter $C \in \ob\V$. Specifically, the first commutative diagram can be expressed as a diagrammatic $\cS$-equation $A \rightrightarrows A^L$ ($A \in \cS\Alg$); the second as a diagrammatic $\cS$-equation $A^{V \times V} \rightrightarrows A^L$ ($A \in \cS\Alg$); the third as a diagrammatic $\cS$-equation $A \rightrightarrows A^{L \times V \times V}$ ($A \in \cS\Alg$); the fourth as a diagrammatic $\cS$-equation $A^V \rightrightarrows A^{L \times V}$ ($A \in \cS\Alg$); the fifth as a diagrammatic $\cS$-equation $A^{V \times V} \rightrightarrows A^{L_2}$ ($A \in \cS\Alg$); the sixth as a diagrammatic $\cS$-equation $A \rightrightarrows A^{L_2 \times V \times V}$ ($A \in \cS\Alg$); and the last as a diagrammatic $\cS$-equation $A^V \rightrightarrows A^{L_2 \times V}$ ($A \in \cS\Alg$). We thus obtain a diagrammatic $\J$-presentation $\cP = (\cS, \sfE)$ for which $\cP$-algebras are (the $\V$-based analogue of) the \emph{global state} algebras of \cite[Definition 1]{PlotkinPowernotions}, and by \ref{vmnd_pres_by_ff_jpres} we find that $\cP$ presents a $\J$-ary $\V$-monad $\T_\cP$ on $\V$ with $\T_\cP\Alg \cong \cP\Alg$ in $\V\CAT/\V$. \qed 
} 
\end{egg} 

\begin{egg}
\label{Statonexamples}
{
We now provide diagrammatic $\J$-presentations corresponding to some of Staton's \emph{parametrized algebraic theories} \cite{Statoninstances}, namely the parametrized theory of reading many bits \cite[\S IV]{Statoninstances} and the parametrized theory of restriction \cite[\S V.B]{Statoninstances}, and we thereby obtain $\J$-ary $\V$-monads determined by these parametrized algebraic theories. Suppose that $\V$ is cartesian closed, and let us first define a diagrammatic $\SF(\V)$-presentation for the theory of reading many bits, parametrized by an object $\alpha \in \ob\V$. The free-form $\SF(\V)$-signature $\cS$ will have one operation symbol $?$ of arity $2$ and parameter $\alpha$, so that an $\cS$-algebra is an object $A$ of $\V$ equipped with a morphism $?^A : A^2 \to A^\alpha$, or equivalently $?^A : A \times A \times \alpha \to A$. The desired diagrammatic $\SF(\V)$-presentation $\cP = (\cS, \sfE)$ will have diagrammatic $\cS$-equations corresponding to the equations of \cite{Statoninstances} annotated as (idem-$?_a$), (dup-$?_a$), and $(?_a/?_b)$ (for $a, b : \alpha$). The first equation (idem-$?_a$) is written there as $x \equiv x \ ?_a \ x$ for $x : A$ and $a : \alpha$, and can be expressed as the commutativity of the diagram
% https://q.uiver.app/?q=WzAsMyxbMCwwLCJBIFxcdGltZXMgUyJdLFsyLDAsIkEgXFx0aW1lcyBBIFxcdGltZXMgUyJdLFs0LDAsIkEiXSxbMCwxLCJcXERlbHRhX0EgXFx0aW1lcyAxIl0sWzEsMiwiP15BIl0sWzAsMiwiXFxwaV8xIiwyLHsib2Zmc2V0IjoyLCJjdXJ2ZSI6MX1dXQ==
\[\begin{tikzcd}
	{A \times \alpha} && {A \times A \times \alpha} && A,
	\arrow["{\Delta_A \times 1}", from=1-1, to=1-3]
	\arrow["{?^A}", from=1-3, to=1-5]
	\arrow["{\pi_1}"', shift right=2, curve={height=6pt}, from=1-1, to=1-5]
\end{tikzcd}\]
which is a diagrammatic $\cS$-equation $A \times \alpha \rightrightarrows A$ ($A \in \cS\Alg$). The second equation (dup-$?_a$) is written as $(u \ ?_a \ v) \ ?_a \ (x \ ?_a \ y) \equiv u \ ?_a \ y$ for $u, v, x, y : A$ and $a : \alpha$. Writing the product projections as $u, v, x, y : A^4 \times \alpha \to A$ and $a : A^4 \times \alpha \to \alpha$, this equation can be expressed as the commutativity of the diagram
% https://q.uiver.app/?q=WzAsNSxbMywwXSxbNSwwXSxbMCwwLCJBXjQgXFx0aW1lcyBcXGFscGhhIl0sWzQsMCwiQSBcXHRpbWVzIEEgXFx0aW1lcyBcXGFscGhhIl0sWzYsMCwiQSJdLFsyLDMsIlxcbGVmdFxcbGFuZ2xlID9eQSBcXGNpcmMgXFxsYW5nbGUgdSwgdiwgYVxccmFuZ2xlLCA/XkEgXFxjaXJjIFxcbGFuZ2xlIHgsIHksIGEgXFxyYW5nbGUsIGFcXHJpZ2h0XFxyYW5nbGUiXSxbMiw0LCI/XkEgXFxjaXJjIFxcbGFuZ2xlIHUsIHksIGFcXHJhbmdsZSIsMix7ImN1cnZlIjozfV0sWzMsNCwiP15BIl1d
\[\begin{tikzcd}
	{A^4 \times \alpha} &&& {} & {A \times A \times \alpha} & {} & A,
	\arrow["{\left\langle ?^A \circ \langle u, v, a\rangle, ?^A \circ \langle x, y, a \rangle, a\right\rangle}", from=1-1, to=1-5]
	\arrow["{?^A \circ \langle u, y, a\rangle}"', curve={height=18pt}, from=1-1, to=1-7]
	\arrow["{?^A}", from=1-5, to=1-7]
\end{tikzcd}\]
which is a diagrammatic $\cS$-equation $A^4 \times \alpha \rightrightarrows A$ ($A \in \cS\Alg$). The third equation $(?_a/?_b)$ is written as $(u \ ?_b \ v) \ ?_a \ (x \ ?_b \ y) \equiv (u \ ?_a \ x) \ ?_b \ (v \ ?_a \ y)$ for $u, v, x, y : A$ and $a, b : \alpha$. Writing the product projections as $u, v, x, y : A^4 \times \alpha^2 \to A$ and $a, b : A^4 \times \alpha^2 \to \alpha$, this equation can be expressed as the commutativity of the diagram
% https://q.uiver.app/?q=WzAsNixbMCwwLCJBXjQgXFx0aW1lcyBTXjIiXSxbMywwLCIoQV4yIFxcdGltZXMgUykgXFx0aW1lcyAoQV4yIFxcdGltZXMgUykgXFx0aW1lcyBTIl0sWzUsMCwiQSBcXHRpbWVzIEEgXFx0aW1lcyBTIl0sWzUsMSwiQSJdLFswLDEsIihBXjIgXFx0aW1lcyBTKSBcXHRpbWVzIChBXjIgXFx0aW1lcyBTKSBcXHRpbWVzIFMiXSxbMywxLCJBIFxcdGltZXMgQSBcXHRpbWVzIFMiXSxbMCwxLCJcXGxhbmdsZSBcXGxhbmdsZSB1LCB2LCB0IFxccmFuZ2xlLCBcXGxhbmdsZSB4LCB5LCB0XFxyYW5nbGUsIHNcXHJhbmdsZSJdLFsxLDIsIj9eQSBcXHRpbWVzID9eQSBcXHRpbWVzIDEiXSxbMiwzLCI/XkEiLDJdLFswLDQsIlxcbGFuZ2xlIFxcbGFuZ2xlIHUsIHgsIHNcXHJhbmdsZSwgXFxsYW5nbGUgdiwgeSwgc1xccmFuZ2xlLCB0XFxyYW5nbGUiXSxbNCw1LCI/XkEgXFx0aW1lcyA/XkEgXFx0aW1lcyAxIiwyXSxbNSwzLCI/XkEiLDJdXQ==
\[\begin{tikzcd}
	{A^4 \times \alpha^2} &&& {(A^2 \times \alpha) \times (A^2 \times \alpha) \times \alpha} && {A \times A \times \alpha} \\
	{(A^2 \times \alpha) \times (A^2 \times \alpha) \times \alpha} &&& {A \times A \times \alpha} && A,
	\arrow["{\langle \langle u, v, b \rangle, \langle x, y, b\rangle, a\rangle}", from=1-1, to=1-4]
	\arrow["{?^A \times ?^A \times 1}", from=1-4, to=1-6]
	\arrow["{?^A}"', from=1-6, to=2-6]
	\arrow["{\langle \langle u, x, a\rangle, \langle v, y, a\rangle, b\rangle}", from=1-1, to=2-1]
	\arrow["{?^A \times ?^A \times 1}"', from=2-1, to=2-4]
	\arrow["{?^A}"', from=2-4, to=2-6]
\end{tikzcd}\]
which is a diagrammatic $\cS$-equation $A^4 \times \alpha^2 \rightrightarrows A$ ($A \in \cS\Alg$). We thus obtain a diagrammatic $\SF(\V)$-presentation $\cP = (\cS, \sfE)$ for this $\alpha$-parametrized theory ($\alpha \in \ob\V$) of reading many bits, and \ref{vmnd_pres_by_ff_jpres} entails that $\cP$ presents a strongly finitary $\V$-monad $\T_\cP$ on $\V$ with $\T_\cP\Alg \cong \cP\Alg$ in $\V\CAT/\V$.  

Continuing to suppose that $\V$ is cartesian closed, now let $\J \hookrightarrow \V$ be any bounded and eleutheric \emph{system} of arities; for example, as in Example \ref{PPexamples}, we may take $\J = \SF(\V)$, or if $\V$ is locally $\alpha$-presentable as a cartesian closed category then we may take $\J = \V_\alpha$. Let $J \in \ob\J$ be a fixed arity. We define a diagrammatic $\J$-presentation for the $J$-parametrized theory of restriction \cite[\S V.B]{Statoninstances}. The free-form $\J$-signature $\cS$ will have a single operation symbol $\nu$ of arity $J$ and parameter $1$, so that an $\cS$-algebra is an object $A$ of $\V$ equipped with a morphism $\nu^A : A^J \to A$. The desired diagrammatic $\J$-presentation $\cP = (\cS, \sfE)$ will have diagrammatic $\cS$-equations corresponding to the equations of \cite{Statoninstances} annotated as (idem-$\nu$) and $(\nu/\nu)$. The first equation (idem-$\nu$) is written there as $\nu(\lambda j. x) \equiv x$ for $x : A$, and can be expressed as the commutativity of the diagram
% https://q.uiver.app/?q=WzAsMyxbMCwwLCJBIl0sWzIsMCwiQV5KIl0sWzQsMCwiQSJdLFswLDEsIlxcRGVsdGFfQV5KIl0sWzEsMiwiXFxudV5BIl0sWzAsMiwiMSIsMix7ImN1cnZlIjoyfV1d
\[\begin{tikzcd}
	A && {A^J} && A
	\arrow["{\Delta_A}", from=1-1, to=1-3]
	\arrow["{\nu^A}", from=1-3, to=1-5]
	\arrow["1"', curve={height=12pt}, from=1-1, to=1-5]
\end{tikzcd}\]
(where $\Delta_A : A \to A^J$ is the exponential transpose of the first projection $\pi_1 : A \times J \to A$), which is a diagrammatic $\cS$-equation $A \rightrightarrows A$ ($A \in \cS\Alg$). The second equation $(\nu/\nu)$ is written as $\nu\left(\lambda j. \nu\left(\lambda k. f(j, k)\right)\right) \equiv \nu\left(\lambda j. \nu\left(\lambda k. f(k, j)\right)\right)$ for $f : A^{J \times J}$, and it can be expressed as a diagrammatic $\cS$-equation of the form $A^{J \times J} \rightrightarrows A$ ($A \in \cS\Alg$) via the diagram
% https://q.uiver.app/?q=WzAsOCxbMCwwLCJBXntKIFxcdGltZXMgSn0iXSxbMSwwLCJcXGxlZnQoQV5KXFxyaWdodCleSiJdLFszLDAsIkFeSiJdLFs0LDAsIkEiXSxbMCwxLCJBXntKIFxcdGltZXMgSn0iXSxbMSwxLCJcXGxlZnQoQV5KXFxyaWdodCleSiJdLFszLDEsIkFeSiJdLFs0LDEsIkEiXSxbMCwxLCJcXHNpbSJdLFsxLDIsIlxcbGVmdChcXG51XkFcXHJpZ2h0KV5KIl0sWzIsMywiXFxudV5BIl0sWzAsNCwiQV5zIiwyXSxbNCw1LCJcXHNpbSIsMl0sWzUsNiwiXFxsZWZ0KFxcbnVeQVxccmlnaHQpXkoiLDJdLFs2LDcsIlxcbnVeQSIsMl0sWzMsNywiIiwwLHsib2Zmc2V0IjoxLCJzdHlsZSI6eyJib2R5Ijp7Im5hbWUiOiJub25lIn0sImhlYWQiOnsibmFtZSI6Im5vbmUifX19XSxbMyw3LCJcXHBhcmFsbGVsIiwxLHsic3R5bGUiOnsiYm9keSI6eyJuYW1lIjoibm9uZSJ9LCJoZWFkIjp7Im5hbWUiOiJub25lIn19fV1d
\[\begin{tikzcd}
	{A^{J \times J}} & {\left(A^J\right)^J} && {A^J} & A \\
	{A^{J \times J}} & {\left(A^J\right)^J} && {A^J} & A
	\arrow["\kappa_{JJ}", from=1-1, to=1-2]
	\arrow["\sim"', from=1-1, to=1-2]
	\arrow["{\left(\nu^A\right)^J}", from=1-2, to=1-4]
	\arrow["{\nu^A}", from=1-4, to=1-5]
	\arrow["{A^s}"', from=1-1, to=2-1]
	\arrow["\kappa_{JJ}", from=2-1, to=2-2]
	\arrow["\sim"', from=2-1, to=2-2]
	\arrow["{\left(\nu^A\right)^J}"', from=2-2, to=2-4]
	\arrow["{\nu^A}"', from=2-4, to=2-5]
	\arrow[shift right=1, draw=none, from=1-5, to=2-5]
	\arrow["\parallel"{description}, draw=none, from=1-5, to=2-5]
\end{tikzcd}\]
where $\kappa_{XY}:A^{X \times Y} \xrightarrow{\sim} (A^Y)^X$ denotes the canonical isomorphism $(X,Y \in \V)$, and $s : J \times J \xrightarrow{\sim} J \times J$ is the `twist' isomorphism. We thus obtain a diagrammatic $\J$-presentation $\cP = (\cS, \sfE)$ for this $J$-parametrized theory of restriction, and \ref{vmnd_pres_by_ff_jpres} entails that $\cP$ presents a $\J$-ary $\V$-monad $\T_\cP$ on $\V$ such that $\T_\cP\Alg \cong \cP\Alg$ in $\V\CAT/\V$. By combining the $J$-parametrized theory of restriction with the $\alpha$-parametrized theory of reading many bits, for $\alpha = J$, and adding a single additional equation that requires that $\nu$ \textit{commutes with} $?$ in the sense of \S\ref{sumtensor} below, we obtain a diagrammatic $\J$-presentation for Staton's parametrized theory of \textit{instantiating and reading bits} \cite[\S V.B]{Statoninstances}, assuming that $\J$ contains $\SF(\V)$.\qed        
}
\end{egg}

\section{Constructions on diagrammatic $\J$-presentations: Sum and tensor}\label{sumtensor}

We can define the \emph{sum} and \emph{tensor} of two diagrammatic (or free-form) $\J$-presentations $\cP_1$ and $\cP_2$ as follows. For each $i \in \{1, 2\}$, let us write $\cP_i = (\cS_i,\sfE_i)$, where $\cS_i$ is a free-form $\J$-signature with arities $J_{i\sigma}$ and parameters $C_{i\sigma}$ $(\sigma \in \cS_i)$.

To define the sum $\cP_1 + \cP_2$, we first define a free-form $\J$-signature $\cS_1 + \cS_2$ by equipping the disjoint union of the (small) sets of operation symbols $\cS_1$ and $\cS_2$ with the associated arities $J_{i\sigma}$ and parameters $C_{i\sigma}$ $(i \in \{1,2\}, \sigma \in \cS_i)$.  An $(\cS_1 + \cS_2)$-algebra is then an object $A$ of $\C$ carrying the structure of both an $\cS_1$-algebra and an $\cS_2$-algebra, and we readily deduce by \ref{universal_property_SAlg} that moreover $(\cS_1 + \cS_2)\Alg \cong \cS_1\Alg \times_\C \cS_2\Alg$ in $\V\CAT/\C$. For each $i \in \{1,2\}$, $\sfE_i$ is a family of diagrammatic $\cS_i$-equations $\omega^{i\delta} \doteq \nu^{i\delta}$ $(\delta \in \cD_i)$, each of which consists of morphisms $\omega^{i\delta}_A,\nu^{i\delta}_ A:\C(J_{i\delta},A) \rightrightarrows \C(C_{i\delta},A)$ that are $\V$-natural in $A \in \cS_i\Alg$ (where we assume without loss of generality that the sets $\cS_i$ and $\cD_i$ are disjoint). Given $i \in \{1,2\}$, if we denote the $\cS_i$-algebra underlying each $(\cS_1+\cS_2)$-algebra $A$ also by $A$, then for each $\delta \in \cD_i$ we obtain a parallel pair of $\V$-natural families $\omega^{i\delta}_A,\nu^{i\delta}_A:\C(J_{i\delta},A) \rightrightarrows \C(C_{i\delta},A)$ $(A \in (\cS_1+\cS_2)\Alg)$, so that we obtain a diagrammatic $(\cS_1+\cS_2)$-equation that we may write also as $\omega^{i\delta} \doteq \nu^{i\delta}$.  Collectively, these equations $\omega^{i\delta} \doteq \nu^{i\delta}$ $(i \in \{1,2\}, \delta \in \cD_i)$ constitute a system of diagrammatic $(\cS_1 + \cS_2)$-equations that we may write as $\sfE_1 + \sfE_2$.

Thus we obtain a diagrammatic $\J$-presentation $\cP_1 + \cP_2 := (\cS_1+\cS_2,\sfE_1+\sfE_2)$, for which a $(\cP_1+\cP_2)$-algebra is an object $A$ of $\C$ equipped with the structure of both a $\cP_1$-algebra and a $\cP_2$-algebra, and moreover $(\cP_1 + \cP_2)\Alg \cong \cP_1\Alg \times_\C \cP_2\Alg$ in $\V\CAT/\C$ as a consequence of the above isomorphism $(\cS_1 + \cS_2)\Alg \cong \cS_1\Alg \times_\C \cS_2\Alg$. 

Now supposing that $\C = \V$ and that $\J \hookrightarrow \V$ is a \emph{system} of arities (see \ref{sys_ar}), we next add further diagrammatic $(\cS_1+\cS_2)$-equations to the sum $\cP_1 + \cP_2$ to obtain the \emph{tensor product} $\cP_1 \tensor \cP_2$ of the diagrammatic $\J$-presentations $\cP_1, \cP_2$. To do this, we employ the following terminology: given any two parametrized operations $\omega_i:[J_i,A] \rightarrow [C_i,A]$ $(i \in \{1,2\})$ on an object $A$ of $\V$, where $J_i,C_i \in \ob\V$ and we write the internal hom in $\V$ as $[-,-]$, we obtain a pair of parametrized operations $$\omega_1 * \omega_2,\;\;\omega_1 \tstar \omega_2\;\;:\;\;[J_1 \otimes J_2,A] \rightrightarrows [C_1 \otimes C_2,A]$$
that are called the \textit{first and second Kronecker products} of $\omega_1$ and $\omega_2$ (cf. \cite{Commutants}) and are defined as the following composites, respectively: 
\[ [J_1 \tensor J_2, A] \cong [J_2, [J_1, A]] \xrightarrow{\left[1, \omega_1\right]} [J_2, [C_1, A]] \cong [C_1, [J_2, A]] \xrightarrow{\left[1, \omega_2\right]} [C_1, [C_2, A]] \cong [C_1 \tensor C_2, A], \]
\[ [J_1 \tensor J_2, A] \cong [J_1, [J_2, A]] \xrightarrow{\left[1, \omega_2\right]} [J_1, [C_2, A]] \cong [C_2, [J_1, A]] \xrightarrow{\left[1, \omega_1\right]} [C_2, [C_1, A]] \cong [C_1 \tensor C_2, A]. \] 
We say that $\omega_1$ \textit{commutes with} $\omega_2$ if $\omega_1 * \omega_2 = \omega_1 \tstar \omega_2$.

Given operation symbols $\sigma_1 \in \cS_1$ and $\sigma_2 \in \cS_2$, we can consider the parametrized operations $\sigma_i^A:[J_{i\sigma_i},A] \to [C_{i\sigma_i},A]$ $(i \in \{1,2\})$ carried by each $(\cS_1+\cS_2)$-algebra $A$.  The first and second Kronecker products of $\sigma_1^A$ and $\sigma_2^A$ are therefore morphisms
\begin{equation}\label{eq:kr_prods_for_s1s2alg}\sigma_1^A * \sigma_2^A,\;\;\sigma_1^A \tstar \sigma_2^A\;\;:\;\;[J_{1\sigma_1} \otimes J_{2\sigma_2},A] \rightrightarrows [C_{1\sigma_1} \otimes C_{2\sigma_2},A]\end{equation}
that are $\V$-natural in $A \in (\cS_1+\cS_2)\Alg$, in view of their definitions above, since $\sigma_1^A$ and $\sigma_2^A$ are $\V$-natural in $A \in (\cS_1+\cS_2)\Alg$ by \ref{signature_operation_natural}.  Hence, the morphisms in \eqref{eq:kr_prods_for_s1s2alg} constitute $\V$-natural $(\cS_1+\cS_2)$-operations $\sigma_1 * \sigma_2$ and $\sigma_1 \tstar \sigma_2$, so we obtain a diagrammatic $(\cS_1+\cS_2)$-equation $\sigma_1 * \sigma_2 \doteq \sigma_1 \tstar \sigma_2$.  Adjoining these equations $\sigma_1 * \sigma_2 \doteq \sigma_1 \tstar \sigma_2$ $(\sigma_1 \in \cS_1, \sigma_2 \in \cS_2)$ to $\sfE_1+\sfE_2$, we obtain a system of diagrammatic $(\cS_1+\cS_2)$-equations $\sfE_1 \otimes \sfE_2$.

Thus we obtain a diagrammatic $\J$-presentation $\cP_1 \otimes \cP_2 = (\cS_1+\cS_2,\sfE_1\otimes\sfE_2)$, for which a $(\cP_1 \otimes \cP_2)$-algebra is an object $A$ of $\V$ equipped with both the structure of a $\cP_1$-algebra and a $\cP_2$-algebra such that $\sigma_1^A$ commutes with $\sigma_2^A$ for each pair of operation symbols $\sigma_1 \in \cS_1$ and $\sigma_2 \in \cS_2$, and $(\cP_1 \otimes \cP_2)\Alg$ is isomorphic (in $\V\CAT/\V$) to the full sub-$\V$-category of $\cP_1\Alg \times_\V \cP_2\Alg$ consisting of these objects.  For example, by bringing together the diagrammatic $\J$-presentations $\cP_1$ and $\cP_2$ for \textit{reading many bits} and \textit{restriction} (\ref{Statonexamples}) and forming the tensor product $\cP_1 \otimes \cP_2$, we arrive at a diagrammatic $\J$-presentation for Staton's parametrized theory of \textit{instantiating and reading bits}; the diagrammatic equation expressing that $\nu$ commutes with $?$ corresponds to Staton's syntactic equation $(\nu/?)$ \cite[\S V.B]{Statoninstances}.

\section{$\J$-ary varieties of algebras for $\K$-relative monads}
\label{standardizedpresentations}

The notion of \textit{abstract clone} of Philip Hall provides an equivalent way of encoding the data for a Lawvere theory in terms of a family of sets $H(n)$ indexed by the finite cardinals $n$, with some further structure; see \cite{Cohn}.  Variants and generalizations of this concept include \textit{Kleisli triples} \cite{Manesbook}, \textit{enriched clones} \cite{Statonpredicate, Fioreclones}, and \textit{relative monads} \cite{Altenkirchmonads}.  By transporting the notion of relative monad to the setting of a subcategory of arities $\K \hookrightarrow \C$ enriched in $\V$, we obtain a notion of \textit{$\K$-relative $\V$-monad} (\ref{relativemonad}).  In the special case where $\K$ is eleutheric, $\K$-relative $\V$-monads provide an equivalent way of describing $\K$-ary $\V$-monads in terms of families of objects $H = (HK)_{K \in\,\ob\K}$ of $\C$ equipped with further structure (\ref{rel_mnds_case_k_equals_j}).  But $\K$-relative $\V$-monads are of interest also in cases where $\K$ is not eleutheric, and in the present section we do \textit{not} impose such an assumption on $\K$.  Instead, we consider the case where $\K$ is merely \textit{contained in} a bounded and eleutheric subcategory of arities $\J \hookrightarrow \C$ (under the assumptions of \ref{Vcategoryassumptions} and \ref{subcategoryaritiesassumptions}), and in this case we show that for each $\K$-relative $\V$-monad $H$, the $\V$-category of $H$-algebras $H\Alg$ is precisely the $\V$-category of $\cP_H$-algebras for a diagrammatic $\J$-presentation $\cP_H$, so that $H\Alg$ is a $\J$-ary variety and hence is $\J$-ary monadic over $\C$.  For example, this result is applicable to \textit{any} small subcategory of arities $\K$ in a locally presentable $\V$-category $\C$ over a locally presentable closed category $\V$, since in this case $\K$ is contained in the bounded and eleutheric subcategory of arities $\C_\alpha \hookrightarrow \C$ for a suitable $\alpha$ (\ref{runningexamples}, \ref{lpres}).  But the presentations $\cP_H$ obtained in this way are important even in the case where $\K = \J$, because by applying this construction to the $\J$-relative $\V$-monad corresponding to a $\J$-ary $\V$-monad $\T$ we obtain a diagrammatic $\J$-presentation of $\T$ that is more economical than the canonical $\J$-presentation of $\T$ obtained by way of the monadicity of $\J$-ary $\V$-monads (\ref{signatures}, \ref{presentations}); examples are discussed in \ref{internalaffinespaces}.

\begin{defn}
\label{relativemonad}
Given a full sub-$\V$-category $\K \hookrightarrow \C$, a \textbf{$\K$-relative $\V$-monad (on $\C$)} is a triple $(H, e, m)$ consisting of a $\V$-functor $H : \K \to \C$ and $\V$-natural families of morphisms $e_J : J \to HJ$ $(J \in \K)$ and $m_{JK} : \C(J, HK) \to \C(HJ, HK)$ ($J, K \in \K$) making the following diagrams commute for all $J, K, L \in \ob\K$:
% https://q.uiver.app/?q=WzAsMTEsWzAsMCwiSSJdLFsyLDAsIlxcQyhKLCBUSikiXSxbNCwwLCJcXEMoVEosIFRKKSJdLFswLDEsIlxcQyhKLCBUSykiXSxbMiwxLCJcXEMoVEosIFRLKSJdLFs0LDEsIlxcQyhKLCBUSykiXSxbMCwyLCJcXEMoSiwgVEspIFxcb3RpbWVzIFxcQyhLLCBUTCkiXSxbNCwyLCJcXEMoVEosIFRLKSBcXG90aW1lcyBcXEMoVEssIFRMKSJdLFswLDMsIlxcQyhKLCBUSykgXFxvdGltZXMgXFxDKFRLLCBUTCkiXSxbMCw0LCJcXEMoSiwgVEwpIl0sWzQsNCwiXFxDKFRKLCBUTCkiXSxbMCwxLCJcXHdpZGVoYXR7XFxldGFfSn0iXSxbMSwyLCJcXG11X3tKSn0iXSxbMCwyLCJ1X3tUSn0iLDIseyJjdXJ2ZSI6Mn1dLFszLDQsIlxcbXVfe0pLfSJdLFs0LDUsIlxcQ1xcbGVmdChcXGV0YV9KLCAxXFxyaWdodCkiXSxbMyw1LCIxIiwyLHsiY3VydmUiOjJ9XSxbNiw3LCJcXG11X3tKS30gXFxvdGltZXMgXFxtdV97Skx9IiwyXSxbNiw4LCIxIFxcb3RpbWVzIFxcbXVfe0tMfSJdLFs4LDksImMiXSxbNywxMCwiYyIsMl0sWzksMTAsIlxcbXVfe0pMfSIsMl1d
\[\begin{tikzcd}
	I & {\C(J, HJ)} & {\C(HJ, HJ)} & &  \\
	{\C(J, HK)} & {\C(HJ, HK)}& {\C(J, HK)} & &\\
	{\C(J, HK) \otimes \C(K, HL)} && {\C(HJ, HK) \otimes \C(HK, HL)} \\
	{\C(J, HK) \otimes \C(HK, HL)} & {\C(J, HL)} & {\C(HJ, HL)},	
	\arrow["{e_J}", from=1-1, to=1-2]
	\arrow["{m_{JJ}}", from=1-2, to=1-3]
	\arrow["{1_{HJ}}"', curve={height=14pt}, from=1-1, to=1-3]
	\arrow["{m_{JK}}", from=2-1, to=2-2]
	\arrow["{\C\left(e_J, 1\right)}", from=2-2, to=2-3]
	\arrow["1"', curve={height=14pt}, from=2-1, to=2-3]
	\arrow["{m_{JK} \otimes m_{KL}}", from=3-1, to=3-3]
	\arrow["{1 \otimes m_{KL}}", from=3-1, to=4-1]
	\arrow["c", from=4-1, to=4-2]
	\arrow["c", from=3-3, to=4-3]
	\arrow["{m_{JL}}", from=4-2, to=4-3]
\end{tikzcd}\]
where each $c$ is a composition morphism of $\C$. \qed 
\end{defn}

\begin{rmk}
\label{relative_mnd_rmk}
In Definition \ref{relativemonad}, it actually suffices to just assume that we have a family of objects $(HJ)_{J \in \ob\K}$ of $\C$ and families of morphisms $e_J : J \to HJ$ $(J \in \ob\K)$ and $m_{JK} : \C(J, HK) \to \C(HJ, HK)$ $(J, K \in \ob\K)$ making the diagrams of \ref{relativemonad} commute, because then (as in the ordinary context \cite{Altenkirchmonads}) it follows straightforwardly that the object assignment $J \mapsto HJ$ ($J \in \ob\K$) extends to a $\V$-functor $H : \K \to \C$, with respect to which the families of morphisms $e_J$ and $m_{JK}$ are $\V$-natural. \qed 
\end{rmk}

\begin{para}\label{rel_mnds_case_k_equals_j}
In the special case where $\K$ is a subcategory of arities $\K=\J \hookrightarrow \C$ satisfying the assumptions of \ref{Vcategoryassumptions} and \ref{subcategoryaritiesassumptions} (including eleuthericity\footnote{In fact, the remarks in this paragraph apply when $\J$ is an arbitrary (possibly large) eleutheric subcategory of arities in an arbitrary $\V$-category $\C$ over an arbitrary closed category $\V$.}), there is an equivalence between $\J$-relative $\V$-monads and $\J$-ary $\V$-monads.  To see this, first note that $\V\CAT(\J, \C)$ is a monoidal category, whose monoidal structure is defined by transporting the monoidal structure of $\End_{\underJ}(\C)$ along the equivalence $\V\CAT(\J, \C) \simeq \End_{\underJ}(\C)$ discussed in \ref{Jary}. Thus, the unit of the monoidal category $\V\CAT(\J, \C)$ is the $\V$-functor $j : \J \hookrightarrow \C$, while the monoidal product of $H, H' : \J \to \C$ is defined by $H \ast H' := (\Lan_jH) \circ H'$. A monoid in the monoidal category $\V\CAT(\J, \C)$ is thus a triple $(H, e, m)$ consisting of a $\V$-functor $H : \J \to \C$ and $\V$-natural transformations $e : j \to H$ and $m : H \ast H \to H$ satisfying the associativity and unit equations.  Unpacking $m$, we have for each $K \in \ob\J$ a morphism $m_K : (H \ast H)K \to HK$, where $(H \ast H)K = \left(\Lan_jH\right)HK = \int^{J \in \J} \C(J, HK) \tensor HJ$ by the coend formula for left Kan extensions \cite[4.25]{Kelly}. Hence $m$ is equivalently given by a family of morphisms $m_{JK} : \C(J, HK) \tensor HJ \to HK$ or equivalently $m_{JK} : \C(J, HK) \to \C(HJ, HK)$, $\V$-natural in $J,K \in \J$.  Expressing $m$ in this way, the unit and associativity axioms for a monoid $(H,e,m)$ in $\V\CAT(\J, \C)$ translate into the axioms for a $\J$-relative $\V$-monad in \ref{relativemonad}.  Thus, $\J$-relative $\V$-monads may be identified with monoids in $\V\CAT(\J,\C)$ and so are the objects of a category that is equivalent to the category of $\J$-ary $\V$-monads on $\C$.  In particular, if $(H, e, m)$ is a $\J$-relative $\V$-monad on $\C$, then the induced $\V$-endofunctor $\Lan_j H : \C \to \C$ underlies a $\J$-ary $\V$-monad on $\C$, and if $\T$ is a $\J$-ary $\V$-monad on $\C$, then the $\V$-functor $Tj : \J \to \C$ underlies a $\J$-relative $\V$-monad.  \qed
\end{para}

\noindent The following is an enrichment (and a slight variation, \ref{relative_mnd_algebra_rmk}) of a definition in \cite{Altenkirchmonads}:

\begin{defn}
\label{relativemonadalgebra}
Let $\K \hookrightarrow \C$ be a full sub-$\V$-category, and let $H = (H, e, m)$ be a $\K$-relative $\V$-monad on $\C$. An \textbf{$H$-algebra} is an object $A$ of $\C$ equipped with a $\V$-natural family of morphisms $\alpha_J : \C(J, A) \to \C(HJ, A)$ ($J \in \K$) making the following diagrams commute for all $J, K \in \ob\K$:
\begin{equation}\label{eq:relative_alg_1}
\begin{tikzcd}
	{\C(J, A)} && {\C(HJ, A)} && {\C(J, A)}
	\arrow["{\alpha_J}", from=1-1, to=1-3]
	\arrow["{\C\left(e_J, 1\right)}", from=1-3, to=1-5]
	\arrow["1"', curve={height=12pt}, from=1-1, to=1-5]
\end{tikzcd}
\end{equation}
% https://q.uiver.app/?q=WzAsNixbMCwwLCJcXEMoSiwgSEspIFxcdGVuc29yIFxcQyhLLCBBKSJdLFsyLDAsIlxcQyhISiwgSEspIFxcdGVuc29yIFxcQyhLLCBBKSJdLFs0LDAsIlxcQyhISiwgSEspIFxcdGVuc29yIFxcQyhISywgQSkiXSxbNCwxLCJcXEMoSEosIEEpIl0sWzAsMSwiXFxDKEosIEhLKSBcXHRlbnNvciBcXEMoSEssIEEpIl0sWzIsMSwiXFxDKEosIEEpIl0sWzAsMSwibV97Skt9IFxcdGVuc29yIDEiXSxbMSwyLCIxIFxcdGVuc29yIFxcYWxwaGFfSyJdLFsyLDMsImMiXSxbMCw0LCIxIFxcdGVuc29yIFxcYWxwaGFfSyIsMl0sWzQsNSwiYyIsMl0sWzUsMywiXFxhbHBoYV9KIiwyXV0=
\begin{equation}
\label{eq:relative_alg_2}
\begin{tikzcd}
	{\C(J, HK) \tensor \C(K, A)} &  & {\C(HJ, HK) \tensor \C(HK, A)} \\
	{\C(J, HK) \tensor \C(HK, A)} & {\C(J, A)} & {\C(HJ, A)},
	\arrow["{m_{JK} \tensor \alpha_K}", from=1-1, to=1-3]
	\arrow["c", from=1-3, to=2-3]
	\arrow["{1 \tensor \alpha_K}"', from=1-1, to=2-1]
	\arrow["c"', from=2-1, to=2-2]
	\arrow["{\alpha_J}"', from=2-2, to=2-3]
\end{tikzcd}
\end{equation}
where each $c$ is a composition morphism of $\C$. \qed
\end{defn} 

\begin{rmk}
\label{relative_mnd_algebra_rmk}
In Definition \ref{relativemonadalgebra}, we may omit the requirement that the morphisms $\alpha_J$ are $\V$-natural in $J \in \K$, as this follows automatically from the commutativity of the diagrams in \ref{relativemonadalgebra} (cf.~\cite{Altenkirchmonads} for the same fact in the ordinary context).  Note also that the structural morphisms $\alpha_J$ of an $H$-algebra may equivalently be written as $\alpha_J : \C(J, A) \tensor HJ \to A$.  \qed    
\end{rmk}

\begin{para}\label{case_j_equals_k_halg}
In the special case of \ref{rel_mnds_case_k_equals_j} where $\K = \J \hookrightarrow \C$, the equivalence of monoidal categories $\V\CAT(\J, \C) \simeq \End_{\underJ}(\C)$ induces an \textit{action} of the monoidal category $\V\CAT(\J,\C)$ on the $\V$-category $\C$, in the form of a strong monoidal functor $\Phi:\V\CAT(\J,\C) \rightarrow \V\CAT(\C,\C)$ given by left Kan extension.  If $H = (H,e,m)$ is a $\J$-relative $\V$-monad, regarded as a monoid in $\V\CAT(\J,\C)$ (\ref{rel_mnds_case_k_equals_j}), then the corresponding $\J$-ary $\V$-monad is the monoid $\Phi(H)$ in $\V\CAT(\C,\C)$ obtained canonically by applying $\Phi$.  An $H$-algebra is equivalently given by an algebra for the $\V$-monad $\Phi(H)$, which from the point of view of actions of monoidal categories we may call an \textit{$H$-module in $\C$} for the monoid $H$ in $\V\CAT(\J,\C)$. \qed
\end{para}

\noindent The algebras for a $\K$-relative $\V$-monad can be described in terms of diagrammatic $\K$-presentations in the sense of \ref{palg_arbitrary_j}:

\begin{prop}\label{rel_mnd_pres}
Let $H = (H, e, m)$ be a $\K$-relative $\V$-monad for a small full sub-$\V$-category $\K \hookrightarrow \C$. Then there is a diagrammatic $\K$-presentation $\cP_H$ such that $\cP_H$-algebras are precisely $H$-algebras.
\end{prop}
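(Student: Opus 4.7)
The plan is to directly translate the data and axioms of the $\K$-relative $\V$-monad $H = (H, e, m)$ into a free-form $\K$-signature together with a system of diagrammatic equations, invoking the generality of \ref{palg_arbitrary_j} (since $\K$ is not assumed to be eleutheric or bounded).

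First, I define the free-form $\K$-signature $\cS_H$ to have one operation symbol $\sigma_J$ for each $J \in \ob\K$, with arity $J \in \ob\K$ and parameter $HJ \in \ob\C$; since $\K$ is small, this is indeed a free-form $\K$-signature. Unpacking \ref{ff_signature_algebra}, an $\cS_H$-algebra is precisely an object $A$ of $\C$ equipped with a family of morphisms $\sigma_J^A : \C(J, A) \to \C(HJ, A)$ indexed by $J \in \ob\K$, which matches the underlying data of a candidate $H$-algebra structure $(\alpha_J)_{J \in \ob\K}$ on $A$.

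Next, I encode the two axioms of \ref{relativemonadalgebra} as diagrammatic $\cS_H$-equations. For each $J \in \ob\K$, axiom \eqref{eq:relative_alg_1} becomes the diagrammatic equation $\C(e_J, 1) \circ \sigma_J \doteq 1 : \C(J, -) \rightrightarrows \C(J, -)$ on $\cS_H\Alg$, with arity $J$ and parameter $J$; both sides are $\V$-natural in $A \in \cS_H\Alg$ by \ref{signature_operation_natural}, the left-hand side because it is a composite of the $\V$-natural $\cS_H$-operation $\sigma_J$ with the $A$-independent morphism $\C(e_J, 1)$. For each pair $(J, K) \in \ob\K \times \ob\K$, axiom \eqref{eq:relative_alg_2} is originally a parallel pair $\C(J, HK) \otimes \C(K, A) \rightrightarrows \C(HJ, A)$ of morphisms $\V$-natural in $A$; under the tensor-hom adjunction in $\V$ combined with the tensored structure of $\C$ (which yield a canonical isomorphism $\V(\C(J, HK), \C(HJ, A)) \cong \C(\C(J, HK) \otimes HJ, A)$), this pair corresponds bijectively to a parallel pair $\C(K, A) \rightrightarrows \C(\C(J, HK) \otimes HJ, A)$, which I take as a diagrammatic $\cS_H$-equation with arity $K \in \ob\K$ and parameter $\C(J, HK) \otimes HJ \in \ob\C$. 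Again, both sides are $\V$-natural in $A \in \cS_H\Alg$ by \ref{signature_operation_natural}, since each is built by composing the $\V$-natural $\cS_H$-operations $\sigma_J$ and $\sigma_K$ with $A$-independent morphisms (supplied by $m_{JK}$, the composition of $\C$, and the tensor-hom correspondences).

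Collecting these equations into a small family $\sfE_H$ indexed by $\ob\K \sqcup (\ob\K \times \ob\K)$ yields the desired diagrammatic $\K$-presentation $\cP_H := (\cS_H, \sfE_H)$. By construction, a $\cP_H$-algebra is an $\cS_H$-algebra satisfying both translated axioms, i.e. precisely an $H$-algebra in the sense of \ref{relativemonadalgebra}; here I use the observation of \ref{relative_mnd_algebra_rmk} that the $\V$-naturality in $J \in \K$ of the structural family $(\alpha_J)_J$ follows automatically from the two axioms, so it need not be separately encoded. The only non-routine point is the $\V$-naturality in $A \in \cS_H\Alg$ of both sides of each equation, which is handled uniformly by the observation that $A$-dependence enters each side only through the operation symbols of $\cS_H$.
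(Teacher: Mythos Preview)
Your proposal is correct and follows essentially the same approach as the paper: the same free-form $\K$-signature (one operation symbol per $J \in \ob\K$ with arity $J$ and parameter $HJ$), the same encoding of axiom \eqref{eq:relative_alg_1} as an equation with arity and parameter $J$, the same transposition of axiom \eqref{eq:relative_alg_2} into an equation with arity $K$ and parameter $\C(J,HK) \otimes HJ$, and the same appeal to \ref{relative_mnd_algebra_rmk} to avoid encoding $\V$-naturality separately. Your write-up is in fact slightly more explicit than the paper's about why both sides of each equation are $\V$-natural in $A$ (via \ref{signature_operation_natural}) and about the tensor-hom manipulation used for the second axiom.
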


\begin{proof}
The free-form $\K$-signature $\cS$ for $H$-algebras will have an operation symbol $\alpha_J$ of arity $J \in \ob\K$ and parameter $HJ \in \ob\C$ for each $J \in \ob\K$, so that an $\cS$-algebra is an object $A$ of $\C$ equipped with structural morphisms $\alpha_J^A : \C(J, A) \to \C(HJ, A)$ for all $J \in \ob\K$. For each object $J$ of $\K$, the diagram \eqref{eq:relative_alg_1} describes a  diagrammatic $\cS$-equation $\C\left(J, A\right) \rightrightarrows \C\left(J, A\right)$ ($A \in \cS\Alg$). For each pair $J, K$ of objects of $\K$, the diagram \eqref{eq:relative_alg_2} describes a diagrammatic $\cS$-equation $\C\left(K, A\right) \rightrightarrows \C\left(\C(J, HK) \tensor HJ, A\right)$ ($A \in \cS\Alg$). We thus obtain a diagrammatic $\K$-presentation $\cP_H = (\cS, \sfE)$, and $\cP_H$-algebras are precisely $H$-algebras, in view of \ref{relative_mnd_algebra_rmk}.
\end{proof}

\begin{defn}\label{defn_vcat_halg}
Given a $\K$-relative $\V$-monad $H$ for a small full sub-$\V$-category $\K \hookrightarrow \C$, the \textbf{$\V$-category of $H$-algebras} is by definition the $\V$-category $H\Alg := \cP_H\Alg$ of $\cP_H$-algebras (\ref{palg_arbitrary_j}, \ref{rel_mnd_pres}).
\end{defn}

\begin{rmk}\label{halg_phih_alg}
In the special case of \ref{rel_mnds_case_k_equals_j} and \ref{case_j_equals_k_halg}, where $\K = \J$, the $\V$-category $H\Alg$ is isomorphic to the $\V$-category $\Phi(H)\Alg$ of algebras for the associated $\J$-ary $\V$-monad $\Phi(H)$, which we may equally call the \textit{$\V$-category of $H$-modules in $\C$} for the monoid $H$ in $\V\CAT(\J,\C)$. \qed
\end{rmk}

\begin{theo}\label{rel_mnd_thm}
Let $\C$ and $\V$ be as in \ref{Vcategoryassumptions}, and let $\K \hookrightarrow \C$ be a full sub-$\V$-category that is contained in some bounded and eleutheric subcategory of arities $\J \hookrightarrow \C$. For each $\K$-relative $\V$-monad $H$, the $\V$-category $H\Alg$ of $H$-algebras is strictly $\J$-monadic over $\C$ (as it is a $\J$-ary variety).
\end{theo}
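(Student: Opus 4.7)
The plan is to combine Proposition \ref{rel_mnd_pres} with Theorem \ref{vmnd_pres_by_ff_jpres}, via the observation that every diagrammatic $\K$-presentation can be reinterpreted as a diagrammatic $\J$-presentation whenever $\K \hookrightarrow \J$. Since the proof is essentially a two-line assembly of existing results, there is no substantive obstacle; the only point that needs to be articulated is why enlarging the subcategory of arities from $\K$ to $\J$ does not change the notion of algebra.

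First, I would invoke Proposition \ref{rel_mnd_pres} to produce a diagrammatic $\K$-presentation $\cP_H = (\cS,\sfE)$ whose algebras are precisely the $H$-algebras, so that by Definition \ref{defn_vcat_halg} we have $H\Alg = \cP_H\Alg$ as $\V$-categories over $\C$.

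Next, I would observe that since $\ob\K \subseteq \ob\J$, the free-form $\K$-signature $\cS$ (with operation symbols of arities $J_\sigma \in \ob\K$ and parameters $C_\sigma \in \ob\C$) directly qualifies as a free-form $\J$-signature, and the constructions of $\cS\Alg$ and $U^\cS : \cS\Alg \to \C$ from \ref{ff_signature_algebra} depend only on the objects $J_\sigma, C_\sigma \in \ob\C$ and so are unaffected by this change of viewpoint. Likewise, each diagrammatic $\cS$-equation in $\sfE$ is a $\V$-natural parallel pair $\omega,\nu : \C(J,U^\cS-) \rightrightarrows \C(C,U^\cS-)$ with $J \in \ob\K \subseteq \ob\J$ and $C \in \ob\C$, hence a diagrammatic $\cS$-equation in the $\J$-sense. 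Therefore $\cP_H$ is canonically a diagrammatic $\J$-presentation whose $\V$-category of algebras coincides with $H\Alg$ in $\V\CAT/\C$.

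Finally, since $\J \hookrightarrow \C$ is bounded and eleutheric and $\V$, $\C$ satisfy the standing assumptions of \ref{Vcategoryassumptions}, Theorem \ref{vmnd_pres_by_ff_jpres} applies to this $\J$-presentation to yield that $\cP_H\Alg$ is strictly $\J$-monadic over $\C$; it is moreover a $\J$-ary variety by Definition \ref{Jaryvariety}. Transporting along the identification $H\Alg = \cP_H\Alg$ gives the result.
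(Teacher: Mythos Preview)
Your proposal is correct and follows essentially the same route as the paper: obtain the diagrammatic $\K$-presentation $\cP_H$ from Proposition \ref{rel_mnd_pres}, regard it as a diagrammatic $\J$-presentation since $\ob\K \subseteq \ob\J$, and then apply Theorem \ref{vmnd_pres_by_ff_jpres}. Your version is in fact more explicit than the paper's, which simply asserts that $\cP_H$ may be regarded as a diagrammatic $\J$-presentation without spelling out why the $\V$-category of algebras is unchanged.
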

\begin{proof}
Under the given hypotheses, we may regard the diagrammatic $\K$-presentation $\cP_H$ also as a diagrammatic $\J$-presentation, and the result follows from Theorem \ref{vmnd_pres_by_ff_jpres}.
\end{proof}

\begin{egg}\label{lpres}
Let $\V$ be a locally $\alpha$-presentable closed category and $\C$ a locally $\alpha$-presentable $\V$-category.  Then any small full sub-$\V$-category $\K \hookrightarrow \C$ is contained in a bounded and eleutheric subcategory of arities $\J \hookrightarrow \C$.  Indeed, by \cite[7.4]{Kellystr} we may choose some regular cardinal $\beta \geq \alpha$ such that every object of $\K$ is $\beta$-presentable in the enriched sense, and hence $\K$ is contained in the bounded and eleutheric subcategory of arities $\J = \C_\beta \hookrightarrow \C$ discussed in \ref{runningexamples}(1), where we noted that these data satisfy the assumptions of \ref{Vcategoryassumptions} and \ref{subcategoryaritiesassumptions}.  Hence, Theorem \ref{rel_mnd_thm} entails the following:
\end{egg}

\begin{cor}
Let $\V$ be a locally $\alpha$-presentable closed category, let $\C$ be a locally $\alpha$-presentable $\V$-category, and let $\K \hookrightarrow \C$ be a small full sub-$\V$-category.  Then there is a regular cardinal $\beta \geq \alpha$ such that for each $\K$-relative $\V$-monad $H$ on $\C$, the $\V$-category of $H$-algebras $H\Alg$ is strictly $\beta$-ary monadic over $\C$. \qed
\end{cor}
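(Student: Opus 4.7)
The proof will be a direct combination of Example \ref{lpres} and Theorem \ref{rel_mnd_thm}. The plan is to produce a single regular cardinal $\beta \geq \alpha$ that works uniformly for every $\K$-relative $\V$-monad $H$ on $\C$, noting that $\beta$ depends only on $\K$ (not on $H$), since $\K$ is specified in advance.

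First, I would invoke \cite[7.4]{Kellystr} as in Example \ref{lpres}: since $\K$ is a small full sub-$\V$-category of the locally $\alpha$-presentable $\V$-category $\C$, and every object of $\C$ is a $\V$-enriched $\gamma$-filtered colimit of $\alpha$-presentable objects for sufficiently large $\gamma$, there exists a regular cardinal $\beta \geq \alpha$ such that every object of $\K$ is enriched $\beta$-presentable in $\C$. Hence $\K$ is contained in (a skeleton of) $\C_\beta$, the full sub-$\V$-category of enriched $\beta$-presentable objects.

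Next, I would observe that the subcategory of arities $\J := \C_\beta \hookrightarrow \C$ satisfies the blanket assumptions of \ref{Vcategoryassumptions} and \ref{subcategoryaritiesassumptions}: it is bounded and eleutheric, as recorded in \ref{runningexamples}(1). Since $\K \hookrightarrow \J$, the full sub-$\V$-category $\K$ is contained in a bounded and eleutheric subcategory of arities, so the hypotheses of Theorem \ref{rel_mnd_thm} are met (with this $\J$).

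Finally, for any $\K$-relative $\V$-monad $H$ on $\C$, Theorem \ref{rel_mnd_thm} yields that $H\Alg$ is strictly $\J$-monadic over $\C$, i.e. strictly $\C_\beta$-monadic, which is the same as being strictly $\beta$-ary monadic over $\C$. There is no serious obstacle here; the only subtlety worth emphasizing is that the choice of $\beta$ is made once from $\K$ alone, so that the same $\beta$ witnesses strict $\beta$-ary monadicity of $H\Alg$ for every $\K$-relative $\V$-monad $H$ simultaneously.
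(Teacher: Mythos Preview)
Your proposal is correct and follows essentially the same route as the paper: the corollary is stated immediately after Example \ref{lpres} with the remark ``Hence, Theorem \ref{rel_mnd_thm} entails the following,'' and carries only a \qed, so the intended argument is precisely to use \ref{lpres} to embed $\K$ in $\J = \C_\beta$ and then apply \ref{rel_mnd_thm}. Your added observation that $\beta$ depends only on $\K$ (and not on $H$) is exactly the point needed to justify the order of quantifiers in the statement.
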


\begin{defn}
\label{standardizedpresentation}
Returning to the setting of a given subcategory of arities $\J \hookrightarrow \C$ satisfying the assumptions in \ref{Vcategoryassumptions} and \ref{subcategoryaritiesassumptions}, let $\T$ be a $\J$-ary $\V$-monad on $\C$, recalling that $Tj:\J \rightarrow \C$ then carries the structure of a $\J$-relative $\V$-monad that we denote simply by $Tj$ (\ref{rel_mnds_case_k_equals_j}).  By \ref{defn_vcat_halg} and \ref{halg_phih_alg}, we have a diagrammatic $\J$-presentation $\cP_{Tj}$ with $\cP_{Tj}\Alg = Tj\Alg \cong \T\Alg$ in $\V\CAT/\C$. We call $\cP_{Tj}$ the \textbf{standardized $\J$-presentation of $\T$}. Note that a $\cP_{Tj}$-algebra is an object $A$ of $\C$ equipped with morphisms $\C(J,A) \rightarrow \C(TJ,A)$ or $\C(J,A) \otimes TJ \rightarrow A$ $(J \in \ob\J)$ satisfying associativity and unit axioms (\ref{relativemonadalgebra}, \ref{relative_mnd_algebra_rmk}).\qed   
\end{defn}

\begin{para}
\label{standardizedpresentationJtheory}
{
Let $\C = \V$, and suppose that $\J \hookrightarrow \V$ is a \emph{system} of arities.  By \ref{sys_ar}, $\Mnd_{\underJ}(\V)$ is equivalent to the category $\Th_{\underJ}$ of $\J$-theories. Let $\scrT$ be a $\J$-theory with associated identity-on-objects $\J$-cotensor-preserving $\V$-functor $\tau : \J^\op \to \scrT$. Writing $\T$ for the $\J$-ary $\V$-monad on $\V$ corresponding to $\scrT$, we may consider also the $\J$-relative $\V$-monad on $\V$ corresponding to $\T$, whose underlying $\V$-functor is $\scrT(\tau-, I) : \J \to \V$ (see \cite[10.3]{Commutants}). By \ref{standardizedpresentation}, this $\J$-relative $\V$-monad determines a diagrammatic $\J$-presentation $\cP$ for which $\cP\Alg$ is isomorphic to the $\V$-category of $\T$-algebras and hence to the $\V$-category of normal $\scrT$-algebras, which is in turn equivalent to the $\V$-category of (all) $\scrT$-algebras (see \ref{sys_ar}). A $\cP$-algebra is then given by an object $A$ of $\V$ equipped with structural morphisms $\alpha_J^A : \scrT(J, I) \to \V(\V(J, A), A)$ or $\alpha_J^A:\scrT(J,I) \otimes \V(J,A) \rightarrow A$ ($J \in \ob\J$) satisfying associativity and unit axioms. \qed 
}
\end{para}

\begin{egg}
\label{internalaffinespaces}
As in \ref{Rmodules}, let $\V$ be a complete and cocomplete cartesian closed category, and let $R$ be an internal rig in $\V$. Employing the system of arities $\SF(\V) \hookrightarrow \V$ (\ref{sys_ar}), in \ref{Rmodules} we exhibited a diagrammatic $\SF(\V)$-presentation whose algebras are the (left) $R$-modules in $\V$.  We may also regard $R$-modules equivalently as the normal algebras of an $\SF(\V)$-theory, namely the \textit{$\V$-category of $R$-matrices} $\mathsf{Mat}_R$, which has hom-objects $\mathsf{Mat}_R(m,n) = R^{n \times m}$ $(m,n \in \N)$, with composition given by internal matrix multiplication \cite[6.4.6]{functional}.  Invoking \ref{standardizedpresentationJtheory}, we thus obtain another diagrammatic $\SF(\V)$-presentation for $R$-modules. As in \ref{standardizedpresentationJtheory}, an algebra for this diagrammatic $\SF(\V)$-presentation is an object $A$ of $\V$ equipped with structural morphisms $\alpha_n^A : \mathsf{Mat}_R(n, 1) \times A^n \to A$, i.e. $\alpha_n^A : R^n \times A^n \to A$, for all $n \in \N$, satisfying associativity and unit axioms, since $\mathsf{Mat}_R(n, 1) = R^{1 \times n} = R^n$. The structural morphism $\alpha_n^A : R^n \times A^n \to A$ can be regarded as the parametrized operation of taking $n$-ary $R$-linear combinations in $A$.     

As in \cite[8.5]{functional}, there is a (non-full) sub-$\V$-category $\mathsf{Mat}_R^\aff$ of $\mathsf{Mat}_R$ whose objects are again the natural numbers and whose hom-objects are subobjects $\mathsf{Mat}_R^\aff(m,n) \hookrightarrow R^{n \times m}$ that describe those matrices in which each row has sum $1$.  By \cite[8.5]{functional}, $\mathsf{Mat}_R^\aff$ is an $\SF(\V)$-theory whose normal algebras in $\V$ are called \emph{(internal left) $R$-affine spaces}; also see \cite[11.2.7]{Pres}. Hence, by \ref{standardizedpresentationJtheory}, we obtain a diagrammatic $\SF(\V)$-presentation $\cP$ for which the associated notion of $\cP$-algebra provides an equivalent way of defining the notion of internal $R$-affine space, as an object $A$ of $\V$ equipped with structural morphisms $\alpha_n^A : R^{n,\aff} \times A^n \to A$, satisfying associativity and unit axioms, where $R^{n,\aff} := \mathsf{Mat}_R^\aff(n,1) \hookrightarrow R^n$ is the subobject of $n$-tuples of $R$ whose sum is $1$.  The structural morphism $\alpha_n^A$ can thus be regarded as the parametrized operation of taking $n$-ary \textit{$R$-affine combinations} in $A$. We call $R\textnormal{-}\mathsf{Aff} := \cP\Alg$ \textit{the $\V$-category of (left) $R$-affine spaces}. \qed     
\end{egg}

\section{A more general monad-pretheory adjunction}
\label{monadpretheory}

In the recent paper \cite{BourkeGarner}, Bourke and Garner fix a small subcategory of arities $k : \K \hookrightarrow \C$ in a locally presentable $\V$-category $\C$ over a locally presentable closed category $\V$, and they consider the notion of a \emph{$\K$-pretheory}, which is by definition a $\V$-category $\scrT$ equipped with a $\V$-functor $\tau : \K^\op \to \scrT$ that is identity-on-objects\footnote{In \cite{BourkeGarner}, the dual notion is used, while in the present paper we employ a notion of pretheory that accords with the concept previously studied (in the unenriched context) by Linton \cite[p. 20]{Lintonoutline} under the name \emph{clone}, and later by Diers \cite{Diers} under the name \emph{theory}.}, so that $\K$-pretheories are the objects of a full subcategory $\Preth_\K(\C)$ of the coslice $\K^\op\slash\V\textnormal{-}\mathsf{Cat}$.  Letting $\Mnd(\C)$ be the category of $\V$-monads on $\C$, they define a functor $\Phi : \Mnd(\C) \to \Preth_\K(\C)$ that sends a $\V$-monad $\T$ on $\C$ to the \emph{Kleisli $\K$-pretheory} $\K_\T^\op$, i.e. the opposite of the full sub-$\V$-category $\K_\T$ of the Kleisli $\V$-category $\C_\T$ on the objects of $\K$, equipped with the $\V$-functor $\K^\op \rightarrow \K_\T^\op$ obtained by restricting the Kleisli left adjoint $F_\T:\C \rightarrow \C_\T$. By \cite[Theorem 6]{BourkeGarner}, $\Phi$ has a left adjoint $\Psi : \Preth_\K(\C) \to \Mnd(\C)$ that sends each $\K$-pretheory $\scrT$ to a $\V$-monad $\Psi(\scrT)$ whose $\V$-category of algebras $\Psi(\scrT)\Alg$ is isomorphic (in $\V\CAT/\C$) to the $\V$-category $\MOD_c(\scrT)$ of \emph{concrete $\scrT$-models}. This $\V$-category is defined\footnote{This definition (in the unenriched context) goes back to Linton \cite{Lintonoutline} and Diers \cite{Diers}.} as the following pullback in $\V\CAT$
% https://q.uiver.app/?q=WzAsNCxbMCwwLCJcXE1PRF9jKFxcc2NyVCkiXSxbMiwwLCJbXFxzY3JULCBcXFZdIl0sWzAsMiwiXFxDIl0sWzIsMiwiW1xcc2NyS15cXG9wLCBcXFZdIl0sWzAsMSwiUF5cXHNjclQiXSxbMCwyLCJVXlxcc2NyVCIsMl0sWzEsMywiW1xcdGF1LCAxXSJdLFsyLDMsIk5fayIsMl0sWzAsMywiIiwxLHsic3R5bGUiOnsibmFtZSI6ImNvcm5lciJ9fV1d
\[\begin{tikzcd}
	{\MOD_c(\scrT)} && {[\scrT, \V]} \\
	\\
	\C && {[\K^\op, \V]},
	\arrow[from=1-1, to=1-3]
	\arrow["{U^\scrT}"', from=1-1, to=3-1]
	\arrow["{[\tau, 1]}", from=1-3, to=3-3]
	\arrow["{N_k}"', from=3-1, to=3-3]
\end{tikzcd}\]
where $N_k : \C \to [\K^\op, \V]$ is the \emph{$k$-nerve} $\V$-functor defined by $N_kC = \C(k-, C)$ ($C \in \C$). A concrete $\scrT$-model is therefore an object $A$ of $\C$ equipped with a $\V$-functor $M : \scrT \to \V$ that extends the $k$-nerve $\C(k-, A) : \K^\op \to \V$ along $\tau : \K^\op \to \scrT$, i.e. $M \circ \tau = \C(k-, A)$. Since $\tau$ is identity-on-objects, $M$ is then given on objects by $MK = \C(K, A)$ $(K \in \ob\scrT = \ob\K)$. 

Our objective is now to employ diagrammatic $\J$-presentations to generalize this monad-pretheory adjunction from the locally presentable context of \cite{BourkeGarner} to the more general context of a small subcategory of arities $\K \hookrightarrow \C$ that is contained in some bounded and eleutheric subcategory of arities $\J \hookrightarrow \C$ \eqref{subcategoryaritiesassumptions}, where $\C$ and $\V$ need only satisfy the weaker hypotheses of the present paper \eqref{Vcategoryassumptions}. We shall then recover the monad-pretheory adjunction of \cite{BourkeGarner} as a special case, since in the locally presentable setting of \cite{BourkeGarner}, \textit{every} small subcategory of arities $\K \hookrightarrow \C$ is contained in a bounded and eleutheric subcategory of arities (\ref{lpres}). Towards these ends, we first prove the following result, now transporting the above notions of pretheory and concrete model to the general setting of the present paper, under just the assumptions of \ref{Vcategoryassumptions}: 

\begin{theo}
\label{BGpresentation}
Let $\C$ and $\V$ be as in \ref{Vcategoryassumptions}, and let $k : \K \hookrightarrow \C$ be a subcategory of arities that is contained in some bounded and eleutheric subcategory of arities $j : \J \hookrightarrow \C$. For any $\K$-pretheory $\tau : \K^\op \to \scrT$, there is a diagrammatic $\J$-presentation $\cP_\scrT$ with $\cP_\scrT\Alg \cong \MOD_c(\scrT)$ in $\V\CAT/\C$, so $\MOD_c(\scrT)$ is a strictly $\J$-monadic $\V$-category over $\C$. 
\end{theo}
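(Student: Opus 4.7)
The plan is to construct a diagrammatic $\J$-presentation $\cP_\scrT = (\cS, \sfE)$ such that the $\V$-category $\cP_\scrT\Alg$ coincides with $\MOD_c(\scrT)$ as a $\V$-category over $\C$, and then to invoke Theorem \ref{vmnd_pres_by_ff_jpres} for strict $\J$-monadicity. The construction works because $\ob\K \subseteq \ob\J$, so arities from $\K$ are legitimate arities for a $\J$-presentation.

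First, I would define the free-form $\J$-signature $\cS$ to have an operation symbol $\sigma_{K,K'}$ for each pair $(K,K') \in \ob\K \times \ob\K$, with arity $J_{\sigma_{K,K'}} := K \in \ob\J$ and parameter $C_{\sigma_{K,K'}} := \scrT(K,K') \otimes K' \in \ob\C$, where $\scrT(K,K') \otimes K'$ denotes the tensor of $K' \in \C$ by $\scrT(K,K') \in \V$ (this is small since $\K$ is). Unwinding \ref{ff_signature_algebra} and transposing via the universal property of tensors in $\C$ and the tensor-hom adjunction in $\V$, an $\cS$-algebra structure on an object $A$ of $\C$ is equivalent to a family of morphisms $M^A_{K,K'}: \scrT(K,K') \to \V(\C(K,A), \C(K',A))$ in $\V$ indexed by $(K,K') \in \ob\K^2$, with no further constraints. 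Together with the object assignment $K \mapsto \C(K,A)$ (which is forced since $\tau$ is identity-on-objects), this is exactly the data of a prospective $\V$-functor $M^A: \scrT \to \V$ with $M^A K = \C(K,A)$ on objects.

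Next, I would include in $\sfE$ three families of diagrammatic $\cS$-equations: (a) compatibility of $(M^A_{K,K'})$ with composition in $\scrT$ for each triple $(K, K', K'')$; (b) compatibility with identities in $\scrT$ for each $K$; and (c) the extension axiom $M^A_{K,K'} \circ \tau_{K,K'} = \C(k-, A)_{K,K'}$ as maps $\K(K', K) \to \V(\C(K, A), \C(K', A))$ for each pair $(K, K')$. Each such axiom is an equality between two morphisms in $\V$ with common codomain of the form $\V(\C(K,A), \C(K',A))$, and transposes, through $\V$-closedness and the universal property of tensors in $\C$, to an equality of morphisms $\C(K, A) \rightrightarrows \C(D, A)$ for a suitable $D \in \ob\C$ built by tensoring hom-objects in $\V$ with objects of $\K$; for instance, the composition axiom becomes a diagrammatic $\cS$-equation with arity $K$ and parameter $\scrT(K,K') \otimes \scrT(K',K'') \otimes K''$, and the extension axiom becomes one with arity $K$ and parameter $\K(K',K) \otimes K'$. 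Both sides of every such equation arise by pre- and post-composing fixed morphisms of $\V$ and $\C$ with the $\V$-natural $\cS$-operations $\sigma^{(-)}_{K,K'}$, which are $\V$-natural in $A \in \cS\Alg$ by \ref{signature_operation_natural}; hence $\V$-naturality in $A$ is automatic and each such parallel pair is a genuine diagrammatic $\cS$-equation.

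By construction, an $\cS$-algebra $A$ satisfies every equation in $\sfE$ precisely when the family $(M^A_{K,K'})$ assembles into a $\V$-functor $M^A: \scrT \to \V$ with $M^A \circ \tau = \C(k-,A)$, i.e., precisely when $(A, M^A)$ is a concrete $\scrT$-model; moreover $\cS$-homomorphisms commuting with all $\sigma^A_{K,K'}$ correspond exactly to morphisms in $\MOD_c(\scrT)$, yielding an isomorphism $\cP_\scrT\Alg \cong \MOD_c(\scrT)$ in $\V\CAT/\C$. Strict $\J$-monadicity of $\MOD_c(\scrT)$ then follows from Theorem \ref{vmnd_pres_by_ff_jpres}. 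The main obstacle is the bookkeeping involved in recasting the $\V$-functoriality and extension axioms into the required diagrammatic shape $\C(J, A) \rightrightarrows \C(D, A)$ with $J \in \ob\J$ and $D \in \ob\C$; once one systematically transposes through the relevant tensor-hom adjunctions and identifies both sides as built from the natural operations $\sigma^{(-)}_{K,K'}$, the verification reduces to routine symbolic manipulation.
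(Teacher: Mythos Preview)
Your proposal is correct and follows essentially the same approach as the paper: the same free-form $\J$-signature (operation symbols indexed by pairs in $\ob\K \times \ob\K$ with arity the first entry and parameter the tensor of the relevant hom-object of $\scrT$ with the second entry), the same three families of diagrammatic $\cS$-equations (composition, identity, and extension along $\tau$), and the same appeal to Theorem \ref{vmnd_pres_by_ff_jpres}. The only cosmetic difference is that the paper verifies the isomorphism $\cP_\scrT\Alg \cong \MOD_c(\scrT)$ via the representability statement \ref{universal_property_PAlg} rather than by directly matching objects and homomorphisms.
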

 
\begin{proof}
A concrete $\scrT$-model $A$ consists of an object $A$ of $\C$ equipped with a $\V$-functor $M^A : \scrT \to \V$ satisfying certain conditions, including the requirement that $M^AK = \C(K, A)$ for all $K \in \ob\scrT = \ob\K$. In particular, $M^A$ has structural morphisms $M_{\highm{KK'}}^A : \scrT(\highm{K, K'}) \to \V(\highm{\C(K, A), \C(K', A)})$ ($\highm{K, K'} \in \ob\K$), which are equivalently given by morphisms $M_{\highm{KK'}}^A : \highm{\C(K, A) \tensor \scrT(K, K') \tensor K' \to A}$, since $\C$ is tensored. We now define a free-form $\J$-signature $\cS$ consisting of an operation symbol $M_{\highm{KK'}}$ of arity $\highm{K} \in \ob\K \subseteq \ob\J$ and parameter $\highm{\scrT(K, K') \tensor K'} \in \ob\C$ for each pair $\highm{(K, K')} \in \ob\K \times \ob\K$. An $\cS$-algebra is thus an object $A$ of $\C$ equipped with a $\V$-graph morphism $M^A:\scrT \rightarrow \V$ given on objects by $M^AK = \C(K, A)$ ($K \in \ob\scrT = \ob\K$), where we write simply $\scrT$ and $\V$ for the $\V$-graphs underlying $\scrT$ and $\V$.

We now define a diagrammatic $\J$-presentation $\cP_\scrT = (\cS, \sfE)$ for which $\cP_\scrT$-algebras will be precisely the concrete $\scrT$-models. We need diagrammatic $\cS$-equations expressing preservation of composition and identities by a concrete $\scrT$-model. For an $\cS$-algebra $A$ to be a concrete $\scrT$-model, we need the following diagram to commute for each triple \highl{$K, K', K''$} of objects of $\K$:
\[\begin{tikzcd}
	{\highm{\C(K, A) \tensor \scrT(K, K') \tensor \scrT(K', K'') \tensor K''}} && {\highm{\C(K, A) \tensor \scrT(K, K'') \tensor K''}} \\
	&& A \\
	{\highm{\C\left(K', \C(K, A) \tensor \scrT(K, K') \tensor K'\right) \tensor \scrT(K', K'') \tensor K''}} && {\highm{\C(K', A) \tensor \scrT(K', K'') \tensor K''}},
	\arrow["{\mathsf{coev} \tensor 1}"', from=1-1, to=3-1]
	\arrow["{\highm{\C\left(1, M_{KK'}^A\right) \tensor 1}}"', from=3-1, to=3-3]
	\arrow["{\highm{1 \tensor c_{KK'K''} \tensor 1}}", from=1-1, to=1-3]
	\arrow["{\highm{M_{KK''}^A}}", from=1-3, to=2-3]
	\arrow["{\highm{M_{KK''}^A}}"', from=3-3, to=2-3]
\end{tikzcd}\]
where \highl{$c_{KK'K''}$} is a composition morphism of $\scrT$ and $\mathsf{coev}$ is the unit morphism at $\C(\highm{K}, A) \tensor \scrT(\highm{K}, \highm{K'}) \in \ob\V$ of the tensor-hom adjunction $\highm{(-) \tensor K' \dashv \C(K', -)} : \C \to \V$. This diagram describes a diagrammatic $\cS$-equation $\highm{\C\left(K, A\right) \tensor \scrT(K, K') \tensor \scrT(K', K'') \tensor K''} \rightrightarrows A$ ($A \in \cS\Alg$) with arity \highl{$K$} and parameter \highl{$\scrT(K, K') \tensor \scrT(K', K'') \tensor K''$}.  For each object $K$ of $\K$, we also require the commutativity of the following diagram, which describes a diagrammatic $\cS$-equation $\C\left(K, A\right) \tensor K \rightrightarrows A$ ($A \in \cS\Alg$):
% https://q.uiver.app/?q=WzAsNCxbMCwwLCJcXEMoSywgQSkgXFx0ZW5zb3IgSyJdLFsxLDAsIlxcQyhLLCBBKSBcXHRlbnNvciBJIFxcdGVuc29yIEsiXSxbMywwLCJcXEMoSywgQSkgXFx0ZW5zb3IgXFxzY3JUKEssIEspIFxcdGVuc29yIEsiXSxbMywxLCJBIl0sWzAsMSwiXFxzaW0iXSxbMSwyLCIxIFxcdGVuc29yIHVfSyBcXHRlbnNvciAxIl0sWzIsMywiTV97S0t9XkEiXSxbMCwzLCJcXG1hdGhzZntldn0iLDJdXQ==
\[\begin{tikzcd}
	{\C(K, A) \tensor K} & {\C(K, A) \tensor I \tensor K} && {\C(K, A) \tensor \scrT(K, K) \tensor K} \\
	&&& A,
	\arrow["\sim", from=1-1, to=1-2]
	\arrow["{1 \tensor u_K \tensor 1}", from=1-2, to=1-4]
	\arrow["{M_{KK}^A}", from=1-4, to=2-4]
	\arrow["{\mathsf{ev}}"', from=1-1, to=2-4]
\end{tikzcd}\]
where $u_K : I \to \scrT(K, K)$ is an identity morphism of $\scrT$ and $\mathsf{ev}$ is the counit morphism at $A \in \ob\C$ of the tensor-hom adjunction $(-) \tensor K \dashv \C(K, -) : \C \to \V$.

Lastly, we need diagrammatic $\cS$-equations expressing that, for a concrete $\scrT$-model $A$, the $\V$-functor $M^A : \scrT \to \V$ extends $\C(k-, A) : \K^\op \to \V$ along $\tau : \K^\op \to \scrT$. So for an $\cS$-algebra $A$ to be a concrete $\scrT$-model, we need the following diagram to commute for all objects \highl{$K, K'$} of $\K$:
% https://q.uiver.app/?q=WzAsNSxbMCwwLCJcXEMoSiwgQSkgXFx0ZW5zb3IgXFxzY3JLKEssIEopIFxcdGVuc29yIEsiXSxbMywwLCJcXEMoSiwgQSkgXFx0ZW5zb3IgXFxzY3JUKEosIEspIFxcdGVuc29yIEsiXSxbMywxLCJBIl0sWzAsMSwiXFxDKEssIEopIFxcdGVuc29yIFxcQyhKLCBBKSBcXHRlbnNvciBLIl0sWzIsMSwiXFxDKEssIEEpIFxcdGVuc29yIEsiXSxbMCwxLCIxIFxcdGVuc29yIFxcdGF1X3tKS30gXFx0ZW5zb3IgMSJdLFsxLDIsIk1fe0pLfV5BIl0sWzAsMywiXFx3ciIsMl0sWzMsNCwiY197S0pBfSBcXHRlbnNvciAxIiwyXSxbNCwyLCJcXG1hdGhzZntldn0iLDJdXQ==
\[\begin{tikzcd}
	{\highm{\C(K, A) \tensor \K(K', K) \tensor K'}} &&& {\highm{\C(K, A) \tensor \scrT(K, K') \tensor K'}} \\
	{\highm{\C(K', K) \tensor \C(K, A) \tensor K'}} && {\highm{\C(K', A) \tensor K'}} & A,
	\arrow["{1 \tensor \tau_{\highm{KK'}} \tensor 1}", from=1-1, to=1-4]
	\arrow["{M_{\highm{KK'}}^A}", from=1-4, to=2-4]
	\arrow["\wr"', from=1-1, to=2-1]
	\arrow["{c_{\highm{K'K}A} \tensor 1}"', from=2-1, to=2-3]
	\arrow["{\mathsf{ev}}"', from=2-3, to=2-4]
\end{tikzcd}\]
where $c_{\highm{K'K}A}$ is a composition morphism of $\C$. This diagram describes a diagrammatic $\cS$-equation $\highm{\C\left(K, A\right) \tensor \K(K', K) \tensor K'} \rightrightarrows A$ ($A \in \cS\Alg$). We thus obtain a diagrammatic $\J$-presentation $\cP_\scrT$ for which $\cP_\scrT$-algebras are in bijective correspondence with concrete $\scrT$-models, and it is straightforward to verify (using \ref{universal_property_PAlg} and the definition of $\mathsf{Mod}_c(\scrT)$ as a pullback in $\V\CAT$) that this bijection on objects extends to an isomorphism $\cP_\scrT\Alg \cong \mathsf{Mod}_c(\scrT)$ in $\V\CAT/\C$.  Hence $\MOD_c(\scrT)$ is strictly $\J$-monadic over $\C$ by \ref{vmnd_pres_by_ff_jpres}.
\end{proof}

\noindent In the current general context, where $\C$ and $\V$ satisfy the background hypotheses of \ref{Vcategoryassumptions} and $\K \hookrightarrow \C$ is a small subcategory of arities, we can again define a functor $\Phi : \Mnd(\C) \to \Preth_\K(\C)$ exactly as described at the beginning of this section.  Moreover, under the hypotheses of \ref{BGpresentation}, we can prove the following generalization of \cite[Theorem 6]{BourkeGarner}:

\begin{theo}
\label{adjunctiontheo}
Let $\C$ and $\V$ be as in \ref{Vcategoryassumptions}, and let $k : \K \hookrightarrow \C$ be a subcategory of arities that is contained in some bounded and eleutheric subcategory of arities $j : \J \hookrightarrow \C$. Then $\Phi : \Mnd(\C) \to \Preth_\K(\C)$ has a left adjoint $\Psi : \Preth_\K(\C) \to \Mnd(\C)$ that sends each $\K$-pretheory $\scrT$ to a $\V$-monad $\Psi(\scrT)$ such that $\Psi(\scrT)\Alg \cong \MOD_c(\scrT)$ in $\V\CAT/\C$.  Furthermore, $\Psi(\scrT)$ is a $\J$-ary $\V$-monad for each $\K$-pretheory $\scrT$.
\end{theo}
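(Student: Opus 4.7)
The plan is to define $\Psi(\scrT) := \T_{\cP_\scrT}$, the $\J$-ary $\V$-monad on $\C$ presented by the diagrammatic $\J$-presentation $\cP_\scrT$ constructed in \ref{BGpresentation}. By \ref{BGpresentation} and \ref{vmnd_pres_by_ff_jpres}, $\Psi(\scrT)$ is a $\J$-ary $\V$-monad and $\Psi(\scrT)\Alg \cong \cP_\scrT\Alg \cong \MOD_c(\scrT)$ in $\V\CAT/\C$, so the displayed isomorphism and the final assertion of the theorem hold by construction. For the action of $\Psi$ on morphisms, note that $\MOD_c(-)$ is contravariantly functorial on pretheories under $\K^\op$, as a pretheory morphism $\gamma : \scrT \to \scrT'$ induces a $\V$-functor $\MOD_c(\scrT') \to \MOD_c(\scrT)$ over $\C$ by restriction along $\gamma$; transposing along the fully faithful semantics functor $\Mnd(\C)^\op \hookrightarrow \V\CAT/\C$ then produces a $\V$-monad morphism $\Psi(\scrT) \to \Psi(\scrT')$.

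To establish $\Psi \dashv \Phi$, I would exhibit a bijection
\[ \Mnd(\C)(\Psi(\scrT),\mathbb{S}) \;\cong\; \Preth_\K(\C)(\scrT,\Phi(\mathbb{S})) \]
natural in $\scrT$ and $\mathbb{S}$, for an arbitrary $\V$-monad $\mathbb{S}$ on $\C$. The fully faithful semantics functor identifies the left-hand side with the set of $\V$-functors $\mathbb{S}\Alg \to \Psi(\scrT)\Alg \cong \MOD_c(\scrT)$ over $\C$, and the defining pullback of $\MOD_c(\scrT)$ further identifies this with the set of $\V$-functors $F : \mathbb{S}\Alg \to [\scrT,\V]$ satisfying $[\tau,1] \circ F = N_k \circ U^\mathbb{S}$. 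Since $\tau$ is identity-on-objects, such an $F$ is determined on objects by $FA(K) = \C(K,U^\mathbb{S}A)$ for each $A \in \mathbb{S}\Alg$ and $K \in \ob\scrT = \ob\K$, and the remaining data consists of morphisms $FA_{JK} : \scrT(J,K) \to \V(\C(J,U^\mathbb{S}A),\C(K,U^\mathbb{S}A))$ that are $\V$-natural in $A$, obey the $\V$-functor axioms for each $FA$, and agree with $\C(k-,U^\mathbb{S}A)$ upon precomposition with $\tau$.

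By the tensor-hom adjunctions in $\V$ and $\C$, each $\V$-natural family $FA_{JK}$ transposes to a parametrized operation $\C(J,U^\mathbb{S}-) \to \C(\scrT(J,K) \otimes K, U^\mathbb{S}-)$ on $U^\mathbb{S}$, and by \ref{interp_iso} such parametrized operations correspond bijectively to morphisms $\scrT(J,K) \otimes K \to SJ$ in $\C$, equivalently to morphisms $\scrT(J,K) \to \C(K,SJ) = \K_\mathbb{S}^\op(J,K)$ in $\V$. Collecting these over $J,K \in \ob\K$ yields an identity-on-objects $\V$-graph morphism $\scrT \to \K_\mathbb{S}^\op$, and the remaining task is to verify that the $\V$-functor axioms for each $FA$, together with the compatibility $FA \circ \tau = \C(k-,U^\mathbb{S}A)$, translate precisely into the $\V$-functor axioms for this graph morphism and into the condition that it factors under $\K^\op$, so that it becomes the desired pretheory morphism $\scrT \to \Phi(\mathbb{S})$. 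The main obstacle is this bookkeeping step: one must check that composition in $\scrT$ is mapped to Kleisli composition in $\K_\mathbb{S}^\op$, which relies on the fact that composition of parametrized operations in $\mathsf{Str}(U^\mathbb{S})$ corresponds, under the isomorphism $\mathsf{Str}(U^\mathbb{S}) \cong \C_\mathbb{S}^\op$ of \ref{interp_iso}, to Kleisli composition for $\mathbb{S}$. Naturality of the resulting bijection in both $\scrT$ and $\mathbb{S}$ then follows directly from its construction.
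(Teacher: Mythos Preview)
Your argument is correct in outline, but it takes a genuinely different route from the paper's.  The paper does not verify the hom-set bijection directly.  Instead, it invokes \cite[Theorem 2]{BourkeGarner}, which (as Bourke and Garner themselves emphasize) does \emph{not} require local presentability and asserts that $\Phi$ has a left adjoint \emph{provided} each forgetful $\V$-functor $U^\scrT : \MOD_c(\scrT) \to \C$ has a left adjoint, in which case $\Psi(\scrT)$ is the $\V$-monad induced by that adjunction.  The only new input the paper must supply is that $U^\scrT$ does have a left adjoint, and this is immediate from Theorem \ref{BGpresentation}, which shows $U^\scrT$ is strictly $\J$-monadic; the same result gives that $\Psi(\scrT)$ is $\J$-ary.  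Thus the paper's proof is two lines.

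Your direct approach essentially re-derives (the relevant part of) \cite[Theorem 2]{BourkeGarner} from scratch, via the structure--semantics correspondence $\mathsf{Str}(U^{\mathbb S}) \cong \C_{\mathbb S}^\op$ of \ref{interp_iso}.  This is a legitimate and more self-contained route, and your identification of the ``bookkeeping'' step---that the $\V$-functor axioms for each $FA$ together with $\V$-naturality in $A$ translate, under \ref{interp_iso}, into preservation of Kleisli composition by the induced $\V$-graph morphism $\scrT \to \K_{\mathbb S}^\op$---is correct and is where the real content lies.  The trade-off is clear: the paper's proof is much shorter but depends on an external reference, while yours is longer but internal to the paper's framework (and makes more transparent \emph{why} the adjunction holds, in terms of the material of \S\ref{sec:param_ops_interp}).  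Either way, both proofs rest on \ref{BGpresentation} for the crucial fact that $\MOD_c(\scrT)$ is $\J$-ary monadic.
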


\begin{proof}
By \cite[Theorem 2]{BourkeGarner}, which (as Bourke and Garner emphasize) does \emph{not} require the local presentability of $\C$ or $\V$, and (hence) still holds under the more general hypotheses of \ref{Vcategoryassumptions}, \textit{if} the forgetful $\V$-functor $U^\scrT : \MOD_c(\scrT) \to \C$ has a left adjoint for each $\K$-pretheory $\scrT$, \textit{then} $\Phi$ has a left adjoint $\Psi$ that sends $\scrT$ to the $\V$-monad $\Psi(\scrT)$ induced by this adjunction. But, under the present hypotheses, $U^\scrT$ is strictly $\J$-monadic by Theorem \ref{BGpresentation}.
\end{proof}

\noindent Theorem \ref{adjunctiontheo} significantly generalizes \cite[Theorem 6]{BourkeGarner} by removing the assumptions that $\C$ and $\V$ are locally presentable.  Thus, Theorem \ref{adjunctiontheo} is applicable to all of the examples of \ref{runningexamples}, in most of which $\V$ need not be locally presentable.  In view of the discussion preceding \ref{BGpresentation}, the hypotheses of Theorem \ref{adjunctiontheo} are automatically satisfied in the locally presentable context by taking $\J = \C_\beta$ for a suitable regular cardinal $\beta$, so that as a special case of \ref{adjunctiontheo} we recover \cite[Theorem 6]{BourkeGarner}: \textit{If $\K \hookrightarrow \C$ is a small subcategory of arities in a locally presentable $\V$-category $\C$ over a locally presentable closed category $\V$, then $\Phi : \Mnd(\C) \to \Preth_\K(\C)$ has a left adjoint $\Psi$ such that $\Psi(\scrT)\Alg \cong \MOD_c(\scrT)$ in $\V\CAT/\C$ for each $\K$-pretheory $\scrT$.}

\begin{defn}
\label{Knervous}
{
Let $\C$ and $\V$ be as in \ref{Vcategoryassumptions}, and let $\K \hookrightarrow \C$ be a small subcategory of arities. A $\V$-monad $\T$ on $\C$ is \textbf{$\K$-nervous} if $\T \cong \Psi(\scrT)$ for some $\K$-pretheory $\scrT$. \qed
}
\end{defn}

\noindent This is not the original definition of $\K$-nervous $\V$-monad given in \cite[Definition 17]{BourkeGarner}, but it is equivalent to their definition (in their locally presentable context) by \cite[Corollary 21]{BourkeGarner}.  

\begin{cor}
\label{KnervousJary}
Let $\C$ and $\V$ be as in \ref{Vcategoryassumptions}, and let $\K \hookrightarrow \C$ be a subcategory of arities that is contained in some bounded and eleutheric subcategory of arities $\J \hookrightarrow \C$. Then every $\K$-nervous $\V$-monad $\T$ is $\J$-ary.  
\end{cor}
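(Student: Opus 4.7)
The proof will be essentially a direct appeal to the theorem just established. Here is the plan.

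First, I would unpack the definition of $\K$-nervous from \ref{Knervous}: by hypothesis, the $\V$-monad $\T$ is isomorphic (in $\Mnd(\C)$) to $\Psi(\scrT)$ for some $\K$-pretheory $\scrT : \K^\op \to \scrT$. The left adjoint $\Psi : \Preth_\K(\C) \to \Mnd(\C)$ is the one constructed in Theorem \ref{adjunctiontheo}.

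Next, I would invoke the final clause of Theorem \ref{adjunctiontheo}, which asserts that under the present hypotheses on $\K \hookrightarrow \J \hookrightarrow \C$, the $\V$-monad $\Psi(\scrT)$ is $\J$-ary for every $\K$-pretheory $\scrT$. Hence $\Psi(\scrT)$ is $\J$-ary.

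Finally, I would observe that $\J$-ariness of a $\V$-monad is a property of its underlying $\V$-endofunctor, namely the preservation of left Kan extensions along $j : \J \hookrightarrow \C$ (equivalently, $\Phi_{\underJ}$-cocontinuity; see \ref{Jary}); this property is manifestly invariant under $\V$-natural isomorphism, and a fortiori under isomorphism of $\V$-monads. Since $\T \cong \Psi(\scrT)$ and $\Psi(\scrT)$ is $\J$-ary, $\T$ is $\J$-ary as well. No real obstacle arises: the work is entirely contained in Theorem \ref{adjunctiontheo}, and this corollary is just the observation that $\J$-ariness descends along the isomorphism witnessing $\K$-nervousness.
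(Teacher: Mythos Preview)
Your proposal is correct and follows essentially the same argument as the paper: unpack the definition of $\K$-nervous to obtain $\T \cong \Psi(\scrT)$ for some $\K$-pretheory $\scrT$, then invoke the final clause of Theorem \ref{adjunctiontheo} to conclude that $\Psi(\scrT)$, and hence $\T$, is $\J$-ary. The paper's proof is just a one-line version of what you wrote, omitting the explicit remark about invariance of $\J$-ariness under isomorphism.
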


\begin{proof}
There is some $\K$-pretheory $\scrT$ with $\T \cong \Psi(\scrT)$, and $\Psi(\scrT)$ is $\J$-ary by \ref{adjunctiontheo}.
\end{proof}

\begin{cor}
\label{Knervousaccessible}
Let $\K \hookrightarrow \C$ be a small subcategory of arities in a locally $\alpha$-presentable $\V$-category $\C$ over a locally $\alpha$-presentable closed category $\V$. Then there is a regular cardinal $\beta \geq \alpha$ such that every $\K$-nervous $\V$-monad on $\C$ is a $\beta$-ary $\V$-monad (\ref{runningexamples}).
\end{cor}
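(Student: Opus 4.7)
The plan is to deduce this directly by combining Example \ref{lpres} with Corollary \ref{KnervousJary}, which together essentially already encode the statement; the only real content is matching the appropriate notions of arity.

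First, I would invoke Example \ref{lpres}: since $\C$ is locally $\alpha$-presentable over the locally $\alpha$-presentable closed category $\V$ and $\K \hookrightarrow \C$ is a small full sub-$\V$-category, there exists a regular cardinal $\beta \geq \alpha$ such that every object of $\K$ is enriched $\beta$-presentable, and hence $\K$ is contained in the bounded and eleutheric subcategory of arities $\J := \C_\beta \hookrightarrow \C$ (as in \ref{runningexamples}(1)), and the data $(\V,\C,\J)$ satisfy the blanket assumptions of \ref{Vcategoryassumptions} and \ref{subcategoryaritiesassumptions}.

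Next, I would apply Corollary \ref{KnervousJary} to this choice of $\J = \C_\beta$: every $\K$-nervous $\V$-monad $\T$ on $\C$ is $\J$-ary, i.e., $\C_\beta$-ary. Finally, as recalled in \ref{runningexamples}(1), $\C_\beta$-ary $\V$-monads are precisely the $\beta$-ary (equivalently, $\beta$-accessible) $\V$-monads on $\C$, which yields the claim.

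There is no substantive obstacle here, as the result is a direct specialization of \ref{KnervousJary} to the locally presentable setting by way of the choice of $\J = \C_\beta$; the only thing to verify is that one may indeed enlarge $\alpha$ to some $\beta$ uniformly in $\K$, which is precisely the content of \ref{lpres} and relies on the standard fact \cite[7.4]{Kellystr} that every small full sub-$\V$-category of a locally $\alpha$-presentable $\V$-category consists of $\beta$-presentable objects for some regular cardinal $\beta \geq \alpha$.
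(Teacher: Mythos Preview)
Your proposal is correct and follows essentially the same approach as the paper: the paper's proof is the single sentence ``In view of \ref{lpres}, this follows from \ref{KnervousJary},'' and your argument is precisely an unpacking of that sentence, including the identification of $\C_\beta$-ary with $\beta$-ary via \ref{runningexamples}(1).
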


\begin{proof}
In view of \ref{lpres}, this follows from \ref{KnervousJary}. 
\end{proof}

\section{A Birkhoffian Galois connection for \texorpdfstring{$\J$}{J}-ary varieties enriched in $\V$}
\label{Galoisconnection}

Throughout this final section, we fix a free-form $\J$-signature $\cS$.\footnote{Although we have chosen to focus on a free-form $\J$-signature $\cS$ and the $\J$-ary $\V$-monad $\T_\cS$ generated by $\cS$ \eqref{salg_jmonadic}, the results of this section also hold when replacing $\T_\cS$ by a general $\J$-ary $\V$-monad $\T$.}  If $\cP$ is a free-form (or, equivalently, diagrammatic) $\J$-presentation of the form $\cP = (\cS,\sfE)$, then we say that $\cP$ is a \textbf{free-form $\J$-presentation over $\cS$}, and we call $\cP\Alg$ a \textbf{$\J$-ary variety over $\cS$} (cf.~\ref{Jaryvariety}).  Hence, a $\J$-ary variety over $\cS$ is equivalently given by a (possibly large) \textit{set} of $\cS$-algebras of the form $\ob(\cP\Alg)$ for a free-form $\J$-presentation $\cP$ over $\cS$, and we refer to such sets also as $\J$-ary varieties over $\cS$.

For the remainder of the section, we also fix a generating set $\G$ for $\C_0$, i.e. a set of objects $\G \subseteq \ob\C$ such that the functors $\C_0(G, -) : \C_0 \to \Set$ ($G \in \G$) are jointly faithful; we do not assume that $\G$ is small (e.g. we allow $\G = \ob\C$). Finally, we assume for this section that (in addition to the assumptions of \ref{Vcategoryassumptions}) the category $\C_0$ is complete, so that the $\V$-category $\C$ is complete (being tensored and cotensored).

In this section, we establish and study a Galois connection between sets of $\cS$-algebras and sets of \emph{$\G$-parametrized $\cS$-equations} \eqref{powerclasses} whose fixed points on one side are precisely the $\J$-ary varieties over $\cS$, thus generalizing the classical Galois connection of Birkhoff \cite{Birkhoff:Lattice}. We shall conclude the section by investigating conditions under which a $\J$-ary variety over $\cS$ is generated by a given $\cS$-algebra or a given set of $\cS$-algebras.  

\begin{defn}
\label{powerclasses}
{
A \textbf{$\G$-parametrized $\cS$-equation} is a parametrized $\cS$-equation $t \doteq u : G \rightrightarrows T_\cS J$ \eqref{param_Seqn} whose parameter $G$ is an object of $\G$. We then have the (possibly large) set $\Eqn_\G(\cS)$ of $\G$-parametrized $\cS$-equations, which may also be described as the disjoint union \[ \Eqn_\G(\cS) \;\;= \coprod_{J \in \ob\J,\;G \in \G} \C_0\left(G, T_\cS J\right)^2. \] We let $\Pow\left(\Eqn_\G(\cS)\right)$ be the (possibly large) set of all subsets of $\Eqn_\G(\cS)$ and $\Pow(\cS\Alg)$ the (possibly large) set of all subsets of $\ob\left(\cS\Alg\right)$. \qed 
} 
\end{defn}

\begin{para}
\label{class_vs_Jpres}
Note that if $\G = \ob\C$, then an $\ob\C$-parametrized $\cS$-equation is just a parametrized $\cS$-equation \eqref{param_Seqn}.  We write
$$\Eqn(\cS) := \Eqn_{\ob\C}(\cS)$$
to denote the set of \textit{all} parametrized $\cS$-equations.  Note also that if $\calE \subseteq \Eqn(\cS)$ is small, then $\calE$ may be regarded as a system of parametrized $\J$-ary equations over $\T_\cS$ \eqref{defn_ff_sys_jary_eqns}, so that $\cP = (\cS, \calE)$ is a free-form $\J$-presentation \eqref{ff_Jpres}. \qed
\end{para}

\begin{defn}
\label{varietytheory}
We now introduce some notation related to the satisfaction relation $\models$ for $\cS$-algebras and parametrized $\cS$-equations (\ref{interp_in_salg}), recalling that $\models$ is a binary relation from $\ob\left(\cS\Alg\right)$ to $\Eqn(\cS)$.  If $\calA \in \Pow(\cS\Alg)$ and $\calE \in \Pow(\Eqn(\cS))$, then we write $\calA \models \calE$ to mean that $A \models t \doteq u$ \eqref{interp_in_salg} for all $A \in \calA$ and all $(t, u) \in \calE$. If $\calA = \{A\}$, then we write $A \models \calE$ rather than $\{A\} \models \calE$, and if $\calE = \{(t, u)\}$, then we write $\calA \models t \doteq u$ rather than $\calA \models \{(t, u)\}$. Given a (possibly large) set $\calE$ of parametrized $\cS$-equations, we write
\begin{equation}\label{sat_upper_star} \Sat^*(\calE) \;\;:=\;\; \left\{ A \in \ob(\cS\Alg) \mid A \models \calE\right\}.\end{equation}
Given a (possibly large) set of $\cS$-algebras $\calA$, we write
\[ \Sat_*(\mathcal{A}) \;\;:=\;\; \left\{ (t, u) \in \Eqn(\cS) \mid \mathcal{A} \models t \doteq u\right\},\]
\begin{equation}\label{eq:models_lowerstarg} \Sat_*^\G(\mathcal{A}) \;\;:=\;\; \left\{ (t, u) \in \Eqn_\G(\cS) \mid \mathcal{A} \models t \doteq u\right\}.\end{equation}
\qed
\end{defn}

\begin{rmk}\label{rmk:basic_var_charn}
In view of \ref{class_vs_Jpres}, a set of $\cS$-algebras $\calA$ is a $\J$-ary variety over $\cS$ if and only if $\calA = \Sat^*(\calF)$ for some \textit{small} set $\calF$ of parametrized $\cS$-equations. \qed
\end{rmk}

\noindent As far as the satisfaction relation $\models$ is concerned, sets of parametrized $\cS$-equations can be represented faithfully as sets of \textit{$\G$-parametrized $\cS$-equations}, by way of the following construction:

\begin{defn}
\label{E_G}
Given a parametrized $\cS$-equation $t \doteq u:C \rightrightarrows T_\cS J$, we write $[t \doteq u]_\G$ to denote the set of all $\G$-parametrized $\cS$-equations of the form $t \circ x \doteq u \circ x:G \rightrightarrows T_\cS J$ where $G$ is an object in $\G$ and $x : G \to C$ is a morphism in $\C$.  Given instead a set $\calE$ of parametrized $\cS$-equations, we define $[\calE]_\G$ to be the union of the sets $[t \doteq u]_\G$ associated to the parametrized $\cS$-equations $t \doteq u$ in $\calE$. \qed
\end{defn}

\begin{prop}
\label{factor_through_satisf}
Let $\calA$ be a set of $\cS$-algebras, and let $\calE$ be a set of parametrized $\cS$-equations. Then $\calA \models \calE$ if and only if $\calA \models [\calE]_\G$.
\end{prop}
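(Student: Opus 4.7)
The plan is to reduce the claim to the case of a single $\cS$-algebra $A$ and a single parametrized $\cS$-equation $t \doteq u : C \rightrightarrows T_\cS J$, where the claim becomes: $A \models t \doteq u$ if and only if $A \models t \circ x \doteq u \circ x$ for every morphism $x : G \to C$ in $\C$ with $G \in \G$. Once this local equivalence is established, the proposition follows immediately by quantifying over $A \in \calA$ and $(t,u) \in \calE$ and unwinding Definition \ref{E_G} of $[\calE]_\G$.

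To establish the local equivalence, I will invoke the final observation of \ref{interp_morph_vs_interp_of_t}: for any jointly epimorphic family $(f_\lambda : C_\lambda \to C)_{\lambda \in \Lambda}$ in $\C$, satisfaction of $t \doteq u$ in $A$ is equivalent to the joint satisfaction of $t \circ f_\lambda \doteq u \circ f_\lambda$ for all $\lambda \in \Lambda$. The key remaining point is therefore that the family of all morphisms $x : G \to C$ with $G \in \G$ is jointly epimorphic in $\C$.

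This jointly-epimorphic property is exactly the translation of our standing hypothesis that $\G$ is a generating set for $\C_0$: by definition the functors $\C_0(G,-) : \C_0 \to \Set$ ($G \in \G$) are jointly faithful, which means that whenever $f, g : C \rightrightarrows D$ are distinct morphisms in $\C_0$ there exist $G \in \G$ and $x : G \to C$ with $f \circ x \neq g \circ x$; equivalently, the family $\{x : G \to C \mid G \in \G\}$ is jointly epimorphic in $\C_0$, hence in $\C$.

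There is no real obstacle here, since the heavy lifting has already been done in \ref{interp_morph_vs_interp_of_t}. The only point worth checking carefully is the direction of implication for sets: if $\calA \models \calE$ then for each $(t,u) \in \calE$ and each $A \in \calA$ we have $A \models t \doteq u$, and hence $A \models [t \doteq u]_\G$ by the local equivalence, so $\calA \models [\calE]_\G$; conversely, if $\calA \models [\calE]_\G$, then for each $(t,u) \in \calE$ and each $A \in \calA$ we have $A \models [t \doteq u]_\G$ (since $[t \doteq u]_\G \subseteq [\calE]_\G$), and so by the local equivalence $A \models t \doteq u$, giving $\calA \models \calE$.
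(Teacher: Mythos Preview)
Your proof is correct and follows essentially the same approach as the paper: the paper's own proof simply says ``Since $\G$ is a generating set for $\C_0$, this follows readily from \ref{interp_morph_vs_interp_of_t},'' and you have spelled out exactly this argument, reducing to a single algebra and equation, observing that the family of all $x:G \to C$ with $G \in \G$ is jointly epimorphic because $\G$ is a generator, and then invoking the final clause of \ref{interp_morph_vs_interp_of_t}.
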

\begin{proof}
Since $\G$ is a generating set for $\C_0$, this follows readily from \ref{interp_morph_vs_interp_of_t}.
\end{proof}

\begin{cor}
\label{variety_equality}
Let $\calE$ be a set of parametrized $\cS$-equations. Then $\Sat^*(\calE) = \Sat^*\bigl([\calE]_\G\bigr)$. \qed
\end{cor}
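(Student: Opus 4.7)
The plan is to deduce this corollary immediately from Proposition \ref{factor_through_satisf} by specializing to singleton sets of $\cS$-algebras. Indeed, by the definition of $\Sat^*$ in \eqref{sat_upper_star}, an $\cS$-algebra $A$ belongs to $\Sat^*(\calE)$ if and only if $\{A\} \models \calE$, so the result reduces to the claim that for every single $\cS$-algebra $A$ we have $A \models \calE$ iff $A \models [\calE]_\G$.

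The first step is therefore simply to unfold the definition of $\Sat^*$ on both sides of the desired equality. This gives the chain $A \in \Sat^*(\calE) \iff \{A\} \models \calE \iff \{A\} \models [\calE]_\G \iff A \in \Sat^*([\calE]_\G)$, where the middle equivalence is the content of Proposition \ref{factor_through_satisf} applied to the singleton set of algebras $\calA = \{A\}$.

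There is no real obstacle here since all the substantive work—namely that joint epimorphicity of the family of morphisms $(G \to C)_{G \in \G,\,x \in \C_0(G,C)}$ induced by the generator $\G$ allows satisfaction of an equation $t \doteq u : C \rightrightarrows T_\cS J$ to be tested on its $\G$-precompositions—has already been carried out in \ref{factor_through_satisf} via \ref{interp_morph_vs_interp_of_t}. Thus the corollary is a formal consequence, and the proof amounts to the two-line argument above.
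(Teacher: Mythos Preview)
Your proof is correct and matches the paper's approach exactly: the paper marks this corollary with a bare \qed, treating it as an immediate consequence of Proposition \ref{factor_through_satisf}, which is precisely what you do by specializing to singletons $\calA = \{A\}$.
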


\begin{rmk}\label{recover_satstar_from_satstarg}
Given a set $\calA$ of $\cS$-algebras, each of the sets $\Sat_*(\calA)$ and $\Sat_*^\G(\calA)$ can be expressed in terms of the other, as follows: firstly, $\Sat_*^\G(\calA)$ is clearly the intersection of $\Sat_*(\calA)$ with $\Eqn_\G(\cS)$, and secondly, it follows from \ref{factor_through_satisf} that the $\Sat_*(\calA)$ is the set of all parametrized $\cS$-equations $t \doteq u$ such that $\calA \models [t \doteq u]_\G$, i.e. such that 
$[t \doteq u]_\G \subseteq \Sat_*^\G(\calA)$. \qed
\end{rmk}

\noindent Regarding $\Pow(\Eqn_\G(\cS))$ and $\Pow(\cS\Alg)$ as (possibly large) partially ordered sets under inclusion, the following result now follows immediately from the definitions and is an instance of the well-known fact that any relation between sets induces a Galois connection between their power sets:

\begin{prop}
\label{Galoisconnectionprop}
There is a Galois connection
% https://q.uiver.app/?q=WzAsMixbMCwwLCJcXFBvdyhcXEVxbl9cXEcoXFxTaWdtYSkpIl0sWzMsMCwiXFxQb3coXFxTaWdtYVxcQWxnKV5cXG9wIl0sWzAsMSwiXFxtYXRoYmJ7Vn0iLDAseyJjdXJ2ZSI6LTJ9XSxbMSwwLCJcXFRoZXRhIiwwLHsiY3VydmUiOi0yfV0sWzMsMiwiIiwwLHsibGV2ZWwiOjEsInN0eWxlIjp7Im5hbWUiOiJhZGp1bmN0aW9uIn19XV0=
\[\begin{tikzcd}
	{\Pow(\Eqn_\G(\cS))} &&& {\Pow(\cS\Alg)^\op}
	\arrow[""{name=0, anchor=center, inner sep=0}, "\Sat^*", curve={height=-12pt}, from=1-1, to=1-4]
	\arrow[""{name=1, anchor=center, inner sep=0}, "\Sat_*^\G", curve={height=-12pt}, from=1-4, to=1-1]
	\arrow["\vdash"{anchor=center, rotate=90}, draw=none, from=1, to=0]
\end{tikzcd}\]
in which $\Sat^*$ and $\Sat_*^\G$ are given by \eqref{sat_upper_star} and \eqref{eq:models_lowerstarg}, respectively.  That is, $\Sat^*$ and $\Sat_*^\G$ are order-reversing maps, and for all $\calA \in \Pow(\cS\Alg)$ and $\calE \in \Pow(\Eqn_\G(\cS))$ we have \[ \calA \subseteq \Sat^*(\calE) \  \Longleftrightarrow \ \calE \subseteq \Sat_*^\G(\calA). \tag*{\qed} \]
\end{prop}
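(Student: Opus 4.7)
The plan is to observe that this is an instance of the completely general construction of a Galois connection from a binary relation, applied to the satisfaction relation $\models$ restricted to $\ob(\cS\Alg) \times \Eqn_\G(\cS)$. Accordingly, the proof should be essentially a routine unpacking of definitions, but care should be taken to (i) verify the biconditional cleanly and (ii) note that order-reversal of the two maps follows formally.

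First I would unwind the definitions on both sides. By Definition \ref{varietytheory}, the condition $\calA \subseteq \Sat^*(\calE)$ asserts that every $A \in \calA$ lies in $\Sat^*(\calE)$, i.e.\ that $A \models t \doteq u$ for every $A \in \calA$ and every $(t,u) \in \calE$; in the shorthand of \ref{varietytheory}, this is precisely $\calA \models \calE$. Dually, $\calE \subseteq \Sat_*^\G(\calA)$ asserts that every $(t,u) \in \calE$ lies in $\Sat_*^\G(\calA)$, i.e.\ that $\calA \models t \doteq u$ for every $(t,u) \in \calE$, which is again $\calA \models \calE$. Since both sides of the proposed biconditional are literally the same statement, the equivalence holds.

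Next I would note that the order-reversing property of $\Sat^*$ and $\Sat_*^\G$ is immediate from this biconditional by the standard abstract argument for Galois connections: if $\calE \subseteq \calE'$ in $\Pow(\Eqn_\G(\cS))$, then $\Sat^*(\calE') \subseteq \Sat^*(\calE')$ trivially, so taking $\calA := \Sat^*(\calE')$ in the biconditional gives $\calE' \subseteq \Sat_*^\G\Sat^*(\calE')$, and then $\calE \subseteq \Sat_*^\G\Sat^*(\calE')$, whence by the biconditional in the other direction $\Sat^*(\calE') \subseteq \Sat^*(\calE)$; a symmetric argument handles $\Sat_*^\G$. Alternatively, order-reversal can be verified directly: if $\calE \subseteq \calE'$, then any $A$ satisfying every equation of $\calE'$ in particular satisfies every equation of $\calE$, so $\Sat^*(\calE') \subseteq \Sat^*(\calE)$, and dually for $\Sat_*^\G$.

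There is no real obstacle here; the only subtlety worth flagging is set-theoretic: the sets $\Pow(\Eqn_\G(\cS))$ and $\Pow(\cS\Alg)$ may be large, but the proof is insensitive to this since it proceeds entirely pointwise in $A$ and $(t,u)$. I would conclude with a one-line remark that this is a standard instance of the Galois connection induced by a relation, which is precisely the universal property asserted.
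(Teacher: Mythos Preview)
Your proposal is correct and matches the paper's approach exactly: the paper does not even write out a proof, instead noting just before the proposition that it ``follows immediately from the definitions and is an instance of the well-known fact that any relation between sets induces a Galois connection between their power sets,'' and then placing a \qed at the end of the statement. Your unpacking of the biconditional via $\calA \models \calE$ is precisely the intended (trivial) verification.
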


\noindent In the case where $\G = \ob\C$, \ref{Galoisconnectionprop} specializes to yield a Galois connection $\Sat^* \dashv \Sat_*:\Pow(\cS\Alg)^\op \rightarrow \Pow(\Eqn(\cS))$.

We recover the classical Galois connection of Birkhoff \cite{Birkhoff:Lattice} from \ref{Galoisconnectionprop} by taking $\C = \V = \Set$, $\J = \SF(\Set)$ (the finite cardinals), and $\G = \{1\}$.  In the classical setting, varieties over a given signature $\cS$ are the fixed points of the idempotent monad on $\Pow(\cS\Alg)$ induced by this Galois connection.  The same is true in our current general setting \textit{if the generator $\G$ is small} (\ref{small_gen_gcx}).  But in general we do not assume that $\G$ is small, and while the fixed points of the idempotent monad on $\Pow(\cS\Alg)$ induced by \ref{Galoisconnectionprop} are those of the form $\Sat^*(\calE)$ for a set of $\G$-parametrized $\cS$-equations $\calE$, the set $\calE$ need not be small, so in view of \ref{rmk:basic_var_charn} it is not clear that ${\Sat}^*(\calE)$ would be a $\J$-ary variety.  Indeed, we intend to accommodate, for example, the case where $\G = \ob\C$.

For this reason, we now define certain mild size conditions on sets of  parametrized $\cS$-equations and $\cS$-algebras, and we proceed to show that the Galois connection of \ref{Galoisconnectionprop} restricts to sets satisying these conditions:

\begin{defn}
\label{tame}
A set $\mathcal{E}$ of parametrized $\cS$-equations is \textbf{tame} if there is a \emph{small} set $\calF$ of parametrized $\cS$-equations such that $\Sat^*(\mathcal{E}) = \Sat^*\left(\calF\right)$. A set $\mathcal{A}$ of $\cS$-algebras is \textbf{tame} if there is a \emph{small} set $\mathcal{B}$ of $\cS$-algebras such that $\Sat_*(\mathcal{A}) = \Sat_*\left(\mathcal{B}\right)$. \qed
\end{defn}

\begin{rmk}\label{rmk:var_charn_tame}
In view of \ref{rmk:basic_var_charn}, a set $\calE$ of parametrized $\cS$-equations is tame iff $\Sat^*(\calE)$ is a $\J$-ary variety over $\cS$. \qed
\end{rmk}

\noindent Note that Definition \ref{tame} does not involve $\G$ at all.  But \ref{recover_satstar_from_satstarg} immediately entails the following characterization of tame sets of $\cS$-algebras in terms of \textit{$\G$-parametrized} $\cS$-equations:

\begin{prop}\label{gtame_iff_tame}
A set $\calA$ of $\cS$-algebras is tame if and only if there is a small set $\mathcal{B}$ of $\cS$-algebras with $\Sat_*^\G(\mathcal{A}) = \Sat_*^\G\left(\mathcal{B}\right)$.
\end{prop}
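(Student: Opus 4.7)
The plan is to derive this equivalence directly from Remark \ref{recover_satstar_from_satstarg}, which furnishes two mutually inverse passages between $\Sat_*(\calA)$ and $\Sat_*^\G(\calA)$ for any set $\calA$ of $\cS$-algebras: namely, $\Sat_*^\G(\calA) = \Sat_*(\calA) \cap \Eqn_\G(\cS)$, and $\Sat_*(\calA) = \{(t,u) \in \Eqn(\cS) \mid [t \doteq u]_\G \subseteq \Sat_*^\G(\calA)\}$. In each direction, the same witness $\calB$ used for tameness in one sense will serve in the other.

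For the forward direction, I would assume $\calA$ is tame in the sense of \ref{tame}, so that there is a small set $\calB$ of $\cS$-algebras with $\Sat_*(\calA) = \Sat_*(\calB)$. Intersecting both sides with $\Eqn_\G(\cS)$ and invoking the first formula from \ref{recover_satstar_from_satstarg} then yields $\Sat_*^\G(\calA) = \Sat_*^\G(\calB)$, so the same small $\calB$ witnesses the condition on the right-hand side of the proposition.

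For the backward direction, I would assume there is a small set $\calB$ with $\Sat_*^\G(\calA) = \Sat_*^\G(\calB)$. Applying the second formula from \ref{recover_satstar_from_satstarg} to both $\calA$ and $\calB$, I obtain
\[
\Sat_*(\calA) \;=\; \{(t,u) \in \Eqn(\cS) \mid [t \doteq u]_\G \subseteq \Sat_*^\G(\calA)\} \;=\; \{(t,u) \in \Eqn(\cS) \mid [t \doteq u]_\G \subseteq \Sat_*^\G(\calB)\} \;=\; \Sat_*(\calB),
\]
which shows that $\calA$ is tame as witnessed by the same small $\calB$.

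There is essentially no obstacle here, since Remark \ref{recover_satstar_from_satstarg} (which is itself a consequence of \ref{factor_through_satisf} and the joint faithfulness of the functors $\C_0(G,-)$ for $G \in \G$) does all the real work. The only point worth underlining is that the witness $\calB$ transfers between the two formulations without modification, so no further appeal to Proposition \ref{factor_through_satisf} or to the Galois connection \ref{Galoisconnectionprop} is required.
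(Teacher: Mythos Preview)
Your proposal is correct and is essentially the same as the paper's approach: the paper states that the result is immediately entailed by Remark \ref{recover_satstar_from_satstarg}, and your proof simply unpacks that remark in each direction, using the same small witness $\calB$ throughout.
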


\begin{lem}
\label{square_brackets_small_tame}
If $\calF$ is a small set of parametrized $\cS$-equations, then $[\calF]_\G$ is a tame set of $\G$-parametrized $\cS$-equations.
\end{lem}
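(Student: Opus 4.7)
The plan is to observe that this lemma is essentially an immediate consequence of Corollary \ref{variety_equality} together with the definition of tameness, so no new ideas are required; the witness of tameness for $[\calF]_\G$ can be taken to be $\calF$ itself.

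In detail, I would first note that since every $\G$-parametrized $\cS$-equation is in particular a parametrized $\cS$-equation, the set $[\calF]_\G$ is a set of parametrized $\cS$-equations, and so the definition of tameness in \ref{tame} applies to it. To exhibit tameness, I must produce a small set of parametrized $\cS$-equations $\calF'$ with $\Sat^*\bigl([\calF]_\G\bigr) = \Sat^*(\calF')$. The hypothesis that $\calF$ is small suggests taking $\calF' := \calF$, and then the required equality $\Sat^*\bigl([\calF]_\G\bigr) = \Sat^*(\calF)$ is precisely the content of Corollary \ref{variety_equality}.

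There is essentially no obstacle here: the only slightly subtle point is to confirm that the set of \emph{parametrized} $\cS$-equations $[\calF]_\G$ (viewed as a set of parametrized $\cS$-equations, even though all of its members happen to have parameter objects in $\G$) is what we are asked to show is tame, and that the definition of tameness is to be interpreted in that sense. Once this is noted, the proof is a one-line citation of \ref{variety_equality}.
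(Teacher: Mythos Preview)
Your proposal is correct and matches the paper's own proof, which simply states that the result follows immediately from Corollary \ref{variety_equality}. Your observation that $\calF$ itself serves as the small witnessing set is exactly the intended argument.
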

\begin{proof}
This follows immediately from \ref{variety_equality}.
\end{proof}

\begin{lem}
\label{presentationtamelem}
Let $\mathcal{A}$ be a small set of $\cS$-algebras. Then $\Sat_*^\G(\calA)$ is a tame set of parametrized $\cS$-equations.
\end{lem}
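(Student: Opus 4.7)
The plan is to exhibit a small set $\calF$ of parametrized $\cS$-equations with $\Sat^*(\Sat_*^\G(\calA)) = \Sat^*(\calF)$. The first step is to observe that $\Sat^*(\Sat_*^\G(\calA)) = \Sat^*(\Sat_*(\calA))$: the inclusion $\supseteq$ is immediate from $\Sat_*^\G(\calA) \subseteq \Sat_*(\calA)$, while for the reverse inclusion, if $B$ lies in $\Sat^*(\Sat_*^\G(\calA))$ and $t \doteq u \in \Sat_*(\calA)$, then \ref{factor_through_satisf} applied to $\calA$ shows $[t \doteq u]_\G \subseteq \Sat_*^\G(\calA)$, whence $B \models [t \doteq u]_\G$, and applying \ref{factor_through_satisf} once more to $\{B\}$ yields $B \models t \doteq u$. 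Thus it suffices to produce a small $\calF$ with $\Sat^*(\calF) = \Sat^*(\Sat_*(\calA))$.

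To construct $\calF$, I would use the interpretation morphisms $\ii^A : \T_\cS \to \langle A, A\rangle$ from \ref{algebrasforpresentation}. By \ref{interp_morph_vs_interp_of_t}, a parametrized $\cS$-equation $t \doteq u : C \rightrightarrows T_\cS J$ lies in $\Sat_*(\calA)$ iff $\ii^A_J \circ t = \ii^A_J \circ u$ for every $A \in \calA$. Using the smallness of $\calA$ and the completeness of $\C$, for each $J \in \ob\J$ I form the product $P_J := \prod_{A \in \calA} [\C(J,A),A]$ in $\C$, the joint morphism $\Phi_J := \langle \ii^A_J\rangle_{A \in \calA} : T_\cS J \to P_J$, and the kernel pair $p_J, q_J : R_J \rightrightarrows T_\cS J$ of $\Phi_J$ in $\C$. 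Let
\[ \calF := \{\, p_J \doteq q_J : R_J \rightrightarrows T_\cS J \mid J \in \ob\J\,\}, \]
which is a small set of parametrized $\cS$-equations since $\ob\J$ is small. Each $p_J \doteq q_J$ lies in $\Sat_*(\calA)$ by the construction of $R_J$, so $\Sat^*(\calF) \supseteq \Sat^*(\Sat_*(\calA))$. Conversely, if $B \models \calF$ and $t \doteq u \in \Sat_*(\calA)$ with arity $J$, the universal property of the kernel pair produces $r : C \to R_J$ with $p_J \circ r = t$ and $q_J \circ r = u$, whence $\ii^B_J \circ t = \ii^B_J \circ p_J \circ r = \ii^B_J \circ q_J \circ r = \ii^B_J \circ u$, giving $B \models t \doteq u$.

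There is no serious obstacle here; the main conceptual step is the structural observation that the (possibly large) collection $\Sat_*(\calA)$ of equations is controlled, arity by arity, by a single kernel-pair equation associated to the joint interpretation $\Phi_J$, and that $\ob\J$ is small. Note that this argument in fact shows the stronger statement that $\Sat^*(\Sat_*^\G(\calA))$ is a $\J$-ary variety over $\cS$, which foreshadows a characterization of $\J$-ary varieties as the fixed points of the closure operator $\Sat^* \circ \Sat_*^\G$ on tame classes of $\cS$-algebras.
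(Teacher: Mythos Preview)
Your proof is correct and uses essentially the same core idea as the paper: for each $J \in \ob\J$, form the kernel pair of the joint interpretation morphism $T_\cS J \to \prod_{A \in \calA}\langle A,A\rangle J$, and collect these kernel-pair equations into a small set $\calF$. The only organizational difference is that you first reduce to showing $\Sat^*(\Sat_*^\G(\calA)) = \Sat^*(\Sat_*(\calA))$ via \ref{factor_through_satisf} and then argue with arbitrary parametrized $\cS$-equations, whereas the paper works with $\G$-parametrized equations throughout and proves the sharper identity $\Sat_*^\G(\calA) = [\calF]_\G$ directly (invoking \ref{square_brackets_small_tame} to conclude). Both routes are short and rest on the same kernel-pair observation; the paper's version yields a slightly stronger intermediate statement, while yours makes the appeal to the definition of tameness a touch more transparent.
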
 

\begin{proof}
For each $J \in \ob\J$, let $\bar{\ii}_J : T_\cS J \to \prod_{A \in \mathcal{A}} \ \langle A, A\rangle J$ be the morphism in $\C$ induced by the interpretation morphisms $\ii_J^A :  T_\cS J \to \langle A, A\rangle J$ ($A \in \calA$) of \ref{algebrasforpresentation}, where we regard each $A \in \calA$ equivalently as a $\T_\cS$-algebra (\ref{salg_jmonadic}), and let $p^1_J, p^2_J : C_J \rightrightarrows T_\cS J$ be the kernel pair of $\bar{\ii}_J$. Now let $\calF$ be the set consisting of all the parametrized $\cS$-equations $p^1_J \doteq p^2_J$ with $J \in \ob\J$.  Then $\calF$ is small, so by \ref{square_brackets_small_tame} it suffices to show that $\Sat_*^\G(\calA) = [\calF]_\G$. For each $\G$-parametrized $\cS$-equation $t, u : G \rightrightarrows T_\cS J$ ($J \in \ob\J, G \in \G$), we have that $(t, u) \in [\calF]_\G$ iff $t, u$ factors through the kernel pair of $\bar{\ii}_J$, iff $\bar{\ii}_J \circ t = \bar{\ii}_J \circ u$, iff $\ii^A_J \circ t = \ii^A_J \circ u$ for all $A \in \calA$, iff $A \models t \doteq u$ for all $A \in \calA$ (by \ref{interp_morph_vs_interp_of_t}), iff $(t, u) \in \Sat_*^\G(\calA)$, as desired.          
\end{proof}

\begin{prop}
\label{tameprop1}
Let $\calA$ be a tame set of $\cS$-algebras. Then $\Sat_*^\G(\mathcal{A})$ is a tame set of parametrized $\cS$-equations.
\end{prop}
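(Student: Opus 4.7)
The plan is to reduce the tame case to the small case already treated in Lemma \ref{presentationtamelem}, using the fact that $\Sat_*^\G$ is recoverable from $\Sat_*$ by intersection with $\Eqn_\G(\cS)$. First, since $\calA$ is tame, there is a small set $\calB$ of $\cS$-algebras with $\Sat_*(\calA) = \Sat_*(\calB)$. By Remark \ref{recover_satstar_from_satstarg}, $\Sat_*^\G(-)$ is simply the intersection of $\Sat_*(-)$ with $\Eqn_\G(\cS)$, so this immediately yields $\Sat_*^\G(\calA) = \Sat_*^\G(\calB)$.

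Next, I would invoke Lemma \ref{presentationtamelem} applied to the small set $\calB$, which produces a small set $\calF$ of parametrized $\cS$-equations satisfying $\Sat^*\bigl(\Sat_*^\G(\calB)\bigr) = \Sat^*(\calF)$. Combining this with the equality of the previous paragraph gives $\Sat^*\bigl(\Sat_*^\G(\calA)\bigr) = \Sat^*(\calF)$, which is exactly what is needed to conclude, via Definition \ref{tame}, that $\Sat_*^\G(\calA)$ is a tame set of parametrized $\cS$-equations.

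There is no real obstacle here; the argument is essentially a bookkeeping step that chains the two previously established facts. The only point requiring a moment of care is that tameness of a set of \emph{equations} in \ref{tame} is phrased in terms of equality of the $\Sat^*$-closures, not the sets themselves, so one must ensure the equalities are applied in the correct direction: the equality $\Sat_*^\G(\calA) = \Sat_*^\G(\calB)$ at the level of equation-sets is precisely what is needed to transport the tameness conclusion of \ref{presentationtamelem} for $\Sat_*^\G(\calB)$ across to $\Sat_*^\G(\calA)$.
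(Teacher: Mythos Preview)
Your proof is correct and takes essentially the same approach as the paper: the paper's proof invokes Proposition~\ref{gtame_iff_tame} directly to obtain a small $\calB$ with $\Sat_*^\G(\calA) = \Sat_*^\G(\calB)$ and then applies Lemma~\ref{presentationtamelem}, whereas you unpack~\ref{gtame_iff_tame} by going through Definition~\ref{tame} and Remark~\ref{recover_satstar_from_satstarg}. The extra step of spelling out what tameness of $\Sat_*^\G(\calB)$ means via $\Sat^*$ is accurate but unnecessary, since once $\Sat_*^\G(\calA) = \Sat_*^\G(\calB)$ the tameness of one is literally the tameness of the other.
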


\begin{proof}
By \ref{gtame_iff_tame}, there is a small set $\mathcal{B}$ of $\cS$-algebras with $\Sat_*^\G(\mathcal{A}) = \Sat_*^\G\left(\mathcal{B}\right)$, and the latter is tame by  \ref{presentationtamelem}.  
\end{proof} 

\begin{para}\label{prelims_free_alg_satis_prop}
Let $\cP = (\cS,\sfE)$ be a free-form $\J$-presentation over $\cS$, let $\T_\cP$ be the $\J$-ary $\V$-monad presented by $\cP$ (\ref{vmnd_pres_by_ff_jpres}), and let $q : \T_\cS \to \T_\cP$ be the regular epimorphism in $\Mnd_{\underJ}(\C)$ that presents $\T_\cP = \T_\cS / \sfE$ as a quotient (\ref{algebrasforpresentation}, \ref{ae_jary_monadic}, \ref{vmnd_pres_by_ff_jpres}).   Then each $\T_\cP$-algebra $A = (A,a)$ corresponds via  \ref{vmnd_pres_by_ff_jpres} to a $\cP$-algebra that we denote also by $A$, but then $A$ is in particular an $\cS$-algebra and corresponds via \ref{salg_jmonadic} to the $\T_\cS$-algebra $(A,a \circ q_A)$, by \ref{algebrasforpresentation}.  Hence, \ref{teqn_equalized_by_mnd_morph} immediately entails the following:
\end{para}

\begin{prop}
\label{free_alg_satis_prop}
In the situation of \ref{prelims_free_alg_satis_prop}, let $t, u : C \rightrightarrows T_\cS J$ be a parametrized $\cS$-equation (where $J \in \ob\J$ and $C \in \ob\C$). Then the following are equivalent: (1) every $\cP$-algebra satisfies $t \doteq u$; (2) every free $\cP$-algebra on an object of $\J$ satisfies $t \doteq u$; (3) the free $\cP$-algebra $T_\cP J$ on $J$ satisfies $t \doteq u$; (4) $q_J \circ t = q_J \circ u$.
\end{prop}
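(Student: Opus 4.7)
The plan is to reduce the proposition to a direct application of Corollary \ref{teqn_equalized_by_mnd_morph} with the $\V$-monad morphism $\lambda = q : \T_\cS \to \T_\cP$ and the parametrized $\T_\cS$-equation $t \doteq u : C \rightrightarrows T_\cS J$. Before doing so, I would first translate the notion of satisfaction for $\cP$-algebras into the monad-morphism framework of \ref{mnd_morph_alg}: by \ref{prelims_free_alg_satis_prop}, each $\T_\cP$-algebra $(A,a)$ corresponds to a $\cP$-algebra whose underlying $\T_\cS$-algebra is $(A, a \circ q_A)$, which is precisely the $\T_\cS$-algebra structure used in \ref{mnd_morph_alg} to say that the $\T_\cP$-algebra $(A,a)$ satisfies $t \doteq u$ via $q$. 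Hence a $\cP$-algebra $A$ satisfies $t \doteq u$ as an $\cS$-algebra (equivalently, as a $\T_\cS$-algebra, via the correspondence of \ref{salg_jmonadic}) iff $A$ satisfies $t \doteq u$ as a $\T_\cP$-algebra in the sense of \ref{mnd_morph_alg}.

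With this identification in hand, Corollary \ref{teqn_equalized_by_mnd_morph} applied to $q$ immediately yields the equivalence of (1), (3), and (4): its clause ``every $\mathbb{S}$-algebra $A$ satisfies $t \doteq u$'' translates to (1); its clause ``the free $\mathbb{S}$-algebra on $J$ satisfies $t \doteq u$'' translates to (3); and its last clause ``$\lambda_J \circ t = \lambda_J \circ u$'' is exactly (4). The remaining implications (1) $\Rightarrow$ (2) $\Rightarrow$ (3) are trivial, since every free $\cP$-algebra on an object of $\J$ is in particular a $\cP$-algebra, and since $T_\cP J$ is itself the free $\cP$-algebra on the object $J \in \ob\J$.

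There is no real obstacle here, exactly as the sentence preceding the statement already indicates: the substantive content lies in \ref{teqn_equalized_by_mnd_morph} (via \ref{mnd_morph_interp} and \ref{teqns_that_hold_in_every_talg}) together with the bookkeeping of \ref{prelims_free_alg_satis_prop} identifying $\cP$-algebras with $\T_\cP$-algebras in a way compatible with $q$. Once the translation through $q$ has been set up, the proposition is essentially a direct instantiation of the earlier corollary.
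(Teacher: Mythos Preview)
Your proposal is correct and follows exactly the paper's approach: the paper's proof consists solely of the sentence ``Hence, \ref{teqn_equalized_by_mnd_morph} immediately entails the following,'' and your argument is a faithful unpacking of that claim, including the translation via \ref{prelims_free_alg_satis_prop} and the trivial chain $(1)\Rightarrow(2)\Rightarrow(3)$ to handle the extra clause not appearing in \ref{teqn_equalized_by_mnd_morph}.
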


\begin{cor}
\label{presentationtheoryprop}
Let $\cP$ be a free-form $\J$-presentation over $\cS$. Then $\Sat_*^\G(\ob(\cP\Alg)) = \Sat_*^\G\left(\{T_\cP J \mid J \in \ob\J\}\right)$. \qed
\end{cor}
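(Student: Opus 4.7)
The statement is essentially an immediate reformulation of Proposition \ref{free_alg_satis_prop}, so the plan is to unpack the two sides and invoke that result.

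First I would unpack the definitions: by \ref{varietytheory}, the left-hand side consists of those $\G$-parametrized $\cS$-equations $t \doteq u : G \rightrightarrows T_\cS J$ that are satisfied by \emph{every} $\cP$-algebra, while the right-hand side consists of those satisfied by every free $\cP$-algebra $T_\cP J$ on an object $J \in \ob\J$. One inclusion, namely $\Sat_*^\G(\ob(\cP\Alg)) \subseteq \Sat_*^\G(\{T_\cP J \mid J \in \ob\J\})$, is immediate from the fact that the set $\{T_\cP J \mid J \in \ob\J\}$ is contained in $\ob(\cP\Alg)$ (since each free $\cP$-algebra $T_\cP J$ on $J \in \ob\J$ is in particular a $\cP$-algebra) together with the evident order-reversing nature of $\Sat_*^\G$ in its argument.

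For the reverse inclusion, I would take an arbitrary $\G$-parametrized $\cS$-equation $t \doteq u : G \rightrightarrows T_\cS J$ satisfied by every $T_\cP J$ (which in particular is a parametrized $\cS$-equation in the sense of \ref{param_Seqn}, with arity $J \in \ob\J$ and parameter $G \in \G \subseteq \ob\C$), and apply Proposition \ref{free_alg_satis_prop}: satisfaction by $T_\cP J$ is condition (3), which is equivalent to condition (1), namely satisfaction by every $\cP$-algebra. This gives $(t,u) \in \Sat_*^\G(\ob(\cP\Alg))$, completing the proof.

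There is no substantive obstacle here; the only thing to verify is that the $\G$-parametrized $\cS$-equations form a subclass of the parametrized $\cS$-equations to which Proposition \ref{free_alg_satis_prop} applies, and this is immediate from \ref{powerclasses} and \ref{param_Seqn}.
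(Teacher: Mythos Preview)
Your proof is correct and is essentially the argument the paper intends: the corollary is marked with \qed\ and is meant to follow immediately from Proposition \ref{free_alg_satis_prop}, which is precisely what you invoke after unpacking the definitions. The only cosmetic issue is the reuse of the letter $J$ for both the arity of the fixed equation and the variable ranging over $\ob\J$; distinguishing these (say, writing the equation as $t \doteq u : G \rightrightarrows T_\cS J$ and the free algebras as $T_\cP K$ for $K \in \ob\J$) would make your appeal to condition (3) of \ref{free_alg_satis_prop} read more cleanly.
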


\noindent Since $\J$ is small, \ref{presentationtheoryprop} entails the following (in view of \ref{gtame_iff_tame}):

\begin{cor}
\label{Jary_variety_tame}
Every $\J$-ary variety over $\cS$ is a tame set of $\cS$-algebras. 
\end{cor}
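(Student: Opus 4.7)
The plan is to combine Corollary \ref{presentationtheoryprop} with Proposition \ref{gtame_iff_tame} in a direct manner. Let $\calA$ be a $\J$-ary variety over $\cS$, so that by definition $\calA = \ob(\cP\Alg)$ for some free-form $\J$-presentation $\cP = (\cS,\sfE)$ over $\cS$.

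First, I would invoke Corollary \ref{presentationtheoryprop} to obtain the equality
\[ \Sat_*^\G(\ob(\cP\Alg)) \;=\; \Sat_*^\G\bigl(\{T_\cP J \mid J \in \ob\J\}\bigr), \]
so that with $\calB := \{T_\cP J \mid J \in \ob\J\}$ (regarded as a set of $\cS$-algebras via the forgetful $\V$-functor $\cP\Alg \to \cS\Alg$) we have $\Sat_*^\G(\calA) = \Sat_*^\G(\calB)$. Since $\J$ is small by our standing assumptions in \ref{subcategoryaritiesassumptions}, the set $\calB$ is a small set of $\cS$-algebras. Finally, I would appeal to Proposition \ref{gtame_iff_tame}, which tells us that a set of $\cS$-algebras is tame exactly when it admits such a small witness set with respect to $\Sat_*^\G$; hence $\calA$ is tame.

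There is no real obstacle: every piece of the argument has already been assembled in the immediately preceding results, so the proof is a one-line combination. The only small point to be careful about is confirming that the small index set $\ob\J$ indeed yields a small set $\{T_\cP J \mid J \in \ob\J\}$ of $\cS$-algebras, but this is immediate because $T_\cP J$ is a single object of $\C$ for each $J \in \ob\J$ and $\ob\J$ is small.
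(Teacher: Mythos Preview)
Your proposal is correct and matches the paper's own argument exactly: the paper simply notes that since $\J$ is small, Corollary \ref{presentationtheoryprop} together with Proposition \ref{gtame_iff_tame} yields the result.
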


\begin{cor}
\label{tameprop2}
Let $\calE$ be a tame set of parametrized $\cS$-equations. Then $\Sat^*(\calE)$ is a tame set of $\cS$-algebras (as it is a $\J$-ary variety over $\cS$, by \ref{rmk:var_charn_tame}). \qed 
\end{cor}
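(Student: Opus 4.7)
The proof is essentially a two-step synthesis of results stated immediately before the corollary, and the parenthetical in the statement already sketches the argument. Unpacking the tameness hypothesis on $\calE$ via Definition \ref{tame}, I obtain a \emph{small} set $\calF$ of parametrized $\cS$-equations with $\Sat^*(\calE) = \Sat^*(\calF)$. Because $\calF$ is small, by the observation in \ref{class_vs_Jpres} the pair $\cP = (\cS, \calF)$ constitutes a free-form $\J$-presentation, and by the definition of $\cP$-algebras (\ref{ff_Jpres}) together with the definition of $\Sat^*$ in \ref{varietytheory}, we have
\[
\ob(\cP\Alg) \;=\; \Sat^*(\calF) \;=\; \Sat^*(\calE).
\]
Hence $\Sat^*(\calE)$ is a $\J$-ary variety over $\cS$, which is exactly the content of Remark \ref{rmk:var_charn_tame} applied to the tame set $\calE$.

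Having exhibited $\Sat^*(\calE)$ as a $\J$-ary variety over $\cS$, I conclude by appealing to Corollary \ref{Jary_variety_tame}, which states that every $\J$-ary variety over $\cS$ is a tame set of $\cS$-algebras. This immediately gives the desired tameness of $\Sat^*(\calE)$. I do not expect any obstacle: each step is either a definitional unpacking or a direct citation. The only conceptual point worth flagging is that ``tame'' has two distinct meanings in \ref{tame}---one for sets of parametrized $\cS$-equations and one for sets of $\cS$-algebras---and the proof works precisely because the bridge between these two notions is supplied by the pair of results \ref{rmk:var_charn_tame} (linking tame equation-sets to $\J$-ary varieties) and \ref{Jary_variety_tame} (linking $\J$-ary varieties to tame algebra-sets).
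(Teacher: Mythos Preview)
Your proof is correct and follows essentially the same route as the paper: the parenthetical in the statement already indicates that $\Sat^*(\calE)$ is a $\J$-ary variety over $\cS$ by \ref{rmk:var_charn_tame}, and then tameness follows immediately from \ref{Jary_variety_tame}. Your version simply unpacks the content of \ref{rmk:var_charn_tame} explicitly, which is fine but not necessary.
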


\begin{defn}\label{defn_v_theta}
If $\calE$ is a tame set of parametrized $\cS$-equations, then we call $$\bbV(\calE) := \Sat^*(\calE)$$
the \textbf{variety described by $\calE$}.  If $\calA$ is a tame set of $\cS$-algebras, then we call
$$\Theta_\G(\calA) := \Sat_*^\G(\calA)$$
the \textbf{equational theory of $\calA$} (over $\cS$ with parameters in $\G$).  An \textbf{equational theory over $\cS$} (with parameters in $\G$) is a set $\calE$ of $\G$-parametrized $\cS$-equations such that $\calE = \Theta_\G(\calA)$ for some tame set $\calA$ of $\cS$-algebras (noting that $\calE$ itself is then tame by \ref{tameprop1}). \qed
\end{defn}

\noindent From \ref{tameprop1} and \ref{tameprop2} we now immediately obtain the following result.

\begin{theo}
\label{tame_Galois_connection}
The Galois connection of \ref{Galoisconnectionprop} restricts to a Galois connection between the poset $\Pow(\Eqn_\G(\cS))_{\mathsf{tame}}$ of all tame sets of $\G$-parametrized $\cS$-equations and the poset $\Pow(\cS\Alg)_{\mathsf{tame}}$ of all tame sets of $\cS$-algebras:
% https://q.uiver.app/?q=WzAsMixbMCwwLCJcXFBvdyhcXEVxbl9cXEcoXFxTaWdtYSkpIl0sWzMsMCwiXFxQb3coXFxTaWdtYVxcQWxnKV5cXG9wIl0sWzAsMSwiXFxtYXRoYmJ7Vn0iLDAseyJjdXJ2ZSI6LTJ9XSxbMSwwLCJcXFRoZXRhIiwwLHsiY3VydmUiOi0yfV0sWzMsMiwiIiwwLHsibGV2ZWwiOjEsInN0eWxlIjp7Im5hbWUiOiJhZGp1bmN0aW9uIn19XV0=
\[\begin{tikzcd}
	{\Pow(\Eqn_\G(\cS))_{\mathsf{tame}}} &&& {\Pow(\cS\Alg)_{\mathsf{tame}}^\op}
	\arrow[""{name=0, anchor=center, inner sep=0}, "{\bbV}", curve={height=-12pt}, from=1-1, to=1-4]
	\arrow[""{name=1, anchor=center, inner sep=0}, "\Theta_\G", curve={height=-12pt}, from=1-4, to=1-1]
	\arrow["\vdash"{anchor=center, rotate=90}, draw=none, from=1, to=0]
\end{tikzcd} \tag*{\qed}\]
\end{theo}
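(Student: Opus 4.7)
The plan is to show that the two legs of the Galois connection from Proposition \ref{Galoisconnectionprop} restrict to the claimed subposets, after which the Galois connection property is inherited for free.

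First, I would check that $\Theta_\G$ restricts appropriately. Given $\calA \in \Pow(\cS\Alg)_{\mathsf{tame}}$, the set $\Theta_\G(\calA) = \Sat_*^\G(\calA)$ lies in $\Pow(\Eqn_\G(\cS))$ by the very definition \eqref{eq:models_lowerstarg}, and Proposition \ref{tameprop1} ensures that $\Sat_*^\G(\calA)$ is tame. Hence $\Theta_\G$ sends $\Pow(\cS\Alg)_{\mathsf{tame}}$ into $\Pow(\Eqn_\G(\cS))_{\mathsf{tame}}$.

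Next I would verify that $\bbV$ restricts appropriately. Any $\calE \in \Pow(\Eqn_\G(\cS))_{\mathsf{tame}}$ is, in particular, a tame set of parametrized $\cS$-equations (in the sense of \ref{tame}), so Corollary \ref{tameprop2} yields that $\bbV(\calE) = \Sat^*(\calE)$ is a tame set of $\cS$-algebras, and indeed a $\J$-ary variety over $\cS$ by Remark \ref{rmk:var_charn_tame}. Thus $\bbV$ sends $\Pow(\Eqn_\G(\cS))_{\mathsf{tame}}$ into $\Pow(\cS\Alg)_{\mathsf{tame}}$.

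Finally, since Proposition \ref{Galoisconnectionprop} already exhibits $\Sat^* \dashv \Sat_*^\G$ as a Galois connection between the full power sets $\Pow(\Eqn_\G(\cS))$ and $\Pow(\cS\Alg)^\op$, the defining equivalence $\calA \subseteq \bbV(\calE) \Leftrightarrow \calE \subseteq \Theta_\G(\calA)$ restricts verbatim to the tame subposets, as do order-reversal of $\bbV$ and $\Theta_\G$. This yields the desired Galois connection. The theorem is thus a clean packaging of \ref{tameprop1} and \ref{tameprop2}, where the substantive work has already been done (via the kernel-pair construction in Lemma \ref{presentationtamelem} and the fact that every $\J$-ary variety is tame, Corollary \ref{Jary_variety_tame}); there is no further obstacle to address.
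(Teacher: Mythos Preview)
Your proposal is correct and follows essentially the same approach as the paper, which simply states that the result follows immediately from \ref{tameprop1} and \ref{tameprop2}. Your write-up merely unpacks this observation in slightly more detail.
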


\noindent We now show that the $\J$-ary varieties over $\cS$ are precisely the fixed points on one side of the Galois connection \eqref{tame_Galois_connection}:

\begin{theo}
\label{Jary_variety_fixedpoint}
Let $\calA$ be a (possibly large) set of $\cS$-algebras. Then the following are equivalent: (1) $\calA$ is a $\J$-ary variety over $\cS$; (2) $\calA$ is tame and $\calA = \bbV(\Theta_\G(\calA))$; (3) there is a tame set $\calE$ of $\G$-parametrized $\cS$-equations such that $\calA = \bbV(\calE)$. 
\end{theo}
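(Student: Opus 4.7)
The plan is to prove the cyclic implications $(1) \Rightarrow (2) \Rightarrow (3) \Rightarrow (1)$. The implications $(2) \Rightarrow (3)$ and $(3) \Rightarrow (1)$ are essentially formal. For $(2) \Rightarrow (3)$ I would simply take $\calE := \Theta_\G(\calA)$; this is tame by \ref{tameprop1}, and $\calA = \bbV(\calE)$ by hypothesis. For $(3) \Rightarrow (1)$ I would unpack the tameness of $\calE$ to produce a small set $\calF$ of parametrized $\cS$-equations with $\Sat^*(\calE) = \Sat^*(\calF)$, so that $\calA = \bbV(\calE) = \Sat^*(\calF)$ is a $\J$-ary variety over $\cS$ by the characterization recorded in \ref{rmk:basic_var_charn}.

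The substantive direction is $(1) \Rightarrow (2)$. Tameness of $\calA$ is immediate from \ref{Jary_variety_tame}, so the only task is to verify $\calA = \bbV(\Theta_\G(\calA)) = \Sat^*(\Sat_*^\G(\calA))$. I would factor this equality through the larger closure $\Sat^*(\Sat_*(\calA))$ in two steps. First, writing $\calA = \ob(\cP\Alg)$ for some free-form $\J$-presentation $\cP = (\cS,\sfE)$, I would observe that $\sfE \subseteq \Sat_*(\calA)$ (every $\cP$-algebra satisfies $\sfE$), whence order-reversal of $\Sat^*$ yields $\Sat^*(\Sat_*(\calA)) \subseteq \Sat^*(\sfE) = \calA$; combined with the unit inequality $\calA \subseteq \Sat^*(\Sat_*(\calA))$ of the Galois connection of \ref{Galoisconnectionprop}, this gives $\calA = \Sat^*(\Sat_*(\calA))$. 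Second, I would show $\Sat^*(\Sat_*^\G(\calA)) = \Sat^*(\Sat_*(\calA))$: the inclusion $\supseteq$ is order-reversal applied to $\Sat_*^\G(\calA) \subseteq \Sat_*(\calA)$, while for $\subseteq$, given any $\cS$-algebra $A \in \Sat^*(\Sat_*^\G(\calA))$ and any $(t,u) \in \Sat_*(\calA)$, I would invoke \ref{factor_through_satisf} applied to $\calA$ to conclude that $[t \doteq u]_\G \subseteq \Sat_*^\G(\calA)$, so $A \models [t \doteq u]_\G$, and then \ref{factor_through_satisf} applied to $\{A\}$ yields $A \models t \doteq u$, as required.

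The main obstacle is thus the comparison of $\Sat_*^\G$-closure and $\Sat_*$-closure in the second step above: the $\G$-parametrized equations must be shown to detect precisely the same $\cS$-algebras as arbitrary parametrized equations. This detection rests entirely on \ref{factor_through_satisf}, itself a consequence of the standing assumption that $\G$ is a generating set for $\C_0$. Once this is in place, no further technology beyond the Galois connection of \ref{Galoisconnectionprop} and the presentation-based description $\calA = \Sat^*(\sfE)$ is needed to close the cycle.
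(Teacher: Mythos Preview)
Your proof is correct, but it is organized differently from the paper's. The paper establishes the equivalence of (2) and (3) in one stroke by invoking the Galois connection of \ref{tame_Galois_connection} (fixed points on one side are precisely the images from the other), proves $(3)\Rightarrow(1)$ via \ref{rmk:var_charn_tame}, and then proves $(1)\Rightarrow(3)$ directly: given $\calA = \Sat^*(\calF)$ for a small $\calF$, one takes $\calE := [\calF]_\G$, which is a tame set of $\G$-parametrized $\cS$-equations by \ref{square_brackets_small_tame}, and $\calA = \Sat^*([\calF]_\G)$ by \ref{variety_equality}. Thus the paper never needs to verify the fixed-point equation $\calA = \bbV(\Theta_\G(\calA))$ by hand.

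Your route instead proves $(1)\Rightarrow(2)$ directly, and the substance of your argument---first showing $\calA = \Sat^*(\Sat_*(\calA))$ via $\sfE \subseteq \Sat_*(\calA)$, then identifying $\Sat^*(\Sat_*^\G(\calA))$ with $\Sat^*(\Sat_*(\calA))$ via two applications of \ref{factor_through_satisf}---is a valid unpacking of what the paper compresses into \ref{variety_equality} and the Galois-connection formalism. What you gain is an explicit, self-contained verification that $\G$-parametrized equations detect the same algebras as arbitrary ones (your second step holds for any $\calA$, not just varieties); what the paper gains is brevity, since the detection step is already packaged in \ref{variety_equality} and \ref{square_brackets_small_tame}, and the passage $(3)\Rightarrow(2)$ comes for free from general Galois-connection theory rather than requiring a separate argument.
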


\begin{proof}
(2) and (3) are equivalent by \ref{tame_Galois_connection}, and the standard theory of Galois connections, and (3) implies (1) by \ref{rmk:var_charn_tame}. If (1) holds, then there is a small set $\calF$ of parametrized $\cS$-equations with $\calA = \Sat^*(\calF) = \Sat^*([\calF]_\G)$, by \ref{rmk:basic_var_charn} and \ref{variety_equality}, but $[\calF]_\G$ is a tame set of $\G$-parametrized $\cS$-equations since $\calF$ is small (\ref{square_brackets_small_tame}).
\end{proof}

\noindent Invoking \ref{tame_Galois_connection} and \ref{Jary_variety_fixedpoint} in the case where $\G = \ob\C$, we obtain the following:

\begin{cor}\label{galois_cx_obc}
Writing $\Theta := \Theta_{\ob\C}$, there is a Galois connection
$$\bbV \dashv \Theta:\Pow(\cS\Alg)_{\mathsf{tame}}^\op \rightarrow \Pow(\Eqn(\cS))_{\mathsf{tame}}\;,$$
and the following are equivalent for a set of $\cS$-algebras $\calA$: (1) $\calA$ is a $\J$-ary variety over $\cS$; (2) $\calA$ is tame and $\calA = \bbV(\Theta(\calA))$; (3) there is a tame set $\calE$ of parametrized $\cS$-equations such that $\calA = \bbV(\calE)$. \qed
\end{cor}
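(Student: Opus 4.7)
The plan is to deduce this result as a direct specialization of the preceding Theorems \ref{tame_Galois_connection} and \ref{Jary_variety_fixedpoint}, by observing that the choice $\G = \ob\C$ satisfies all the standing hypotheses for a generating set of $\C_0$.

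First I would verify that $\G := \ob\C$ is indeed a valid choice: the representable functors $\C_0(C,-):\C_0 \rightarrow \Set$ with $C \in \ob\C$ are trivially jointly faithful (a single one, $\C_0(C,-)$, already distinguishes parallel arrows with domain $C$), so the blanket requirement on $\G$ imposed at the start of \S\ref{Galoisconnection} is satisfied. With this choice, unwinding the definitions in \ref{powerclasses} and \ref{varietytheory} yields $\Eqn_{\ob\C}(\cS) = \Eqn(\cS)$ and $\Sat^{\ob\C}_*(\calA) = \Sat_*(\calA)$, so that $\Theta_{\ob\C}(\calA) = \Sat_*(\calA)$ for every set $\calA$ of $\cS$-algebras. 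Under the abbreviation $\Theta := \Theta_{\ob\C}$, the sets $\Pow(\Eqn_\G(\cS))_{\mathsf{tame}}$ and $\Pow(\Eqn(\cS))_{\mathsf{tame}}$ are literally the same.

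Having made these identifications, the Galois connection $\bbV \dashv \Theta:\Pow(\cS\Alg)_{\mathsf{tame}}^\op \rightarrow \Pow(\Eqn(\cS))_{\mathsf{tame}}$ is then an immediate instance of \ref{tame_Galois_connection} for $\G = \ob\C$. Likewise, the equivalences (1) $\Leftrightarrow$ (2) $\Leftrightarrow$ (3) follow by applying \ref{Jary_variety_fixedpoint} to the same instance, noting once more that ``tame set of $\G$-parametrized $\cS$-equations'' coincides with ``tame set of parametrized $\cS$-equations'' when $\G = \ob\C$.

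There is no substantive obstacle to overcome: all the work has already been carried out in \ref{tame_Galois_connection} and \ref{Jary_variety_fixedpoint}, which were proved for arbitrary generating sets $\G$. The only thing to confirm is the trivial bookkeeping that $\G = \ob\C$ fits the setup and that the specialized notations match up, after which the statement drops out without further argument.
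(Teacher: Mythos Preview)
Your proposal is correct and follows exactly the approach the paper takes: the corollary is stated as an immediate consequence of \ref{tame_Galois_connection} and \ref{Jary_variety_fixedpoint} in the special case $\G = \ob\C$, with no additional argument given. Your verification that $\ob\C$ is a generating set and that the notations match up is a faithful elaboration of the paper's one-line justification.
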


\begin{para}
\label{closureoperators}
The Galois connection of \ref{tame_Galois_connection} induces a closure operator (i.e. an idempotent monad) $\bbV\Theta_\G$ on $\Pow(\cS\Alg)_{\mathsf{tame}}$ whose fixed points are precisely the $\J$-ary varieties over $\cS$, by \ref{Jary_variety_fixedpoint}.  Hence, given a tame set $\calA$ of $\cS$-algebras, $\bbV\Theta_\G(\calA)$ is the smallest $\J$-ary variety over $\cS$ that contains $\calA$.  As a special case, similar conclusions are obtained with regard to the idempotent monad $\bbV\Theta$ on $\Pow(\cS\Alg)_{\mathsf{tame}}$ obtained via \ref{galois_cx_obc}, and we thus deduce that the monads $\bbV\Theta$ and $\bbV\Theta_\G$ are identical.  For each tame set $\calA$ of $\cS$-algebras, we call $\bbV\Theta_\G(\calA) = \bbV\Theta(\calA)$ the \textbf{$\J$-ary variety over $\cS$ generated by $\calA$}.  We may also consider the idempotent monad $\Theta_\G\bbV$ on $\Pow(\Eqn_\G(\cS))_{\mathsf{tame}}$, whose fixed points are the equational theories of \ref{defn_v_theta}.  Given a tame set $\calE$ of $\G$-parametrized $\cS$-equations, we call $\Theta_\G\bbV(\calE)$ \textbf{the equational theory over $\cS$ generated by $\calE$} (with parameters in $\G$), as it is the smallest equational theory over $\cS$ that contains $\calE$. \qed
\end{para}

\noindent If the generating set $\G$ of $\C_0$ is small, then we can show that \textit{every} set of parametrized $\cS$-equations is tame, and that \textit{every} set of $\cS$-algebras is tame.  We begin with the following observation:

\begin{lem}
\label{smallgeneratorbasiclem}
Suppose that $\G$ is small.  Then every set of $\G$-parametrized $\cS$-equations is small.
\end{lem}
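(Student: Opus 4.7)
The plan is straightforward: show that under the hypothesis that $\G$ is small, the ambient set $\Eqn_\G(\cS)$ is itself small, whence every subset (i.e., every set of $\G$-parametrized $\cS$-equations) is automatically small. There is no real technical obstacle; the argument is essentially a matter of unpacking the definition in \eqref{powerclasses} and invoking the background smallness assumptions.

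Concretely, I would recall from \eqref{powerclasses} the formula
\[ \Eqn_\G(\cS) \;=\; \coprod_{J \in \ob\J,\;G \in \G} \C_0\bigl(G, T_\cS J\bigr)^2, \]
and then verify that each of the three ingredients is small. First, the indexing family $\ob\J \times \G$ is small: $\J$ is a small subcategory of arities by the blanket assumption of \ref{subcategoryaritiesassumptions}, and $\G$ is small by hypothesis. Second, I would observe that each hom-set $\C_0(G, T_\cS J)$ is small, because the $\V$-category $\C$ is locally small. The latter follows from the local smallness of $\V$ (assumed in \ref{Vcategoryassumptions}) via the identification $\C_0(A,B) = \V_0(I,\C(A,B))$. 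Squaring gives that each summand $\C_0(G, T_\cS J)^2$ is small as well.

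Hence $\Eqn_\G(\cS)$ is a small-indexed disjoint union of small sets, and therefore small. Any set of $\G$-parametrized $\cS$-equations is by definition a subset of $\Eqn_\G(\cS)$, and so is small. The only point that might merit a sentence of explicit justification rather than being taken for granted is the passage from local smallness of $\V$ to local smallness of $\C_0$, but even this is immediate from the standard formula for the underlying ordinary category of a $\V$-category.
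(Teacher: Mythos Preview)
Your proposal is correct and follows exactly the same approach as the paper: show that $\Eqn_\G(\cS)$ is small because $\G$ is small, $\J$ is small, and $\C_0$ is locally small, whence every subset is small. The paper's proof is terser (one sentence), but your added justification for the local smallness of $\C_0$ via the local smallness of $\V$ is a reasonable elaboration.
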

\begin{proof}
$\Eqn_\G(\cS)$ is small since $\G$ is small, $\J$ is small, and $\C_0$ is locally small, and the result follows.
\end{proof}

\begin{prop}
\label{smallgeneratorlem}
Suppose that $\G$ is small. Then every set $\calE$ of parametrized $\cS$-equations is tame.
\end{prop}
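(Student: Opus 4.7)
The plan is straightforward: combine \ref{variety_equality} with \ref{smallgeneratorbasiclem} to replace an arbitrary set of parametrized $\cS$-equations by a small set of $\G$-parametrized $\cS$-equations that defines the same variety.

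Given an arbitrary set $\calE$ of parametrized $\cS$-equations, I would first form the associated set $[\calE]_\G$ of $\G$-parametrized $\cS$-equations as in \ref{E_G}. By \ref{variety_equality}, we have the equality $\Sat^*(\calE) = \Sat^*\bigl([\calE]_\G\bigr)$ in $\Pow(\cS\Alg)$. Next, since $\G$ is assumed small, \ref{smallgeneratorbasiclem} applies to tell us that \emph{every} set of $\G$-parametrized $\cS$-equations is small; in particular, $[\calE]_\G$ is a small set of $\G$-parametrized $\cS$-equations, and hence a small set of parametrized $\cS$-equations.

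Taking $\calF := [\calE]_\G$, we have therefore produced a small set $\calF$ of parametrized $\cS$-equations with $\Sat^*(\calE) = \Sat^*(\calF)$, which by Definition \ref{tame} says exactly that $\calE$ is tame. There is no real obstacle here: the proof is a two-line invocation of \ref{variety_equality} and \ref{smallgeneratorbasiclem}, and the only thing to check is the (immediate) observation that a $\G$-parametrized $\cS$-equation is a parametrized $\cS$-equation, so that smallness of $[\calE]_\G$ as a set of $\G$-parametrized equations transfers to smallness as a set of parametrized equations.
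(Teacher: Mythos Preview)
Your proof is correct and follows exactly the same approach as the paper: use \ref{variety_equality} to get $\Sat^*(\calE) = \Sat^*([\calE]_\G)$, then invoke \ref{smallgeneratorbasiclem} to conclude that $[\calE]_\G$ is small, so that $\calF := [\calE]_\G$ witnesses tameness of $\calE$.
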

\begin{proof}
By \ref{variety_equality}, $[\calE]_\G$ is a set of $\G$-parametrized $\cS$-equations with $\Sat^*(\calE) = \Sat^*\left([\calE]_\G\right)$, and $[\calE]_\G$ is small by \ref{smallgeneratorbasiclem}.
\end{proof}

\begin{lem}\label{tameness_reflection_lemma}
Let $\calA$ be a set of $\cS$-algebras, and suppose that $\Sat_*^\G(\calA)$ is a tame set of parametrized $\cS$-equations.  Then $\calA$ is tame.
\end{lem}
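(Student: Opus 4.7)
The plan is to reduce the question to the already-established tameness of $\J$-ary varieties by exploiting the standard Galois-connection identity $\Sat_*^\G \circ \Sat^* \circ \Sat_*^\G = \Sat_*^\G$, which holds for the Galois connection of \ref{Galoisconnectionprop}.

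First I would observe that $\Sat_*^\G(\calA)$, being a set of $\G$-parametrized $\cS$-equations, is in particular a set of parametrized $\cS$-equations, so the hypothesis that $\Sat_*^\G(\calA)$ is tame may be combined with \ref{rmk:var_charn_tame} to conclude that
\[
\calA' \;:=\; \Sat^*\bigl(\Sat_*^\G(\calA)\bigr)
\]
is a $\J$-ary variety over $\cS$. By \ref{Jary_variety_tame}, $\calA'$ is therefore a tame set of $\cS$-algebras, and hence by \ref{gtame_iff_tame} there is a small set $\calB$ of $\cS$-algebras with $\Sat_*^\G(\calA') = \Sat_*^\G(\calB)$.

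Next I would apply the Galois-connection identity to compute $\Sat_*^\G(\calA') = \Sat_*^\G\bigl(\Sat^*(\Sat_*^\G(\calA))\bigr) = \Sat_*^\G(\calA)$, which, combined with the previous step, yields $\Sat_*^\G(\calA) = \Sat_*^\G(\calB)$ for the small set $\calB$. A final invocation of \ref{gtame_iff_tame} then establishes that $\calA$ is tame.

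I do not anticipate any serious obstacle: every ingredient is already in place, and the only substantive use of the hypothesis is in promoting $\Sat^*(\Sat_*^\G(\calA))$ to a $\J$-ary variety via \ref{rmk:var_charn_tame}, after which the Galois formalism and \ref{gtame_iff_tame} handle the rest mechanically.
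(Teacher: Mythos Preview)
Your proof is correct and follows essentially the same route as the paper's. The only cosmetic difference is that where you invoke \ref{rmk:var_charn_tame} and then \ref{Jary_variety_tame} separately to conclude that $\Sat^*\bigl(\Sat_*^\G(\calA)\bigr)$ is tame, the paper packages these two steps into a single citation of \ref{tameprop2}; the remaining steps (the Galois identity $\Sat_*^\G\Sat^*\Sat_*^\G = \Sat_*^\G$ and the two applications of \ref{gtame_iff_tame}) are identical.
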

\begin{proof}
By \ref{tameprop2}, $\Sat^*\Sat_*^\G(\calA)$ is a tame set of $\cS$-algebras, so by \ref{gtame_iff_tame} there is a small set $\mathcal{B}$ of $\cS$-algebras such that $\Sat_*^\G\Sat^*\Sat_*^\G(\calA) = \Sat_*^\G(\mathcal{B})$.  But $\Sat_*^\G\Sat^*\Sat_*^\G(\calA) = \Sat_*^\G(\calA)$ by \ref{tame_Galois_connection} and a general property of Galois connections, and the result follows, by \ref{gtame_iff_tame}.
\end{proof}

\begin{prop}
\label{smallgeneratortame}
Suppose that $\G$ is small. Then every set $\calA$ of $\cS$-algebras is tame.
\end{prop}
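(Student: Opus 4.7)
The plan is to deduce this proposition as an immediate combination of the two preceding results \ref{smallgeneratorlem} and \ref{tameness_reflection_lemma}, since the hypothesis that $\G$ is small has already been fully exploited in \ref{smallgeneratorlem} (via \ref{smallgeneratorbasiclem}, which uses that $\Eqn_\G(\cS)$ is small whenever $\G$ and $\J$ are small and $\C_0$ is locally small).

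First I would observe that, by definition \eqref{eq:models_lowerstarg}, the set $\Sat_*^\G(\calA)$ is a subset of $\Eqn_\G(\cS) \subseteq \Eqn(\cS)$, so in particular it is a set of parametrized $\cS$-equations. Second, I would invoke \ref{smallgeneratorlem}, whose hypothesis that $\G$ is small is granted, to conclude that this set $\Sat_*^\G(\calA)$ is tame (as a set of parametrized $\cS$-equations). Third, I would apply \ref{tameness_reflection_lemma} directly to conclude that $\calA$ itself is tame.

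I do not anticipate any genuine obstacle here; the real content has already been established in the preceding lemmas. The only thing to be mildly careful about is that the two occurrences of the word \emph{tame} refer to slightly different notions (tame set of equations versus tame set of algebras), but this ambiguity is resolved precisely by \ref{tameness_reflection_lemma}, which is the bridge between them. In total, the proof should be a one- or two-sentence appeal to \ref{smallgeneratorlem} and \ref{tameness_reflection_lemma}, with no further calculation required.
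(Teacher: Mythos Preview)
Your proposal is correct and matches the paper's proof exactly: the paper also simply observes that $\Sat_*^\G(\calA)$ is tame by \ref{smallgeneratorlem} and then concludes via \ref{tameness_reflection_lemma}. Your additional remark that $\Sat_*^\G(\calA) \subseteq \Eqn_\G(\cS) \subseteq \Eqn(\cS)$ is a helpful clarification but adds nothing beyond what the paper's two-clause proof already assumes implicitly.
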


\begin{proof}
By \ref{smallgeneratorlem},
$\Sat_*^\G(\calA)$ is a tame set of parametrized $\cS$-equations, so $\calA$ is tame by \ref{tameness_reflection_lemma}.
\end{proof}

\begin{theo}\label{small_gen_gcx}
Suppose that $\C_0$ has a small generating set.  Then, for any (possibly large) generating set $\G$ for $\C_0$, the Galois connections of \ref{Galoisconnectionprop} and of \ref{tame_Galois_connection} are identical and so may be written as $\bbV \dashv \Theta_\G:\Pow(\cS\Alg)^\op \rightarrow \Pow(\Eqn_\G(\cS))$.  With this notation, the following are equivalent, for a (possibly large) set $\calA$ of $\cS$-algebras: (1) $\calA$ is a $\J$-ary variety over $\cS$; (2) $\calA = \bbV(\Theta_\G(\calA))$; (3) there is a (possibly large) set $\calE$ of $\G$-parametrized $\cS$-equations such that $\calA = \bbV(\calE)$.
\end{theo}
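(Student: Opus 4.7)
The plan is to exploit the fact that the notion of tameness introduced in \ref{tame} is defined intrinsically, without reference to the ambient generating set $\G$. So, fixing a small generating set $\G_0 \subseteq \ob\C$ for $\C_0$ (which exists by hypothesis), I will show that every set of parametrized $\cS$-equations is tame and that every set of $\cS$-algebras is tame. Once this is in hand, we have $\Pow(\Eqn_\G(\cS))_{\mathsf{tame}} = \Pow(\Eqn_\G(\cS))$ and $\Pow(\cS\Alg)_{\mathsf{tame}} = \Pow(\cS\Alg)$, so the Galois connections of \ref{Galoisconnectionprop} and \ref{tame_Galois_connection} are identical and may be written as $\bbV \dashv \Theta_\G$ on the full power sets.

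For the first claim, I would repeat the argument of \ref{smallgeneratorlem} with $\G_0$ in place of $\G$: given any set $\calE$ of parametrized $\cS$-equations, the set $[\calE]_{\G_0}$ is small (since $\Eqn_{\G_0}(\cS)$ is small by the reasoning of \ref{smallgeneratorbasiclem} applied to $\G_0$), and $\Sat^*(\calE) = \Sat^*\bigl([\calE]_{\G_0}\bigr)$ by the instance of \ref{variety_equality} corresponding to the generating set $\G_0$. Hence $\calE$ is tame. For the second claim, given any set $\calA$ of $\cS$-algebras, $\Sat_*^{\G_0}(\calA)$ is in particular a set of parametrized $\cS$-equations and so is tame by the first claim; therefore $\calA$ is tame by \ref{tameness_reflection_lemma} applied with $\G_0$. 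In all of these reduction steps, the relevant lemmas use the fixed generating set only via its being a generating set for $\C_0$, so their proofs transfer verbatim to $\G_0$.

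The equivalence of (1), (2), and (3) now follows immediately from \ref{Jary_variety_fixedpoint}, since in the present situation the tameness clauses appearing in its conditions (2) and (3) are automatic. There is no genuine obstacle in the proof: the only subtlety worth flagging is that the intrinsic character of tameness decouples it from the choice of $\G$, which allows the small generator $\G_0$ to be used to verify tameness even though the Galois connection of interest is indexed by the possibly much larger set $\G$.
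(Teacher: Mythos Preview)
Your proof is correct and follows essentially the same approach as the paper: both fix a small generating set (your $\G_0$, the paper's $\mathscr{H}$), use it to show via \ref{smallgeneratorlem} and \ref{smallgeneratortame} that all sets of equations and of algebras are tame, and then invoke \ref{Jary_variety_fixedpoint}. The only difference is that you spell out the transfer of the lemmas to $\G_0$ explicitly, while the paper simply cites them ``with respect to $\mathscr{H}$''; your remark that tameness is intrinsic (independent of the ambient $\G$) is exactly the point that makes this transfer legitimate.
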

\begin{proof}
By hypothesis, $\C_0$ has some small generating set $\mathscr{H}$, which need not coincide with $\G$. Nevertheless, we may apply \ref{smallgeneratorlem} and \ref{smallgeneratortame} with respect to $\mathscr{H}$ in order to deduce that every set of parametrized $\cS$-equations is tame and that every set of $\cS$-algebras is tame.  Hence, the result now follows from \ref{Jary_variety_fixedpoint}.
\end{proof}

\begin{cor}
Suppose that $\C_0$ has a small generating set.  Then there is a Galois connection $\bbV \dashv \Theta:\Pow(\cS\Alg)^\op \rightarrow \Pow(\Eqn(\cS))$ for which the fixed points of the associated idempotent monad $\bbV\Theta$ on $\Pow(\cS\Alg)$ are precisely the $\J$-ary varieties over $\cS$.
\end{cor}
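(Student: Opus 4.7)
The plan is to obtain this corollary as a direct instantiation of Theorem \ref{small_gen_gcx}, taking the (possibly large) generating set $\G$ to be $\ob\C$ itself. Note that $\ob\C$ is trivially a generating set for $\C_0$, since each representable functor $\C_0(C,-):\C_0 \to \Set$ with $C \in \ob\C$ is faithful on the morphisms out of $C$ and these are jointly faithful. All the background hypotheses of the section (in particular the completeness of $\C_0$ and the blanket assumptions of \ref{Vcategoryassumptions}) remain in force.

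Under this choice $\G = \ob\C$, the definition \ref{powerclasses} gives $\Eqn_{\ob\C}(\cS) = \Eqn(\cS)$, as already noted in \ref{class_vs_Jpres}, and the notation convention $\Theta := \Theta_{\ob\C}$ fixed in \ref{galois_cx_obc} then identifies the restricted operator $\Theta_\G$ of \ref{defn_v_theta} with $\Theta$. Therefore the Galois connection furnished by Theorem \ref{small_gen_gcx} specializes, for this choice of $\G$, to a Galois connection of the required shape
\[
\bbV \dashv \Theta\;:\;\Pow(\cS\Alg)^\op \longrightarrow \Pow(\Eqn(\cS))\;.
\]
No tameness restriction appears in the domain or codomain because Theorem \ref{small_gen_gcx} already observes that, in the presence of a small generating set, every set of parametrized $\cS$-equations and every set of $\cS$-algebras is tame (by \ref{smallgeneratorlem} and \ref{smallgeneratortame}), so the posets $\Pow(\Eqn(\cS))_{\mathsf{tame}}$ and $\Pow(\cS\Alg)_{\mathsf{tame}}$ coincide with $\Pow(\Eqn(\cS))$ and $\Pow(\cS\Alg)$ respectively.

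For the characterization of fixed points, I would invoke the equivalence (1) $\Leftrightarrow$ (2) from Theorem \ref{small_gen_gcx}: a (possibly large) set $\calA$ of $\cS$-algebras is a $\J$-ary variety over $\cS$ if and only if $\calA = \bbV(\Theta_\G(\calA))$, i.e., if and only if $\calA = \bbV\Theta(\calA)$, which is exactly the condition of being a fixed point of the idempotent monad $\bbV\Theta$ on $\Pow(\cS\Alg)$ induced by the Galois connection. There is no real obstacle here, since all the substantive work has been done in Theorem \ref{small_gen_gcx} and in the preparatory results \ref{Jary_variety_fixedpoint}, \ref{smallgeneratorlem}, and \ref{smallgeneratortame}; the present corollary is purely a matter of rephrasing the theorem for the distinguished choice $\G = \ob\C$ and translating between the notations $\Theta_{\ob\C}$ and $\Theta$.
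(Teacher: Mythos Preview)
Your proposal is correct and takes essentially the same approach as the paper, which proves the corollary in a single sentence by invoking Theorem \ref{small_gen_gcx} with $\G = \ob\C$. Your additional elaboration on why $\ob\C$ is a generating set, why the tameness qualifiers disappear, and how the fixed-point characterization follows from (1)$\Leftrightarrow$(2) is accurate and merely unpacks what the paper leaves implicit.
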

\begin{proof}
This follows from \ref{small_gen_gcx} in the case where $\G = \ob\C$.
\end{proof}

\begin{defn}\label{defn:gen}
Let $\calA$ be a tame set of $\cS$-algebras.  We say that a $\J$-ary variety $\calB$ over $\cS$ is \textbf{generated by} $\calA$ if $\calB = \bbV\Theta(\calA)$ (equivalently, if $\calB = \bbV\Theta_\G(\calA)$, \ref{closureoperators}).  As a special case, given an $\cS$-algebra $A$, we say that the $\J$-ary variety $\calB$ over $\cS$ is \textbf{generated by} $A$ if it is generated by $\{A\}$. \qed
\end{defn}

\noindent With this terminology, \ref{presentationtheoryprop} entails the following:

\begin{cor}\label{var_gen_jgen_free}
Every $\J$-ary variety $\calB = \ob(\cP\Alg)$ over $\cS$ is generated by the set $\{T_\cP J \mid J \in \ob\J\}$ consisting of the free $\cP$-algebras $T_\cP J$ on objects $J$ of $\J$. \qed
\end{cor}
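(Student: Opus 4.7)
The plan is to show that if $\calB = \ob(\cP\Alg)$ is a $\J$-ary variety over $\cS$ and $\calA = \{T_\cP J \mid J \in \ob\J\}$, then $\calB = \bbV\Theta_\G(\calA)$, which by \ref{defn:gen} is exactly what it means for $\calB$ to be generated by $\calA$.

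First, I would observe that $\calA$ is small, since $\J$ is small, so $\calA$ is tame (taking $\mathcal{B} = \calA$ in \ref{tame}), ensuring that $\Theta_\G(\calA) = \Sat_*^\G(\calA)$ is a meaningful input to the Galois connection of \ref{tame_Galois_connection}. Likewise, $\calB$ is a $\J$-ary variety over $\cS$ and hence tame by \ref{Jary_variety_tame}.

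Next, the crux of the argument is simply to combine \ref{presentationtheoryprop} with the fact that $\J$-ary varieties are fixed points of the closure operator $\bbV\Theta_\G$. Explicitly, \ref{presentationtheoryprop} gives
\[
\Theta_\G(\calB) = \Sat_*^\G(\ob(\cP\Alg)) = \Sat_*^\G(\calA) = \Theta_\G(\calA).
\]
Applying $\bbV$ to both sides yields $\bbV\Theta_\G(\calB) = \bbV\Theta_\G(\calA)$. By \ref{Jary_variety_fixedpoint}, since $\calB$ is a $\J$-ary variety over $\cS$, we have $\calB = \bbV\Theta_\G(\calB)$, and so $\calB = \bbV\Theta_\G(\calA)$, as required.

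There is no real obstacle here; the only mild subtlety is checking that the equality $\bbV\Theta_\G(\calA) = \bbV\Theta(\calA)$ discussed in \ref{closureoperators} makes the two possible readings of ``generated by'' coincide, so that the statement is unambiguous. Once \ref{presentationtheoryprop} is in hand, the corollary is essentially an immediate consequence of the Galois connection machinery.
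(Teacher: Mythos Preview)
Your proof is correct and follows exactly the route the paper intends: the paper states the corollary as an immediate consequence of \ref{presentationtheoryprop}, and you have simply made explicit the Galois-connection argument (tameness checks, applying $\bbV$ to both sides, and invoking the fixed-point characterization \ref{Jary_variety_fixedpoint}) that the paper leaves to the reader.
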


\noindent We shall conclude this section with some examples of $\J$-ary varieties generated by a single $\cS$-algebra, for which we shall need the following results:

\begin{lem}\label{cl_prop_var}
Let $\cP$ be a free-form $\J$-presentation over $\cS$.  Then (1) $\cP\Alg$ is closed under (weighted) limits in $\cS\Alg$, and (2) $\cP\Alg$ is closed under subobjects in $\cS\Alg$, in the sense that if $m:A \rightarrow B$ is a monomorphism in $\cS\Alg$ and $B$ is a $\cP$-algebra, then $A$ is a $\cP$-algebra.
\end{lem}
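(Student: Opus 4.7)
The plan is to exploit the $\V$-naturality of the parametrized operations $\omega_\delta, \nu_\delta : \C(J_\delta, U^\cS-) \rightrightarrows \C(C_\delta, U^\cS-)$ that comprise the defining diagrammatic $\cS$-equations of $\cP = (\cS,\sfE)$. In both parts, the strategy will be to use $\V$-naturality of $\omega_\delta$ and $\nu_\delta$ at a suitable morphism in $\cS\Alg$ to transfer the equation $\omega_{\delta,?} = \nu_{\delta,?}$ from a $\cP$-algebra to a nearby $\cS$-algebra.

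For (1), given a weighted limit $L = \{W,D\}$ in $\cS\Alg$ of a diagram $D : \D \to \cS\Alg$ whose objects are $\cP$-algebras, I would first recall that $U^\cS : \cS\Alg \to \C$ is strictly $\J$-monadic (\ref{salg_jmonadic}) and hence creates all limits. Because the representable $\V$-functors $\C(J_\delta,-)$ and $\C(C_\delta,-)$ on $\C$ preserve limits, I can then apply them levelwise to the limit cone and use the $\V$-naturality squares of $\omega_\delta, \nu_\delta$ at each projection $p_d : L \to Dd$ to deduce that $\C(C_\delta,p_d) \circ \omega_{\delta,L} = \omega_{\delta,Dd} \circ \C(J_\delta,p_d) = \nu_{\delta,Dd} \circ \C(J_\delta,p_d) = \C(C_\delta,p_d) \circ \nu_{\delta,L}$ for each $d$. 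Since $\C(C_\delta,-)$ sends the limit $L = \{W,U^\cS D\}$ in $\C$ to the limit $\{W,\C(C_\delta, U^\cS D-)\}$ in $\V$, the family $\bigl(\C(C_\delta,p_d)\bigr)_d$ is jointly monic as a cone over this limit in $\V$; hence $\omega_{\delta,L} = \nu_{\delta,L}$, showing that $L$ is a $\cP$-algebra.

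For (2), I would start from a monomorphism $m : A \to B$ in $\cS\Alg$ with $B$ a $\cP$-algebra. By $\V$-naturality of $\omega_\delta$ and $\nu_\delta$ at $m$, I obtain
\[
\C(C_\delta,m) \circ \omega_{\delta,A} \;=\; \omega_{\delta,B} \circ \C(J_\delta,m) \;=\; \nu_{\delta,B} \circ \C(J_\delta,m) \;=\; \C(C_\delta,m) \circ \nu_{\delta,A},
\]
where the middle equality uses that $B$ satisfies the equation $\omega_\delta \doteq \nu_\delta$. It then suffices to show that $\C(C_\delta,m)$ is a monomorphism in $\V$: because $U^\cS$ is monadic it preserves limits and hence preserves monomorphisms (via their pullback characterization), so $U^\cS m$ is mono in $\C$, and then the representable $\V$-functor $\C(C_\delta,-) : \C \to \V$ preserves monomorphisms for the same reason. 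Cancelling $\C(C_\delta,m)$ yields $\omega_{\delta,A} = \nu_{\delta,A}$, so $A$ is a $\cP$-algebra.

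The only potential subtlety is the joint-monic argument in (1): one must be careful that the $\V$-naturality squares at all projections (not just the conical ones) collectively force the two candidate morphisms into $\C(C_\delta,L)$ to coincide. This is handled cleanly by noting that $\C(C_\delta,-)$ preserves the weighted limit and that any two morphisms $Y \rightrightarrows \{W,\C(C_\delta,U^\cS D-)\}$ inducing the same cone must be equal by the universal property.
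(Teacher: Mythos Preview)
Your proof is correct. Part (2) is essentially the same as the paper's: the paper also argues via naturality (phrased in terms of $\llb t\rrb$ and $\llb u\rrb$, but by \ref{bijection_diag_ff} this is the same data as your $\omega_\delta,\nu_\delta$), leaving the reader to supply the easy cancellation step that you spell out.

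Part (1), however, is genuinely different from the paper's argument. The paper does not verify the equations at the limit directly; instead it observes that both $U^\cS:\cS\Alg \to \C$ and $U^\cP:\cP\Alg \to \C$ are strictly monadic (by \ref{salg_jmonadic} and \ref{vmnd_pres_by_ff_jpres}) and hence both create all limits, so a limit in $\cS\Alg$ of a diagram of $\cP$-algebras must coincide with its limit in $\cP\Alg$ and therefore lies in $\cP\Alg$. Your approach is more elementary and self-contained: it avoids invoking the monadicity of $U^\cP$ (which is a substantial theorem in this paper) and instead uses only that $U^\cS$ creates limits together with limit-preservation of the representables $\C(J_\delta,-)$, $\C(C_\delta,-)$ and the $\V$-naturality of the diagrammatic equations. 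The paper's route is shorter once one has \ref{vmnd_pres_by_ff_jpres} in hand, but yours makes the closure under limits visibly independent of that result and would work for any $\V$-natural parallel pair, not just those arising from a $\J$-presentation. Your caveat about weighted limits is well placed: the ``projection'' language is heuristic, and the clean statement is exactly the one you give at the end, that $\C(C_\delta,-)$ preserves the weighted limit and hence detects equality of maps into $\C(C_\delta,U^\cS L)$.
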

\begin{proof}
(1) follows from the fact that both $U^\cS:\cS\Alg \rightarrow \C$ and $U^\cP:\cP\Alg \rightarrow \C$ are strictly monadic $\V$-functors (by \ref{salg_jmonadic} and \ref{vmnd_pres_by_ff_jpres}) and so preserve and create all limits.  If $m:A \rightarrow B$ is a monomorphism in $\cS\Alg$ and $B$ satisfies a given parametrized $\cS$-equation $t \doteq u$, then it is straightforward to verify that $A$ satisfies $t \doteq u$, using the naturality of $\llb t\rrb$ and $\llb u\rrb$.  (2) now follows.
\end{proof}

\begin{prop}\label{sufficient_cond_gen_var}
Let $\cP$ be a free-form $\J$-presentation over $\cS$, and let $\calA$ be a tame set of $\cP$-algebras.  Suppose that for each $J \in \ob\J$, the free $\cP$-algebra $T_\cP J$ is a subobject of some (weighted) limit, in $\cS\Alg$, of objects in $\calA$.  Then $
\calA$ generates the $\J$-ary variety $\ob(\cP\Alg)$ over $\cS$.
\end{prop}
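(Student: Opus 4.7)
The plan is to show the two inclusions $\bbV\Theta(\calA) \subseteq \ob(\cP\Alg)$ and $\ob(\cP\Alg) \subseteq \bbV\Theta(\calA)$ separately, using that $\bbV\Theta(\calA)$ is itself a $\J$-ary variety over $\cS$ (hence a fixed point of $\bbV\Theta$) and that any such variety enjoys the closure properties catalogued in \ref{cl_prop_var}. The first inclusion is immediate: since $\calA \subseteq \ob(\cP\Alg)$ and $\ob(\cP\Alg)$ is a fixed point of the closure operator $\bbV\Theta$ (by \ref{Jary_variety_fixedpoint} or \ref{closureoperators}), monotonicity of $\bbV\Theta$ gives $\bbV\Theta(\calA) \subseteq \bbV\Theta(\ob(\cP\Alg)) = \ob(\cP\Alg)$.

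For the reverse inclusion, set $\calB := \bbV\Theta(\calA)$ and write $\calB = \ob(\cQ\Alg)$ for some free-form $\J$-presentation $\cQ$ over $\cS$ (such a $\cQ$ exists by the definition of $\J$-ary variety over $\cS$). Then by \ref{cl_prop_var}, $\cQ\Alg$ is closed in $\cS\Alg$ under both (weighted) limits and subobjects, and of course $\calA \subseteq \calB$. By hypothesis, for each $J \in \ob\J$ the free $\cP$-algebra $T_\cP J$ is a subobject in $\cS\Alg$ of some weighted limit of objects drawn from $\calA$, so this weighted limit lies in $\calB$ by the closure of $\calB$ under weighted limits, and hence $T_\cP J$ itself lies in $\calB$ by the closure of $\calB$ under subobjects. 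Thus $\{T_\cP J \mid J \in \ob\J\} \subseteq \calB$.

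Now invoke \ref{var_gen_jgen_free}, which asserts that $\ob(\cP\Alg)$ is generated by $\{T_\cP J \mid J \in \ob\J\}$; that is, $\ob(\cP\Alg) = \bbV\Theta(\{T_\cP J \mid J \in \ob\J\})$. By the monotonicity and idempotence of the closure operator $\bbV\Theta$, together with the fact that $\calB$ is a fixed point, we obtain
\[
\ob(\cP\Alg) \;=\; \bbV\Theta(\{T_\cP J \mid J \in \ob\J\}) \;\subseteq\; \bbV\Theta(\calB) \;=\; \calB \;=\; \bbV\Theta(\calA).
\]
Combined with the first inclusion, this yields $\bbV\Theta(\calA) = \ob(\cP\Alg)$, i.e.\ $\calA$ generates the $\J$-ary variety $\ob(\cP\Alg)$ in the sense of \ref{defn:gen}. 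The only technical point to verify is that the formation of ``the $\J$-ary variety generated by $\calA$'' is well-posed, which is guaranteed by the tameness hypothesis on $\calA$, and that the application of \ref{cl_prop_var}(2) is legitimate when $T_\cP J$ is recognized as a subobject inside $\cS\Alg$ (not merely in $\C$); this is built into the hypothesis, so no further argument is required.
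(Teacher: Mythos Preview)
Your proof is correct and follows essentially the same approach as the paper's own proof: define $\calB = \bbV\Theta(\calA)$, obtain $\calB \subseteq \ob(\cP\Alg)$ by minimality, use the closure properties of \ref{cl_prop_var} to place each $T_\cP J$ in $\calB$, and then invoke \ref{var_gen_jgen_free} to obtain the reverse inclusion. The only difference is cosmetic: you explicitly name a presentation $\cQ$ for $\calB$ and spell out the closure-operator manipulations, whereas the paper's version leaves these implicit.
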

\begin{proof}
Let $\calB$ be the $\J$-ary variety over $\cS$ generated by $\calA$.  Then $\calB \subseteq \ob(\cP\Alg)$, since $\calA$ is a subset of the $\J$-ary variety $\ob(\cP\Alg)$ over $\cS$.  By \ref{cl_prop_var}, $\calB$ is closed in $\cS\Alg$ under limits and subobjects.  Hence, our hypothesis entails that $\{T_\cP J \mid J \in \ob\J\} \subseteq \calB$, so by \ref{var_gen_jgen_free} we deduce that $\ob(\cP\Alg) \subseteq \calB$.
\end{proof}

\begin{egg}
Let $R$ be an internal rig in a complete and cocomplete cartesian closed category $\V$, and let $\cP = (\cS,\sfE)$ be the diagrammatic (or, equivalently, free-form) $\SF(\V)$-presentation of internal (left) $R$-modules discussed in \ref{Rmodules}, so that $\cP\Alg = R\Mod$.  Given a finite cardinal $n$, let us write also $n$ to denote the $n$th copower of $1$ in $\V$ and note that the free internal $R$-module on $n$ is the (conical) power $R^n$ in $R\Mod$ (e.g. by \cite[6.4.5]{functional}), where $R$ itself is regarded as an $R$-module by multiplication on the left.  Hence, the single $R$-module $R$ generates the $\SF(\V)$-ary variety $\ob(R\Mod)$ over $\cS$, by \ref{sufficient_cond_gen_var}. \qed
\end{egg}

\begin{egg}
Again let $R$ be an internal rig in a complete and cocomplete cartesian closed category $\V$, but this time let $\cP = (\cS,\sfE)$ be the diagrammatic $\SF(\V)$-presentation of (internal left) $R$-affine spaces discussed in \ref{internalaffinespaces}, so that $\cP\Alg = R\text{-}\mathsf{Aff}$.  Let us regard $R$ itself as a left $R$-affine space (taking $R$-affine combinations in $R$ in the usual way, \cite[8.6]{functional}).  Given a finite cardinal $n$, the free $R$-affine space on $n$ (or rather, on the $n$th copower of $1$ in $\V$) is the subobject $R^{n,\aff} \hookrightarrow R^n$ in $R\text{-}\mathsf{Aff}$ of $n$-tuples with sum $1$, by \cite[\S 8]{functional} (or by \ref{internalaffinespaces}).  Hence, the single $R$-affine space $R$ generates the $\SF(\V)$-ary variety $\ob(R\text{-}\mathsf{Aff})$ over $\cS$, by \ref{sufficient_cond_gen_var}.\qed
\end{egg}

\noindent Noting that \ref{cl_prop_var} involves closure properties for $\J$-ary varieties over $\cS$ that are related to the Birkhoff variety theorem \cite{Birkhoff:Var}, it is natural to ask whether a generalization of the latter theorem can be proved in the present general setting, perhaps under additional hypotheses; we leave this question for future work.

\bibliographystyle{amsplain}
\bibliography{mybib}

\end{document}